\newcommand{\argmin}{\operatorname{argmin}}
\newcommand{\argmax}{\operatorname{argmax}}
\newcommand{\Rr}{{\mathbb{R}}}
\newcommand{\Tt}{{\mathbb{T}}}
\newcommand{\Hh}{{\overline{H}}}
\newcommand{\Aa}{{\mathcal{A}}}
\newcommand{\epsi}{\varepsilon}
\def\leq{\leqslant}
\def\geq{\geqslant}
\numberwithin{equation}{section}
\newtheoremstyle{thmlemcorr}{10pt}{10pt}{\itshape}{}{\bfseries}{.}{10pt}{{\thmname{#1}\thmnumber{
                        #2}\thmnote{ (#3)}}}
\newtheoremstyle{thmlemcorr*}{10pt}{10pt}{\itshape}{}{\bfseries}{.}\newline{{\thmname{#1}\thmnumber{
\newtheoremstyle{defi}{10pt}{10pt}{\itshape}{}{\bfseries}{.}{10pt}{{\thmname{#1}\thmnumber{
                        #2}\thmnote{ (#3)}}}
\newtheoremstyle{remexample}{10pt}{10pt}{}{}{\bfseries}{.}{10pt}{{\thmname{#1}\thmnumber{
                        #2}\thmnote{ (#3)}}}
\newtheoremstyle{ass}{10pt}{10pt}{}{}{\bfseries}{.}{10pt}{{\thmname{#1}\thmnumber{
                        A#2}\thmnote{ (#3)}}}
\theoremstyle{thmlemcorr}
\newtheorem{theorem}{Theorem}
\numberwithin{theorem}{section}
\newtheorem{lemma}[theorem]{Lemma}
\newtheorem{proposition}[theorem]{Proposition}
\theoremstyle{thmlemcorr*}
\newtheorem{theorem*}{Theorem}
\newtheorem{lemma*}[theorem]{Lemma}
\newtheorem{corollary*}[theorem]{Corollary}
\newtheorem{proposition*}[theorem]{Proposition}
\newtheorem{problem*}[theorem]{Problem}
\newtheorem{conjecture*}[theorem]{Conjecture}
\theoremstyle{defi}
\theoremstyle{remexample}
\newtheorem{remark}[theorem]{Remark}
\newtheorem{pro}[theorem]{Proposition}
\theoremstyle{ass}
\begin{document}

        \title[One-dimensional, first-order, stationary
mean-field games]{One-dimensional stationary
        mean-field games with local coupling}
        
        \author{Diogo A. Gomes}
        \address[D. A. Gomes]{
                King Abdullah University of Science and Technology (KAUST), CEMSE Division and  
                KAUST SRI, Uncertainty Quantification Center in Computational Science and Engineering, Thuwal 23955-6900, Saudi Arabia.}
        \email{diogo.gomes@kaust.edu.sa}
        \author{Levon Nurbekyan}
        \address[L. Nurbekyan]{
                King Abdullah University of Science and Technology (KAUST), CEMSE Division, Thuwal 23955-6900, Saudi Arabia. }
        \email{levon.nurbekyan@kaust.edu.sa}
        \author{Mariana Prazeres}
        \address[M. Prazeres]{
                King Abdullah University of Science and Technology (KAUST), CEMSE Division, Thuwal 23955-6900, Saudi Arabia. }
        \email{mariana.prazeres.1@kaust.edu.sa}

        \keywords{Mean-field games; stationary problems; dynamic games}
        \subjclass[2010]{
                35J47, 
                35A01} 
        
        \thanks{
                D. Gomes, L. Nurbekyan and M. Prazeres were partially supported by KAUST baseline and start-up funds.
        }
        \date{\today}

        \begin{abstract}
                A standard assumption in mean-field game (MFG) theory is that the coupling between the Hamilton-Jacobi equation and the transport equation is monotonically non-decreasing in the density of the population. In many cases, this assumption implies the existence and uniqueness of solutions. Here, we drop that assumption and construct explicit solutions for one-dimensional MFGs. These solutions exhibit phenomena  not present in monotonically increasing MFGs:  low-regularity, non-uniqueness, and the formation of regions with no agents.
        \end{abstract}
        
        \maketitle

        \section{Introduction}
        
        Mean-field game (MFG) theory \cite{cardaliaguet, GPV,   GS, llg2} describes on-cooperative differential games with infinitely many identical players. These games were introduced  by Lasry and Lions
\cite{ll1, ll2, ll3}  and, independently around the same time, by
Huang, Caines and Malham\'{e} \cite{Caines2, Caines1}. Often, MFGs are given by a Hamilton-Jacobi equation coupled with a Fokker-Planck equation. A standard example is the 
        stationary, one-dimensional, first-order MFG:         \begin{equation}\label{main}
       \begin{cases}
        \frac{(u_x+p)^2}{2}+V(x)=g(m)+\Hh,\\
        -(m(u_x+p))_x =0,  
        \end{cases}
        \end{equation}
with its elliptic regularization,
   \begin{equation}\label{maine}
   \begin{cases}
   -\epsilon u_{xx}+\frac{(p+u_x)^2}{2}+V(x)=\Hh+ g(m)\\
   -\epsilon m_{xx}-((p+u_x) m)_x=0, 
   \end{cases}
   \end{equation}
where $\epsilon>0$.        
Here, $p$ is a fixed real number and the unknowns are the constant $\Hh$ and the functions $u$ and $m$.  The function $g$ is $C^\infty$ on $\Rr^+$. To simplify the presentation, we consider the periodic case and work in the one-dimensional torus, $\Tt$. Accordingly,  $V : \Tt \rightarrow \Rr$ is a $C^\infty$ potential. We search for periodic solutions, $u,m :\Tt \rightarrow \Rr$.
Here, we examine this problem and
attempt to understand its features in terms of
the monotonicity properties of  $g$.

A standard assumption in MFGs is that $g$  is increasing. Heuristically, this assumption means that agents prefer sparsely populated areas. In this case, the existence and uniqueness of smooth solutions to \eqref{main} is well understood for stationary problems \cite{GPM1, GR, GM, PV15}, weakly coupled MFG systems \cite{GPat2}, the obstacle MFG problem \cite{GPat} and extended MFGs \cite{GPatVrt}. In the time-dependent setting, similar results are obtained in \cite{GPim2, GPim1, GVrt} for standard MFGs and in \cite{GP13, GP2} for forward-forward problems. The theory of weak solutions is also well developed for first-order and second-order problems (see \cite{Cd1, Cd2,MR3358627} and \cite{cgbt, FG2, porretta, porretta2}, respectively). Congestion problems, see \cite{GMit,  GVrt2, Graber2}, are also of interest and our results extend straightforwardly  \cite{GNP2}.

The case of a non-monotonically increasing $g$ is relevant: if $g$\ is decreasing, agents prefer clustering in high-density areas.  
The case where $g$ first decreases and then increases is also natural; here, agents have a preferred density given by the minimum of $g$.
However, little is known about the properties of \eqref{main}
when $g$ is not increasing. One of the few known cases is a second-order MFG with $g(m)=-\ln m$ and a quadratic cost. In this case,  due to the particular structure of the equations, there are explicit solutions, see \cite{gueant3, llg2}.

A triplet, $(u,m,\Hh),$ solves \eqref{main} if
        \begin{itemize}
                \item[i.] $u$ is a Lipschitz viscosity solution of the first equation in \eqref{main};
                \item[ii.] $m$ is a probability density; that is,        \begin{equation*}
                m\geq 0,\quad \int\limits_{\Tt} m =1;
                \end{equation*}
                \item[iii.] $m$ is a weak (distributional) solution of the second equation in \eqref{main}.
        \end{itemize}
        Because \eqref{main} is invariant under addition of constants to $u$, we assume that $u(0)=0$.
Here, $u$\ is Lipschitz continuous. However, $m$\ can be discontinuous. In this case, viscosity solutions of the first equation in         
         \eqref{main} are interpreted as discontinuous viscosity solutions; see, for example, \cite{Barlesbook} and the discussion in  Section \ref{discsec}.

Our problem is one-dimensional and the Hamiltonian is convex. If $u$ is a piecewise $C^1$ function and $m$\ is continuous, then  $u$\ is a viscosity solution if
the following conditions hold:\begin{itemize}
\item[a.] $u$ solves the equation at the points where it is $C^1$ and $m$ is continuous;
\item[b.] $\lim\limits_{x\to x_0^-}u_x(x) \geq \lim\limits_{x\to x_0^+}u_x(x)$ at points of discontinuity of $u_x$. 
\end{itemize}

When $g$ is not increasing, \eqref{main} may not admit  $m$ continuous. Solutions must, therefore,  be considered in the framework of discontinuous viscosity solutions. In this case, the above characterization of one-dimensional viscosity solutions is not valid, and \eqref{main} admits a large family of discontinuous viscosity solutions (see Section \ref{discsec}). On the other hand,  solutions that  satisfy the above conditions (a. and b.) have nice structural properties that we discuss in this paper. Furthermore, in their analysis we see the appearance of discontinuities in $m$, which in turn motivates the study of discontinuous viscosity solutions. Overall, these conditions seem to be good selection criteria for discontinuous solutions of \eqref{main}.

We call solutions that satisfy conditions a. and b. \textit{regular} (they can still be discontinuous). In this paper, we always consider regular solutions except in Section \ref{discsec}, where we discuss general discontinuous viscosity solutions. Furthermore, when $m$ is continuous the term ``regular" is superfluous. Thus, except in Section \ref{discsec},  we refer to  regular solutions.

Our goal is to solve \eqref{main} explicitly and to understand the qualitative behavior of solutions. For that, in Section \ref{cfor},  we reformulate \eqref{main} in terms of the \textit{current,}
        \begin{equation}\label{eq: j}
        j=m(u_x+p).
        \end{equation}
From the second equation in \eqref{main}, $j$ is constant. Thus, the current becomes the main parameter in our analysis.

While  we focus our attention into  non-increasing MFGs, our methods are also valid for increasing MFGs.
To illustrate and contrast these two cases, we begin our analysis in Section \ref{sec: g_increasing} by addressing the latter. For $j>0$, we show the
existence of a unique smooth solution. However, for $j=0$,  we uncover new phenomena: the
existence of non-smooth solutions and the lack of uniqueness. 

In Section \ref{sec:monelliptic}, we consider the elliptic regularization of monotone MFGs. We establish a new variational principle that gives the existence and uniqueness of smooth solutions. Moreover, we address the vanishing viscosity problem using  $\Gamma$-convergence.

In Section \ref{sec: g_decreasing}, we study regular solutions of \eqref{main} for non-increasing $g$.  In this case, if $j\neq0$, $m>0$. However, for certain values of $j$,  \eqref{main} does not have continuous solutions. In contrast, if  $j$ is large enough, \eqref{main}  has a unique smooth solution. Moreover, if $V$ has a single point of maximum, there exists a unique solution of \eqref{main} for each $j>0$. If $V$ has multiple maxima, there are  multiple solutions. If $j=0$, the behavior of \eqref{main} is more complex and  $m$ can be discontinuous or  vanish.

Next, in Section \ref{discsec}, we consider MFGs with a decreasing nonlinearity, $g$, and discuss the properties of discontinuous viscosity solutions.  

Subsequently, in Section \ref{sec:amonotoneelliptic}, we study the elliptic regularization of anti-monotone MFGs. There, we use calculus of variations methods to prove the existence of a solution.

In Section \ref{sec: regmodes}, we examine the regularity of solutions as a function of the current and,  in Section \ref{sec: asymptotics}, we study the asymptotic behavior of solutions of \eqref{main} as  $j$ converges to $0$ and $\infty$. 
Finally, in Sections \ref{sec: j} and \ref{sec: p}, we analyze the regularity of $\Hh$ in terms of $j$ and $p$.

\section{The current formulation and regularization} 
\label{cfor}

Here, we discuss the current formulation of \eqref{main} and \eqref{maine}. After some elementary computations, we show that the current formulation of \eqref{maine} is the Euler-Lagrange equation of a suitable functional.
 
\subsection{Current formulation} 
 
Let $j$ be given by \eqref{eq: j}. From the second equation in \eqref{main},
$j$ is constant.
We split our analysis into the cases, $j\neq 0$ and $j=0$.

If $j\neq 0$,  $m(x) \neq 0$ for all $x \in \Tt$ and $u_x+p=j/m$.
        Thus, \eqref{main} can be written as
        \begin{equation}\label{eq: currentform}
        \begin{cases}
       F_j(m)=\Hh-V(x),\\
        m>0,\ \int\limits_{\Tt} m dx=1,\\
        \int\limits_{\Tt} \frac{1}{m} dx=\frac{p}{j}, 
        \end{cases}
        \end{equation}
 where $F_j(m)= \frac{j^2}{2m^2}-g(m)$. 
For each $x$, the first equation in \eqref{eq: currentform} is an algebraic equation for $m$. If $g$ is increasing, for each $x \in \Tt$ and $\Hh \in \Rr,$ there exists a unique solution. In contrast, if $g$ is not increasing, there may exist multiple solutions, as we discuss later. 
        
For $j=0,$  \eqref{main} gives
        \begin{equation}\label{eq: currentform_j=0_intro}
        \begin{cases}
        \frac{(u_x+p)^2}{2}-g(m)=\Hh-V(x),\\
        m \geq 0,\ \int\limits_{\Tt} m dx=1,\\
        m(u_x+p)=0.
        \end{cases}
        \end{equation}        
       From the last equation in \eqref{eq: currentform_j=0_intro}, either $m=0,$ in which case $u$ solves
       \[\frac{(u_x+p)^2}{2}-g(0)=\Hh-V(x),
       \]
       or $m>0$ and $g(m)+\Hh-V(x)=0$. Hence, if $g$ is increasing or decreasing, $m(x)$ is determined in a unique way; otherwise, multiple solutions can occur. 

\subsection{Elliptic regularization}

  Now, we consider the elliptic MFG \eqref{maine}.
From the second equation in that system, we conclude that 
 \[
 j=\epsilon m_x+m (p+u_x)
 \]  
 is constant.
 Thus, we solve for $u_x$\ and replace it in the first equation. Accordingly, we get
 \begin{equation}
 \label{monly}
 -\epsilon \left(\frac{j-\epsilon m_x}{m}\right)_{x}+\frac{(j-\epsilon m_x)^2}{2
m^2}+V(x)=\Hh+ g(m).
 \end{equation}
Then, using the identity
\[
\epsilon \frac{(j-\epsilon m_x) m_x}{m^2}+\frac{(j-\epsilon
m_x)^2}{2
m^2}=\frac{j^2-\epsilon^2 m_x^2}{2m^2}, 
\]         
we obtain the following equation for $m$:
\begin{equation}
\label{efm}
\epsilon^2 \frac{m_{xx}}{m}  - \epsilon^2 \frac{m_x^2}{2 m^2}+F_j(m)=\Hh-V(x).
\end{equation}
Now, let $\Phi_j$ be such that $\Phi_j'(m)=F_j(m)$; that is, 
\[
\Phi_j(m)=-\frac{j^2}{2 m}-G(m), 
\]
where $G'(m)=g(m)$. 
Then, \eqref{efm} is the Euler-Lagrange equation of the functional
\begin{equation}
\label{mainef}
\int_{\Tt} \epsilon^2 \frac{m_x^2}{2 m}-\Phi_j(m)-V(x) m \ dx\\ 
\end{equation}
under the constraint $\int_{\Tt} m=1$; the constant $\Hh$ is the Lagrangian multiplier for the preceding constraint.

\section{First-order monotone MFGs}\label{sec: g_increasing}
        
We continue our analysis by considering monotonically increasing nonlinearities, $g$. In the case of a non-vanishing current, solutions are smooth. However, if the current vanishes,  solutions can fail to be smooth, $m$ can vanish, and $u$ may not be unique. 

The non-smooth behavior for a generic non-decreasing nonlinearity, $g$, was  observed in Theorem 2.8 in \cite{ll3} where the authors find limits of smooth solutions of second-order MFGs as the viscosity coefficient converges to 0.     
         
\subsection{$j\neq 0$, $g$ \bf increasing}
        
        Here, in contrast to the case $j=0$, examined later, the solutions are smooth. Elementary computations give the following result.
        
        \begin{proposition}\label{prp:j<>0g_increasing}
                Let $g$ be monotonically increasing. Then, for every $j>0$, \eqref{main} has a unique smooth solution, $(u_j,m_j,\Hh_j),$ with current $j$. This solution is given by
                \[
                m_j(x)= F_j^{-1}(\Hh_j-V(x)),\quad u_j(x)=\int\limits_{0}^{x} \frac{j}{m_j(y)}dy-p_j x,
                \] 
                where $p_j=\int\limits_{\Tt} \frac{j}{m_j(y)}dy,\ F_j(t)=\frac{j^2}{2t^2}-g(t),$ and $\Hh_j$ is such that $\int\limits_{\Tt} m_j(x)dx=1$.
        \end{proposition}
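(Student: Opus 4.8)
The plan is to solve the current formulation \eqref{eq: currentform} explicitly: for $j\neq 0$ it decouples into a pointwise algebraic equation for $m$ together with the mass normalization, and once $m$ is found, $u$ is recovered by integration. First I would record the elementary properties of the scalar map $F_j(t)=\frac{j^2}{2t^2}-g(t)$ on $(0,\infty)$. Since $g$ is increasing and $j\neq 0$, we have $F_j'(t)=-\frac{j^2}{t^3}-g'(t)<0$, so $F_j$ is a strictly decreasing $C^\infty$ bijection of $(0,\infty)$ onto $(a_j,+\infty)$, where $a_j:=\lim_{t\to\infty}F_j(t)\in[-\infty,+\infty)$ and $\lim_{t\to 0^+}F_j(t)=+\infty$. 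By the inverse function theorem $F_j^{-1}$ is smooth, so whenever $\Hh-V(x)>a_j$ the equation $F_j(m)=\Hh-V(x)$ has a unique positive solution $m=F_j^{-1}(\Hh-V(x))$, which inherits the regularity of $V$.

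Second, I would fix the constant $\Hh_j$ from the constraint $\int_\Tt m=1$. For $\Hh>a_j+\max_\Tt V$ set $M(\Hh):=\int_\Tt F_j^{-1}(\Hh-V(x))\,dx$. As $F_j^{-1}$ is decreasing, $M$ is continuous and strictly decreasing in $\Hh$; moreover $M(\Hh)\to 0$ as $\Hh\to+\infty$ (since $F_j^{-1}(s)\to 0$ as $s\to+\infty$), while $M(\Hh)\to+\infty$ as $\Hh$ decreases to $a_j+\max_\Tt V$. The intermediate value theorem then yields a unique $\Hh_j$ with $M(\Hh_j)=1$, and I set $m_j:=F_j^{-1}(\Hh_j-V(\cdot))$, a smooth, strictly positive function with $\int_\Tt m_j=1$.

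Third, I would assemble and verify the triplet, then prove uniqueness. Define $p_j=\int_\Tt \frac{j}{m_j}\,dy$ and $u_j(x)=\int_0^x \frac{j}{m_j}\,dy-p_j x$; then $u_j$ is smooth, $u_j(0)=0$, and $u_j$ is periodic precisely because $\int_0^1\frac{j}{m_j}\,dy=p_j$. Since $u_{j,x}+p_j=j/m_j$, the product $m_j(u_{j,x}+p_j)=j$ is constant, so the transport equation $-(m_j(u_{j,x}+p_j))_x=0$ holds; and the definition of $F_j$ gives $\frac{(u_{j,x}+p_j)^2}{2}+V=\frac{j^2}{2m_j^2}+V=g(m_j)+\Hh_j$, the Hamilton--Jacobi equation, which $u_j$ solves classically and hence in the viscosity sense. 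For uniqueness, any smooth solution with current $j$ has $m>0$ everywhere (a zero of $m$ would force $j=m(u_x+p)=0$, contradicting $j\neq 0$), so $u_x+p=j/m$, and rearranging the first equation gives $F_j(m)=\Hh-V$, that is $m=F_j^{-1}(\Hh-V)$. The normalization then reads $M(\Hh)=1$, so the strict monotonicity of $M$ forces $\Hh=\Hh_j$ and $m=m_j$; periodicity fixes the drift to be $p_j$ and, with $u(0)=0$, gives $u=u_j$.

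I expect the only genuinely delicate point to be the divergence $M(\Hh)\to+\infty$ as $\Hh$ decreases to its infimum $a_j+\max_\Tt V$. When $g$ is unbounded, $a_j=-\infty$ and this is immediate from monotone convergence, since $F_j^{-1}(\Hh-V(x))\to+\infty$ pointwise as $\Hh\to-\infty$. When $g$ is bounded, $a_j$ is finite and the region where $m_j$ is large shrinks as $\Hh\to a_j+\max_\Tt V$, so pointwise blow-up no longer suffices. Here I would use the universal lower bound $F_j^{-1}(a_j+\epsilon)\geq j/\sqrt{2\epsilon}$, which follows from $F_j(t)-a_j\geq \frac{j^2}{2t^2}$, together with the non-integrability of $(\max_\Tt V-V)^{-1/2}$ near a maximum of $V$, to conclude that $M$ indeed exhausts all of $(0,+\infty)$ and hence attains the value $1$.
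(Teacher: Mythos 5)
Your proposal is correct and follows exactly the route the paper intends: the paper offers no written proof beyond ``elementary computations'' based on the current formulation \eqref{eq: currentform}, and your argument (strict monotonicity of $F_j$, monotonicity of $\Hh\mapsto\int_\Tt F_j^{-1}(\Hh-V)$ to fix $\Hh_j$, then reconstruction of $u_j$ and uniqueness) supplies precisely those computations. The careful treatment of the divergence of $M(\Hh)$ at the left endpoint when $g$ is bounded is a genuine detail the paper glosses over, and your bound $F_j^{-1}(a_j+\epsilon)\geq j/\sqrt{2\epsilon}$ handles it correctly.
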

        
\subsection{$j=0$, $g$ \bf increasing}

To simplify the discussion and illustrate our methods, we consider \eqref{eq: currentform_j=0_intro} with $g(m)=m$. The analysis is similar for other choices of an increasing function, $g$. Accordingly, we have
\begin{equation}\label{eq: currentform_j=0_g=m}
\begin{cases}
\frac{(u_x+p)^2}{2}-m=\Hh-V(x);\\
m \geq 0,\ \int\limits_{\Tt} m dx=1;\\
m(u_x+p)=0.
\end{cases}
\end{equation}        
It is easy to see that
$
        m(x)=(V(x)-\Hh)^+$ for $ x \in \Tt
$.
 The map $\Hh \mapsto \int\limits_{\Tt} (V(x)-\Hh)^+ dx$ is decreasing (strictly decreasing at its positive values). Hence, there exists a unique number,  $\Hh$, such that $\int\limits_{\Tt} m(x)dx=1$. 
Moreover, $\Hh< \max V$ and  $\Hh\geq \int_\Tt V-1$.  If $\min V<\Hh<\max V$,  $m$ is non-smooth and there are regions where it vanishes. In contrast, if $\Hh<\min V$, $m$ is always positive. In this case, $u_x+p=0,$ and, by periodicity of $u$,  $p=0$.  Furthermore,  from the first equation in \eqref{eq: currentform_j=0_g=m}, we have
\[
\Hh=\int_\Tt V(x)dx-1. 
\]
Given $\Hh$, we find from \eqref{eq: currentform_j=0_g=m} that 
\begin{equation*}
        |u_x+p|=\sqrt{2(\Hh-V(x))^+},\quad x \in \Tt.
\end{equation*}
Hence, for
\begin{equation*}
        u^\pm(x)=\pm\int\limits_{0}^{x} \sqrt{2(\Hh-V(y))^+}dy-p x,
\end{equation*}
where $p=\pm \int\limits_{\Tt} \sqrt{2(\Hh-V(y))^+}dy$, the triplets $(u^\pm,m,\Hh)$ solve \eqref{eq: currentform_j=0_g=m}.
However, there are also solutions with a discontinuous derivative, $u_x$. For that, 
let $x_0 \in \Tt$ be such that $V(x_0)<\Hh$. Such a point always exists if $\Hh>\min \limits_{\Tt} V$ or, equivalently, when $\int\limits_{\Tt} V(x)dx -1 < \min\limits_{\Tt} V$. Let
\begin{equation*}
        (u^{x_0}(x))_x=\sqrt{2(\Hh-V(x))^+}\cdot\chi_{x<x_0}-\sqrt{2(\Hh-V(x))^+}\cdot\chi_{x>x_0}-p^{x_0},
\end{equation*}
where $p^{x_0}=\int\limits_{y<x_0} \sqrt{2(\Hh-V(y))^+}dy-\int\limits_{y>x_0} \sqrt{2(\Hh-V(y))^+}dy$ and $\chi$ denotes the characteristic function. 
Therefore, $u^{x_0}$ solves the first equation of \eqref{eq: currentform_j=0_g=m}  almost everywhere, and $u^{x_0}_x$ has  only negative jumps.  Since $m$ is continuous, $u^{x_0}$ is a viscosity solution of that  equation. Consequently, $(u^{x_0},m,\Hh)$ solves \eqref{eq: currentform_j=0_g=m}.

To summarize, \eqref{eq: currentform_j=0_g=m} has a unique, smooth solution if and only if $u_x+p \equiv 0$ or, equivalently, $m(x)=V(x)-\Hh$. The latter holds if and only if
\begin{equation}\label{eq: small_osc}
        \int\limits_{\Tt} V(x)dx \leq 1+\min \limits_{\Tt} V.
\end{equation}
This is the case for small perturbations $V$; that is, $\text{osc} V \leq 1$.

For $A\in \Rr$, set $V_{A}(x)=A \sin(2\pi (x+\frac 1 4))$ and let $m(x,A),\ \Hh(A)$ solve \eqref{eq: currentform_j=0_g=m} for $V=V_{A}$. In Fig.  \ref{fig: minterface}, we plot $m(x,A)$ for $0\leq A\leq 2$. We observe that $m(x,A)$ is smooth for small values of $A$ and becomes non-differentiable for large $A$, as expected from our analysis.  If $A=2$, \eqref{eq: small_osc} does not hold. Thus, $m(x,2)$ is singular and we have multiple solutions, $u(x,2)$. In Fig.  \ref{fig: us}, we plot $m(x,2)$ and two distinct solutions, $u(x,2)$.

        \begin{figure}
        \begin{center}
        \includegraphics[width=50mm]{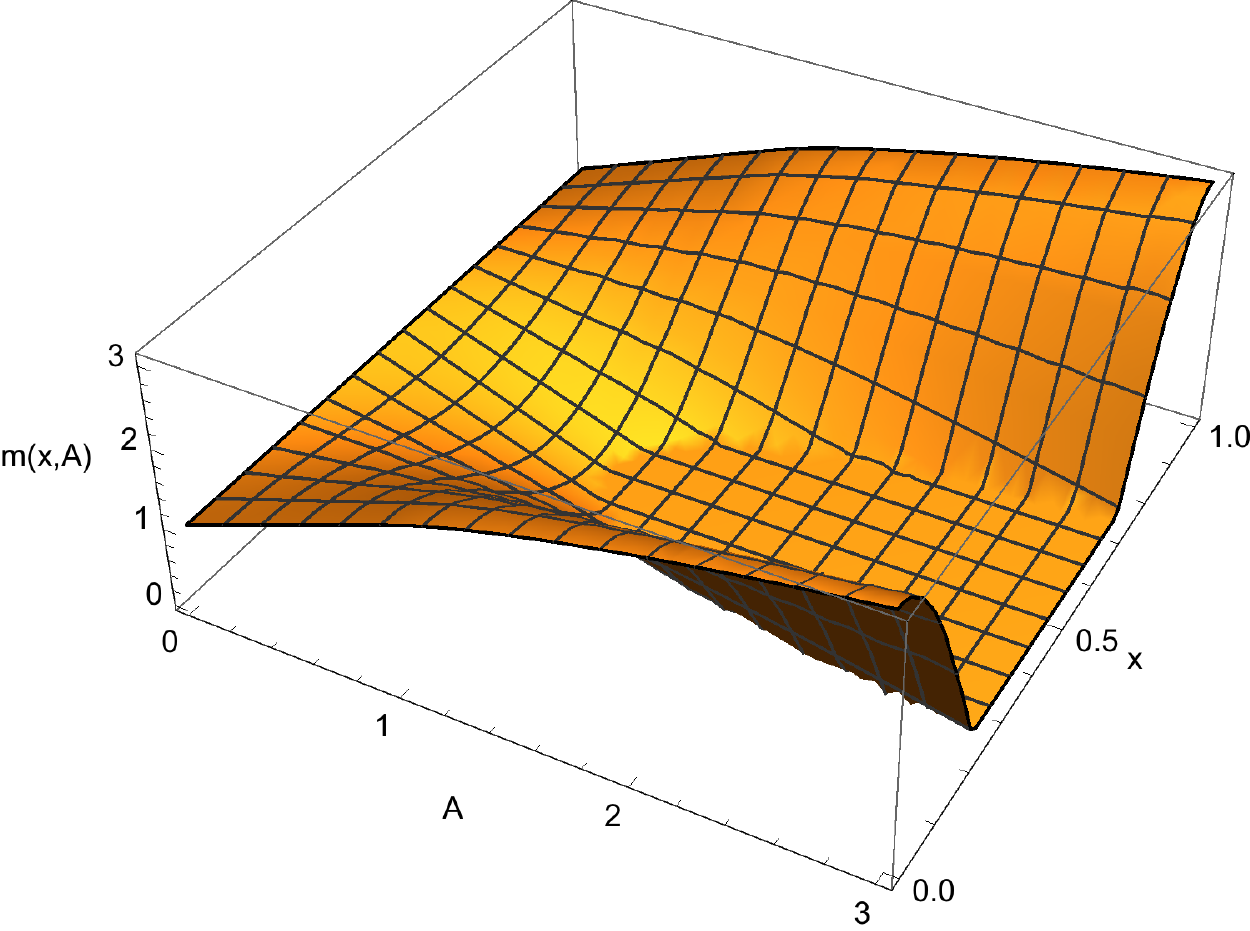}
        \caption{$m(x,A)$.}\label{fig: minterface}
        \end{center}
        \end{figure}

\begin{figure}
\includegraphics[width=50mm]{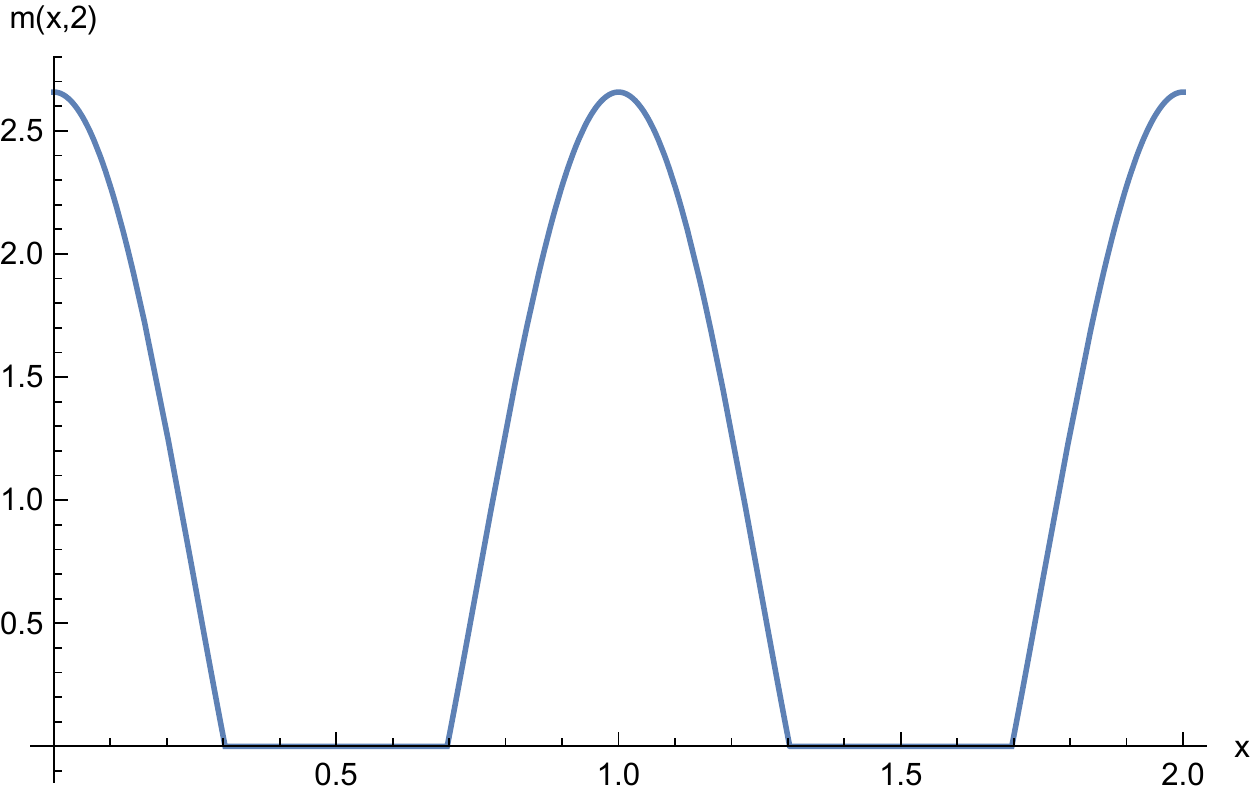}
\includegraphics[width=50mm]{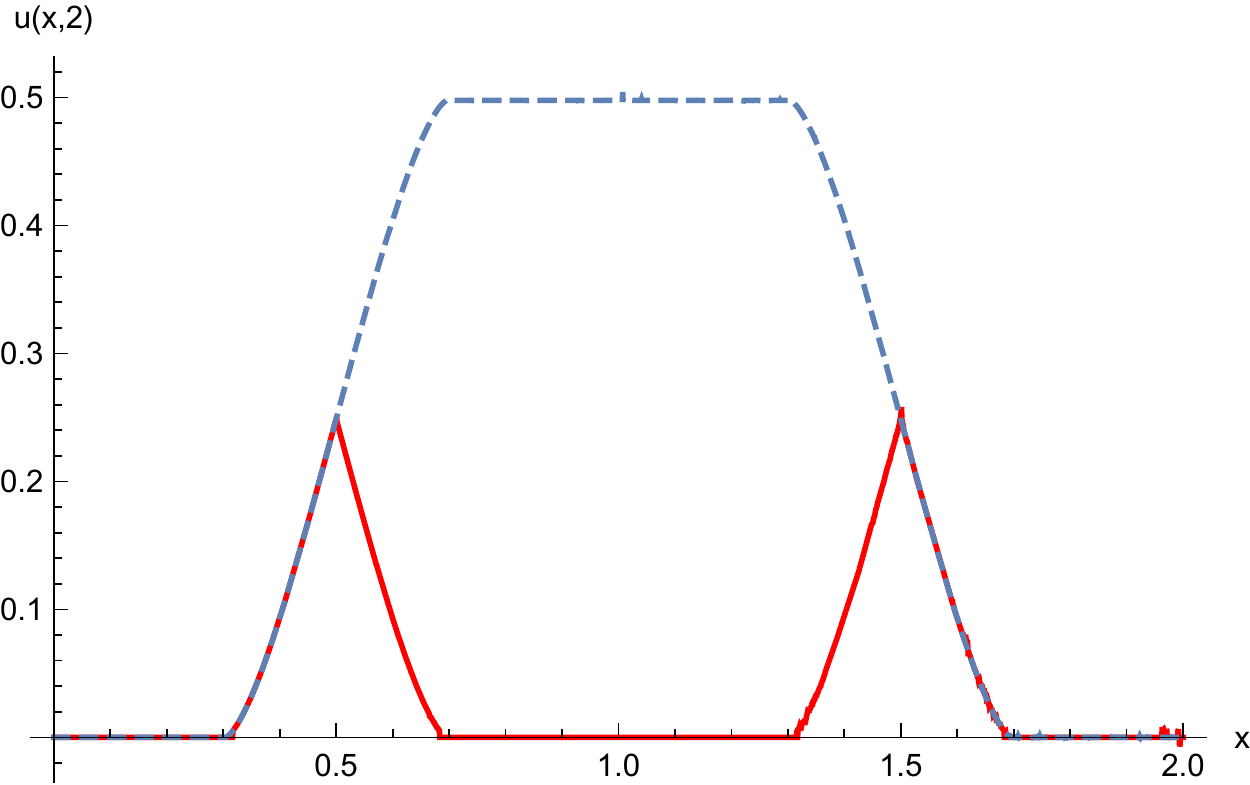}
\caption{$m(x,2)$ (left) and two distinct solutions $u(x,2)$ (right).  }\label{fig: us}
\end{figure}

\section{Monotone elliptic mean-field games}\label{sec:monelliptic}

To study \eqref{maine},  we examine the variational problem determined by  \eqref{mainef}. As before, for concreteness, we consider the case $g(m)=m$. In this case, \eqref{mainef} becomes
\begin{equation}
\label{mfun}
J_\epsilon [m]=
\int_{\Tt} \left(\epsilon^2 \frac{m_x^2}{2 m}+\frac{j^2}{2 m}+\frac{m^2}{2} -V(x) m\right) dx. 
\end{equation}
The preceding functional is convex and, as we prove next,   the direct method in the calculus of variations gives the existence of a minimizer on the set 
\[
\mathcal{A}=\left\{m\in W^{1,2}(\Tt):m\geq  0  \wedge  \int_{\Tt} m =1\right\}.
\]
\begin{pro}
\label{p41}
For each $j\in \Rr$, there exists a unique minimizer, $m$,  of $J_\epsilon[m]$ in $\Aa$. Moreover, $m>0$  and solves
\[
-\epsilon^2\left(\frac{m_x}m\right)_x-\frac{j^2}{2 m^2}+m+\Hh-V(x)=0 
\] 
for some constant $\Hh\in \Rr$. 
\end{pro}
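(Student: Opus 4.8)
The plan is to obtain $m$ by the direct method in the calculus of variations, using the substitution $w=\sqrt m$ both to gain compactness and, later, to establish positivity; uniqueness will then follow from convexity, and the Euler--Lagrange equation from an inner variation once $m$ is bounded away from $0$. First I would fix a minimizing sequence $(m_n)\subset\Aa$; since $m\equiv 1\in\Aa$ has finite energy, $\inf_\Aa J_\epsilon<\infty$. Dropping the nonnegative terms $\tfrac{j^2}{2m}$ and $\tfrac{m^2}2$ and using $-\int_\Tt Vm_n\ge-\|V\|_{L^\infty}$, the bound $J_\epsilon[m_n]\le C$ yields $\|m_n\|_{L^2}\le C'$ and $\int_\Tt\tfrac{(m_n)_x^2}{m_n}\le C'$. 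Setting $w_n=\sqrt{m_n}$ and using $\int_\Tt\tfrac{(m_n)_x^2}{m_n}\,dx=4\int_\Tt((w_n)_x)^2\,dx$ together with $\|w_n\|_{L^2}^2=\int_\Tt m_n=1$, the sequence $(w_n)$ is bounded in $W^{1,2}(\Tt)$. By the compact embedding $W^{1,2}(\Tt)\hookrightarrow C(\Tt)$ I pass to a subsequence with $w_n\rightharpoonup w$ in $W^{1,2}$ and $w_n\to w$ uniformly; then $m:=w^2$ satisfies $m_n\to m$ uniformly, $m\ge 0$, $\int_\Tt m=1$, and $m\in W^{1,2}(\Tt)$ since $w\in W^{1,2}\cap L^\infty$, so $m\in\Aa$.

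To see that $m$ is a minimizer I check lower semicontinuity termwise along this subsequence: the uniform bound and uniform convergence give $\int_\Tt m_n^2\to\int_\Tt m^2$ and $\int_\Tt Vm_n\to\int_\Tt Vm$; Fatou's lemma applied to $1/m_n\to 1/m$ pointwise handles $\int_\Tt\tfrac{j^2}{2m}$; and weak lower semicontinuity of the Dirichlet energy gives $\int_\Tt(w_x)^2\le\liminf\int_\Tt((w_n)_x)^2$, i.e. $\int_\Tt\tfrac{m_x^2}{m}\le\liminf\int_\Tt\tfrac{(m_n)_x^2}{m_n}$. Summing, $J_\epsilon[m]\le\liminf J_\epsilon[m_n]=\inf_\Aa J_\epsilon$, so $m$ minimizes. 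Uniqueness is then immediate: $\Aa$ is convex and $J_\epsilon$ is strictly convex, since $-Vm$ is affine, $1/m$ and the perspective integrand $m_x^2/m$ are convex in $(m,m_x)$, and $m^2/2$ is strictly convex.

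I expect the crux to be positivity. When $j\ne 0$, finiteness of $J_\epsilon[m]$ forces both $\sqrt m\in W^{1,2}(\Tt)$ (hence $\sqrt m\in C^{1/2}$) and $\int_\Tt\tfrac1m<\infty$; if $m(x_0)=0$, then $\sqrt{m(x)}\le C|x-x_0|^{1/2}$ gives $\tfrac1m\ge c|x-x_0|^{-1}$ near $x_0$, which is not integrable, a contradiction, so $m>0$. When $j=0$ this term is absent and I argue via the maximum principle: writing $I[w]:=J_\epsilon[w^2]=\int_\Tt 2\epsilon^2(w_x)^2+\tfrac{w^4}2-Vw^2$, the minimizer corresponds to the nonnegative minimizer $w=\sqrt m$ of $I$ over $\{w\in W^{1,2}(\Tt):\int_\Tt w^2=1\}$ (replacing $w$ by $|w|$ leaves $I$ unchanged). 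Since $I$ is smooth on $W^{1,2}(\Tt)\hookrightarrow L^\infty$, no positivity is needed to take its first variation, and $w$ solves $-2\epsilon^2 w_{xx}+w^3-Vw=\Hh w$ weakly; elliptic regularity gives $w\in C^1$, and rewriting this as $-2\epsilon^2 w_{xx}+q\,w=0$ with $q=w^2-V-\Hh\in L^\infty$, the strong maximum principle applied to the nonnegative, nontrivial $w$ yields $w>0$, hence $m>0$.

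Finally, with $m>0$ and continuous on the compact torus, $m\ge m_0>0$, so the integrand $(s,\xi)\mapsto\epsilon^2\tfrac{\xi^2}{2s}+\tfrac{j^2}{2s}+\tfrac{s^2}2-Vs$ is smooth along $m$. For $\phi\in C^\infty(\Tt)$ with $\int_\Tt\phi=0$, the curve $m+t\phi$ stays in $\Aa$ for small $t$, and imposing $\tfrac{d}{dt}\big|_{t=0}J_\epsilon[m+t\phi]=0$, after an integration by parts in the gradient term and the identity $(\tfrac{m_x}{m})_x=\tfrac{m_{xx}}m-\tfrac{m_x^2}{m^2}$, yields the Euler--Lagrange equation \eqref{efm} with $g(m)=m$, i.e. the equation in the statement, with $\Hh$ the Lagrange multiplier of $\int_\Tt m=1$. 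The existence, uniqueness, and Euler--Lagrange computations are routine once compactness is set up through $w=\sqrt m$; the principal difficulty is the positivity step together with the lower semicontinuity of the singular terms that underpins it.
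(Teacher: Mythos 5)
Your proof is correct, and for $j\neq 0$ it is essentially the paper's argument: both extract compactness from the bound on $\int_\Tt \epsilon^2\frac{(m_n)_x^2}{m_n}+\frac{j^2}{2m_n}$, identify $\sqrt{m_n}$ as equi-H\"older of exponent $\tfrac12$ (you via the embedding $W^{1,2}(\Tt)\hookrightarrow C^{1/2}$, the paper via Morrey), and exclude a zero of $m$ by playing the resulting bound $m(x)\leq C|x-x_0|$ against the Fatou bound $\int_\Tt \frac1m<\infty$; you merely spell out the termwise lower semicontinuity that the paper leaves implicit. The genuine divergence is at $j=0$, where the singular term is absent and positivity is no longer free. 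The paper abandons the direct method there: it rewrites the Euler--Lagrange equation as $-\epsilon^2(\ln m)_{xx}+m-V=-\Hh$ with $\Hh=\int_\Tt V-1$, defines a map $\eta\mapsto e^{w}$ by solving $-\epsilon^2 w_{xx}+\eta-V=-\Hh$ with $\int_\Tt e^w=1$, produces a positive solution via Schauder's fixed point theorem, and then uses convexity to identify this critical point with the unique minimizer. You instead stay variational: you substitute $w=\sqrt m$, observe that the transformed functional $\int_\Tt 2\epsilon^2 w_x^2+\tfrac{w^4}{2}-Vw^2$ is smooth on $W^{1,2}(\Tt)$ so the first variation requires no a priori positivity, and upgrade $w\geq 0$ to $w>0$ by elliptic regularity and the strong maximum principle (which in one dimension reduces to an ODE uniqueness argument at a point where $w=w_x=0$, so the appeal is unproblematic). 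Both routes are sound. The paper's fixed-point construction yields a positive classical solution directly and is reused in the non-convex anti-monotone setting of Section \ref{sec:amonotoneelliptic}; your maximum-principle argument would also transfer there, and it keeps the entire proof inside the direct method, at the cost of one extra PDE tool. The only technical point worth flagging in your write-up is the identification $\int_\Tt \frac{m_x^2}{m}=4\int_\Tt w_x^2$ on the set $\{m=0\}$, where one must use that $m_x=0$ a.e.\ and adopt the convention that the integrand vanishes there; this is standard and the paper glosses over it as well.
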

\begin{proof}

The uniqueness of a positive minimizer is a consequence of the strict convexity of $J_\epsilon $. The existence of a non-negative minimizer requires separate arguments for the cases $j\neq 0$\ and $j=0$. 

We  first examine the case $j\neq 0$. We begin by taking a minimizing sequence, $m_n\in \Aa$. Then, there exists a constant, $C>0,$ such that 
\[
\int_{\Tt} \frac{(m_n)_x^2}{m_n}+\frac{1}{m_n}dx\leq C.
\]
Thus, by Morrey's theorem, the functions $\sqrt{m_n}$ are equi-H\"older 
continuous of exponent $\frac 1 2$.
Therefore, because $\int m_n=1$, this sequence is equibounded and, through some subsequence, $m_n\to m$ for some function $m\geq 0$. Moreover, by Fatou's lemma,
\[
\int_{\Tt} \frac 1 {m}dx\leq C.
\]
Suppose that  $\min m=m(x_0)=0$. Then, because $\sqrt m$ is H\"older continuous, we have $m(x)\leq C|x-x_0|$. However, 
\[
\int_{\Tt} \frac{1}{|x-x_0|}dx
\]
is not finite, which is a contradiction. Thus,   $m$ is a strictly positive minimizer. Moreover,  it solves the corresponding Euler-Lagrange equation.

For  $j=0$, we rewrite the Euler-Lagrange equation as
\begin{equation}
\label{elee1}
-\epsilon^2(\ln m)_{xx} + m -V(x) = -\Hh.
\end{equation}

Let $\mathcal{P}$ be the set of non-negative functions in $L^\infty(\Tt^d)$
and consider the map $\Xi:\mathcal{P}\to  \mathcal{P}$ defined as follows.
Given $\eta\in \mathcal{P}$, we solve the PDE
\[
-\epsilon^2w_{xx} + \eta -V(x) = -\Hh, 
\]
where $\Hh$ satisfies the 
compatibility condition
\[
\Hh = \int_{\Tt} V dx - 1,  
\]
and $w:\Tt\to \Rr$ is such that $\int e^w dx=1$. An elementary argument shows
that $w$ is uniformly bounded from above and from below. Next,
we set $\Xi(\eta)=e^w$. The mapping $\Xi$ is continuous and compact. Accordingly,
by Schauder's Fixed Point Theorem, there is a fixed point, $m,$\, that solves
\eqref{elee1}. By the convexity of the variational problem \eqref{mfun},   this fixed point is the unique solution of the Euler-Lagrange equation.  \end{proof}

Next, to study the convergence as $\epsilon\to 0$, we investigate the 
$\Gamma$-convergence as $\epsilon\to 0$ of  $J_\epsilon$. A simple modification of the arguments in \cite{Braides}, Chapter 6, shows that 
\[
J_\epsilon \overset{\Gamma}{\to} J, 
\]
where
\[
J[m]=
\int_{\Tt} \left(\frac{j^2}{2 m}+\frac{m^2}{2}
-V(x) m\right) dx, 
\]
if $m\geq 0$ and $\int m=1$. In Fig. \ref{fig: eliptic1}, we observe numerical evidence for this $\Gamma$-convergence.

\begin{figure}
        \centering      
        \includegraphics[width=50mm]{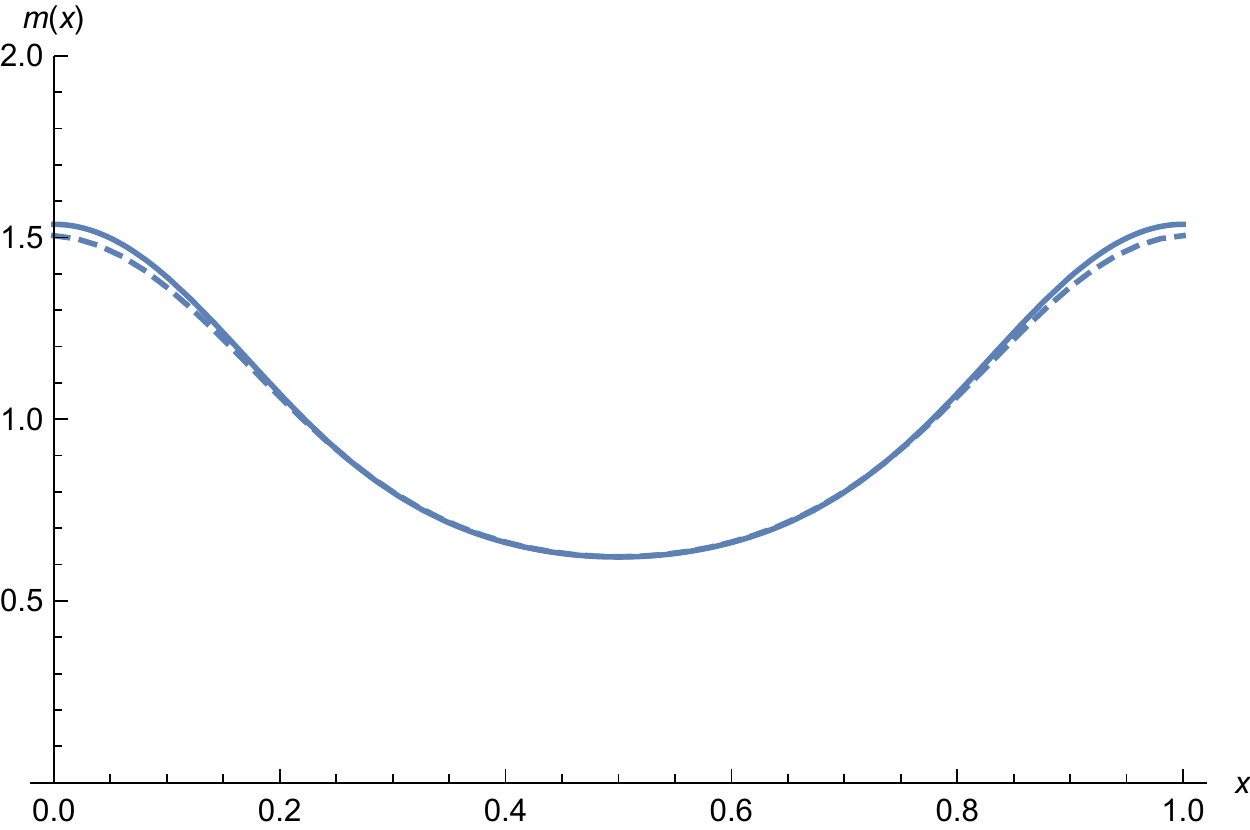}
        \caption{ Solution $m$ of \eqref{maine} when $g(m)=m,\ j=1,\ V(x)= \sin(2 \pi (x + 1/4))$ for $\epsilon=0.01$ (dashed) and for $\epsilon=0$ (solid).}
        \label{fig: eliptic1}
\end{figure}

\section{Regular viscosity solutions in anti-monotone mean-field games}   \label{sec: g_decreasing}
 
Here, we investigate MFGs with decreasing $g$. 
To simplify, we assume that $g(m)=-m$. However, our arguments are valid for a general decreasing $g$.
In contrast with the monotone case, $m$ may not be unique.
Furthermore,  $m$ can be discontinuous and, thus, viscosity solutions of the Hamilton-Jacobi equation in \eqref{main} should be interpreted in the discontinuous sense. In this section, we are interested in regular discontinuous viscosity solutions; that is, solutions satisfying conditions a. and b. stated in  the Introduction. Here, we examine existence, uniqueness, and additional properties of such solutions. In Section \ref{discsec}, we prove that these solutions are indeed discontinuous viscosity solutions.

\subsection{$j \neq 0$, $g$ decreasing}\label{sec: j>0}

To simplify the presentation, we consider $j>0$.

        With $g(m)=-m,$ \eqref{eq: currentform} becomes
        \begin{equation}\label{eq: currentform_1}
        \begin{cases}
        \frac{j^2}{2m^2}+m=\Hh-V(x);\\
        m>0,\ \int\limits_{\Tt} m dx=1;\\
        \int\limits_{\Tt} \frac{1}{m} dx=\frac{p}{j}.
        \end{cases}
        \end{equation}
        The minimum of $t\mapsto j^2/2t^2+t$ is attained at $t_{min}=j^{2/3}$. Thus, $j^2/2t^2+t \geq 3j^{2/3}/2$ for $t>0$.

Therefore, a lower bound for $\Hh$ is
        \begin{equation}\label{eq: Hboundbybelow}
        \Hh \geq \Hh_j^{cr} =\max_{\Tt} V+\frac{3j^{2/3}}{2},
        \end{equation}
        where the superscript $^{cr}$ stands for critical.
        
        The function $t \mapsto j^2/2t^2+t$ is decreasing on the interval $(0,t_{min})$ and increasing on the interval $(t_{min},+\infty)$. For any $\Hh$ satisfying \eqref{eq: Hboundbybelow}, let $m_{\Hh}^{-}$ and $m_{\Hh}^{+}$ be the solutions of 
\[
\frac{j^2}{2(m_{\Hh}^\pm(x))^2}+m_{\Hh}^\pm(x)=\Hh-V(x),
\]
with $0\leq  m_{\Hh}^-(x)\leq  t_{min}\leq  m_{\Hh}^+(x)$.
        Due to \eqref{eq: Hboundbybelow},  $m_{\Hh}^{-}$ and $m_{\Hh}^{+}$ are well defined. Furthermore, if $(u,m , \Hh)$ solves \eqref{main}, then $m(x)$ agrees with either $m^+_{\Hh}(x)$ or $m^-_{\Hh}(x)$, almost everywhere in $\Tt$.
        
        Let $m_{j}^-:=m_{\Hh^{cr}_j}^{-}$ and $m_{j}^+:=m_{\Hh^{cr}_j}^{+}$. Note that $m_{j}^-(x)\leq m_{j}^+(x)$ for all $x \in \Tt$, and the equality holds only at the maximum points of $V$. Hence, $m_{j}^-(x)< m_{j}^+(x)$ on a set of positive Lebesgue measure unless $V$ is constant.
        
        The two fundamental quantities for our analysis are
        \begin{equation}\label{eq: a+-}
                \begin{cases}
                        \alpha^{+}(j)=\int\limits_{0}^{1}m_{j}^{+}(x)dx,\\
                        \alpha^{-}(j)=\int\limits_{0}^{1}m_{j}^{-}(x)dx.
                \end{cases}
        \end{equation}
        If $V$ is not constant, we have
        \begin{equation*}
                \alpha^{-}(j)<\alpha^{+}(j)
        \end{equation*}
        for $j>0$.
        
        \begin{proposition}\label{prp: j>0}
                Suppose that $x=0$ is the single maximum of $V$. Then, for every $j>0,$ there exists a unique number, $p_j$, such that \eqref{main} has a regular solution with a current level, $j$. Moreover, the solution of \eqref{eq: currentform_1}, $(u_j,m_j,\Hh_j)$, is unique and given as follows.
                \begin{itemize}
                        \item[i.] If $\alpha^+(j) \leq 1,$
                        \begin{equation}\label{eq: sols_i}
                        m_j(x)=m^{+}_{\Hh_j}(x),\quad u_j(x)=\int\limits_{0}^{x}\frac{jdy}{m_j(y)}-p_jx,
                        \end{equation}
                        where $p_j=\int\limits_{\Tt}\frac{jdy}{m_j(y)}$ and $\Hh_j$ is such that $\int\limits_{\Tt} m_j(x)dx=1$.
                        \item[ii.] If $\alpha^-(j) \geq 1,$

                        \begin{equation}\label{eq: sols_ii}
                        m_j(x)=m^{-}_{\Hh_j}(x),\quad u_j(x)=\int\limits_{0}^{x}\frac{jdy}{m_j(y)}-p_jx,
                        \end{equation}
                        where $p_j=\int\limits_{\Tt}\frac{jdy}{m_j(y)}$ and $\Hh_j$ is such that $\int\limits_{\Tt} m_j(x)dx=1$.
                        \item[iii.] If $\alpha^-(j) < 1 < \alpha^+(j)$, we have that $\Hh_j=\Hh_j^{cr}$, and                         \begin{equation}\label{eq: sols_iii}
                        m_j(x)=m^{-}_{j}(x)\chi_{[0,d_j)}+m^{+}_{j}(x)\chi_{[d_j,1)},\ u_j(x)=\int\limits_{0}^{x}\frac{jdy}{m_j(y)}-p_jx,
                        \end{equation}
                        where $p_j=\int\limits_{\Tt}\frac{jdy}{m_j(y)}$ and $d_j$ is such  that \[\int\limits_\Tt m_j(x)dx=\int\limits_{0}^{d_j} m^-_{j}(x)dx+\int\limits_{d_j}^1 m^{+}_{j}(x)dx=1.\]
                \end{itemize}
        \end{proposition}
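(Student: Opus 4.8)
The plan is to reduce the problem to determining the pair $(m_j,\Hh_j)$ from the first two relations of \eqref{eq: currentform_1}; once $m_j$ is fixed, the third relation forces $p=p_j:=\int_{\Tt}\frac{j}{m_j}\,dy$ and $u_j$ is recovered by integration, so the uniqueness of $p_j$ and $u_j$ will follow from that of $m_j$. Throughout I may take $j>0$. Recall that any regular solution has $m(x)\in\{m^-_{\Hh}(x),m^+_{\Hh}(x)\}$ a.e.\ for some $\Hh\geq\Hh^{cr}_j$, as established before the statement, so the whole question is which of the two branches is selected at each point and which $\Hh$ is compatible with unit mass.

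The first and conceptually central step is to convert the regularity condition b into an admissibility rule for switching branches. Since $j>0$, we have $u_x+p=j/m$, so at any jump point $x_0$ condition b gives $u_x(x_0^-)\geq u_x(x_0^+)$, i.e.\ $m(x_0^-)\leq m(x_0^+)$: the density $m$ may only jump \emph{upward}. Because $m^-_{\Hh}\leq t_{min}\leq m^+_{\Hh}$, this means $m$ may pass from the lower branch to the upper one but never the reverse, the only exception being a point where the two branches coincide, namely where $\Hh-V(x)=3j^{2/3}/2$. Under the single-maximum hypothesis this coincidence can occur only at $x=0$, and only when $\Hh=\Hh^{cr}_j$. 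Reading a regular solution from left to right on $[0,1)$ and applying condition b also at the periodic junction $x=0$ (where $m(1^-)\leq m(0^+)$ is required), I conclude that $m$ is either a pure branch $m=m^\pm_{\Hh}$ on all of $\Tt$ for some $\Hh\geq\Hh^{cr}_j$, or a single-switch profile $m=m^-_{j}\chi_{[0,d)}+m^+_{j}\chi_{[d,1)}$, the latter being possible only at $\Hh=\Hh^{cr}_j$ because the junction at $0$ then forces $m^+_{\Hh}(0)=m^-_{\Hh}(0)=t_{min}$.

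The second step is a monotonicity-plus-intermediate-value argument to match the mass. Set $\mathcal{M}^\pm(\Hh)=\int_{\Tt}m^\pm_{\Hh}\,dx$. Since $F_j$ is strictly decreasing on $(0,t_{min})$ and strictly increasing on $(t_{min},\infty)$, increasing $\Hh$ decreases $m^-_{\Hh}(x)$ and increases $m^+_{\Hh}(x)$; hence $\mathcal{M}^-$ is continuous and strictly decreasing with $\mathcal{M}^-(\Hh^{cr}_j)=\alpha^-(j)$ and $\mathcal{M}^-(\Hh)\to0$, while $\mathcal{M}^+$ is continuous and strictly increasing with $\mathcal{M}^+(\Hh^{cr}_j)=\alpha^+(j)$ and $\mathcal{M}^+(\Hh)\to\infty$. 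In case i ($\alpha^+(j)\leq1$) the intermediate value theorem yields a unique $\Hh_j\geq\Hh^{cr}_j$ with $\mathcal{M}^+(\Hh_j)=1$, giving \eqref{eq: sols_i}; the pure lower branch is excluded because $\mathcal{M}^-\leq\alpha^-(j)<\alpha^+(j)\leq1$, and mixed profiles (possible only at $\Hh^{cr}_j$) carry mass $\mathcal{N}(d):=\int_0^d m^-_j+\int_d^1 m^+_j\in[\alpha^-(j),\alpha^+(j)]$, which equals $1$ only in the degenerate coincidence with the pure $m^+$ solution. Case ii is symmetric. In case iii both pure branches are excluded ($\mathcal{M}^+\geq\alpha^+(j)>1$ and $\mathcal{M}^-\leq\alpha^-(j)<1$), so $\Hh_j=\Hh^{cr}_j$ is forced, and since $\mathcal{N}$ is continuous, strictly decreasing, with $\mathcal{N}(0)=\alpha^+(j)>1$ and $\mathcal{N}(1)=\alpha^-(j)<1$, there is a unique $d_j$ with $\mathcal{N}(d_j)=1$, giving \eqref{eq: sols_iii}.

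Finally I would verify that each candidate is a genuine regular solution and assemble the statement. In every case $m_j>0$, is integrable with $\int_{\Tt}m_j=1$, and on each branch solves the algebraic first equation of \eqref{eq: currentform_1}, so condition a holds; the only discontinuity (at $d_j$ in case iii) is an upward jump of $m$, hence a downward jump of $u_x$, so condition b holds, while the junction at $x=0$ is continuous because the branches merge at $t_{min}$ there. Then $u_j$ defined by \eqref{eq: sols_iii} is Lipschitz and periodic, since $u_j(1)=\int_0^1\frac{j}{m_j}\,dy-p_j=0=u_j(0)$, and $(u_j,m_j,\Hh_j)$ solves \eqref{main}; uniqueness of $m_j$ across the three cases comes from the strict monotonicities above, whence $p_j$ and $u_j$ are uniquely determined. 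The main obstacle is the first step: correctly translating the viscosity/regularity jump condition b into the one-directional branch-switching rule and using the single-maximum hypothesis to force $\Hh=\Hh^{cr}_j$ for every mixed profile. The remaining existence and uniqueness is routine, though one must still check continuity of $m^\pm_{\Hh}$ (and hence of $\mathcal{M}^\pm$ and $\mathcal{N}$) near the maximum of $V$, where the two branches merge at $t_{min}$.
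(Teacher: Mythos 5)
Your proposal is correct and follows essentially the same route as the paper: you translate condition b (for $j>0$) into the rule that $m$ may only jump upward between the branches $m^{\pm}_{\Hh}$, use the single-maximum hypothesis to force any return to the lower branch to occur at $x=0$ with $\Hh=\Hh^{cr}_j$, and then match the unit mass by monotonicity of $\Hh\mapsto\int_{\Tt}m^{\pm}_{\Hh}$ and of $d\mapsto\int_0^d m^-_j+\int_d^1 m^+_j$ via the intermediate value theorem. The only (welcome) difference is organizational: you establish the branch-switching classification up front for all $\Hh\geq\Hh^{cr}_j$ and explicitly exclude the competing profiles in cases i and ii, whereas the paper runs the corresponding contradiction argument only inside case iii.
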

        \begin{proof}
        \textbf{Case i.} The function $j^2/2t^2+t$ is increasing on the interval $(t_{min},+\infty)$. Therefore,  $\Hh \mapsto m_{\Hh}^+(x)$ is increasing for all $x.$ Hence,  the mapping
        \begin{equation*}
        \Hh\mapsto \int\limits_{\Tt}m_{\Hh}^{+}(x)dx,
        \end{equation*}
        is increasing. By assumption,
        \(
                \int\limits_{\Tt}m_{\Hh_j^{cr}}^{+}(x)dx=\int\limits_{\Tt}m_j^+(x)dx\leq 1.
        \)
        Therefore, there exists a unique $\Hh_j \geq \Hh_j^{cr}$ such that $\int\limits_{\Tt}m_{\Hh_j}^{+}(x)dx=1$. Thus,  $(u_j,m_j)$ given by  \eqref{eq: sols_i} is the unique solution of \eqref{main} with $\Hh=\Hh_j$ and $p=p_j$.
        
        \textbf{Case ii.} The function $j^2/2t^2+t$ is decreasing on the interval $(0,t_{min})$. Therefore,  $m_{\Hh}^-(x)$ is decreasing in $\Hh$ for all $x$. Hence, the mapping
        \begin{equation*}
        \Hh\mapsto \int\limits_{\Tt}m_{\Hh}^{-}(x)dx
        \end{equation*}
        is decreasing. By assumption,
        \(
        \int\limits_{\Tt}m_{\Hh_j^{cr}}^{-}(x)dx=\int\limits_{\Tt}m_j^-(x)dx\geq 1.
        \)
        Thus, there exists a unique number,  $\Hh_j \geq \Hh_j^{cr}$, such that $\int\limits_{\Tt}m_{\Hh_j}^{-}(x)dx=1$. Hence, $(u_j,m_j)$ given by \eqref{eq: sols_ii} is the unique solution of \eqref{main} with $\Hh=\Hh_j$ and $p=p_j$.
        
        \textbf{Case iii.} We first show that \eqref{main} does not have regular solutions for $\Hh>\Hh_j^{cr}$. By contradiction, suppose that \eqref{main} has a regular solution, $(u,m,\Hh),$ for some $\Hh>\Hh_j^{cr}$ and $p \in \Rr$.
Evidently, $
        m(x)=m_{\Hh}^+(x)\chi_E+m_{\Hh}^-(x)\chi_{\Tt\setminus E}
       $
        for some subset $E \subset \Tt$. Furthermore,         \begin{equation}\label{eq: gap}
                \inf_{\Tt} (m_{\Hh}^+(x)-m_{\Hh}^-(x))>0
        \end{equation}
        because $\Hh>\Hh_j^{cr}$. Moreover,
        \[\int\limits_{\Tt}m(x)dx=\int\limits_{E}m_{\Hh}^+(x)dx+\int\limits_{\Tt\setminus E}m_{\Hh}^-(x)dx
        \]
        and        \[\int\limits_{\Tt}m^-_{\Hh}(x)dx<\int\limits_{\Tt}m^-_{j}(x)dx<1<\int\limits_{\Tt}m^+_{j}(x)dx<\int\limits_{\Tt}m^+_{\Hh}(x)dx.
        \]
        Therefore, neither $E$ nor $\Tt\setminus E$ can be empty or have zero Lebesgue measure.
Because $E$ and $\Tt\setminus E$ are not negligible, there exists a real number, $e,$
such that for every $\epsi>0,$
        \begin{equation*}
        (e-\epsi,e) \cup E \neq \emptyset \quad \text{and}\quad (e,e+\epsi) \cup E^c \neq \emptyset.
        \end{equation*}
        
        According to \eqref{eq: gap}, $m$ has a negative jump, $m(e^-)-m(e^+)<0$, at $x=e$. Hence, $u_x=j/m-p$ has a positive jump, $\frac{j}{m^-(e)}-\frac{j}{m^+(e)}>0$, at $x=e$. However, derivatives of regular solutions can only have negative jumps and, thus, this contradiction implies $\Hh_j=\Hh^{cr}_j$.
        
        Next, we construct  $m_j$ and $u_j$ and determine $p_j$. We look for a function  $m_j$ of the form
        \begin{equation}\label{eq: m_d}
        m_j(x)=\begin{cases}
        m^-_{j}(x),\ x\in [0,d),\\
        m^+_{j}(x),\ x\in [d,1).
        \end{cases}
        \end{equation}
        Note that \eqref{eq: m_d} is the only possibility for $m_j$ because $m_j$  can switch from $m^+_j$ to $m^-_j$ only if there is no jump at the switching point; that is, $m^+_j$ and $m^-_j$ are equal at that point,  which only holds at maximum of $V$. Thus, by periodicity, $m_j$ can switch to $m^-_j$ from $m^+_j$ only at $x=0$ and $x=1$.

        It remains to choose $d \in (0,1)$ such that $\int\limits_{\Tt} m_j(x)dx=1$. Let
        \[\phi(d)=\int\limits_{0}^1 m_j(x)dx=\int\limits_{0}^d m^-_{j}(x)dx+\int\limits_{d}^1 m^{+}_{j}(x)dx.
        \]
        Because $\phi(0)>1$ and $\phi(1)<1$
        and because $\phi'(d)=m^-_{j}(d)-m^+_{j}(d)<0$ for $d\in (0,1)$, there exists a unique $d_j \in (0,1)$ such that $\phi(d_j)=1$. The triplet defined by \eqref{eq: sols_iii},
$(u_j,m_j,\Hh_{j}),$ solves  \eqref{main}.
        \end{proof}
        By the previous proposition, if $V$ has a single maximum point then, for every current, $j>0$, there exists a unique $p_j$ and a unique triplet, $(u_j,m_j,\Hh_j)$, that solves \eqref{eq: currentform_1} for $p=p_j$.
In contrast, as we show next, if $V$ has multiple maxima and $j>0$ is such that Case \textit{iii} in Proposition \ref{prp: j>0} holds, there exist infinitely many solutions.
        
        \begin{proposition}\label{prp: V_multimax}
                Suppose that $V$ attains a maximum at $x=0$ and at $x=x_0 \in (0,1)$. Let $j$ be such that $\alpha^-(j) < 1 < \alpha^+(j)$. Then, there exist infinitely many numbers, $p,$ and pairs, $(u,m),$ such that $(u,m,\Hh_j^{cr})$ is a regular solution of \eqref{main}.
        \end{proposition}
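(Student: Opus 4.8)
The plan is to exploit the fact that, with two maxima, the density has \emph{two} admissible points at which it may drop from the upper branch $m_j^+$ back to the lower branch $m_j^-$, whereas the single maximum in Proposition \ref{prp: j>0} forced a single such point and hence rigidity. First I would record, exactly as in Case \textit{iii} of Proposition \ref{prp: j>0}, that a regular solution with current $j$ is forced to have $\Hh=\Hh_j^{cr}$: the argument that $\Hh>\Hh_j^{cr}$ produces a strictly positive gap $\inf_\Tt(m_\Hh^+-m_\Hh^-)>0$ and therefore a forbidden positive jump of $u_x=j/m-p$ never used that $V$ had a single maximum, so it applies verbatim. At $\Hh=\Hh_j^{cr}$ the branches coincide, $m_j^+(x)=m_j^-(x)=j^{2/3}$, precisely at the maxima of $V$; hence both $x=0$ and $x=x_0$, being global maxima, are legal switch points at which $m$ stays continuous.

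Next I would construct the family. For $d_1\in(0,x_0)$ and $d_2\in(x_0,1)$ set
\[
m_{d_1,d_2}=m_j^-\,\chi_{[0,d_1)}+m_j^+\,\chi_{[d_1,x_0)}+m_j^-\,\chi_{[x_0,d_2)}+m_j^+\,\chi_{[d_2,1)},
\]
together with $p=\int_\Tt (j/m_{d_1,d_2})\,dy$ and $u(x)=\int_0^x (j/m_{d_1,d_2}-p)\,dy$. By construction $j=m(u_x+p)$ is constant, so the transport equation holds; on each subinterval $m_{d_1,d_2}$ solves the algebraic relation in \eqref{eq: currentform_1} with $\Hh=\Hh_j^{cr}$; the only jumps of $u_x$ sit at $d_1,d_2$, where $m$ jumps \emph{up} so $u_x$ jumps \emph{down}, which is exactly condition b., while at $0$ and $x_0$ the density is continuous. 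Thus every $m_{d_1,d_2}$ satisfying $\int_\Tt m_{d_1,d_2}=1$ gives a regular solution. Writing this constraint as $A_1(d_1)+A_2(d_2)=1$, with $A_1(d_1)=\int_0^{d_1}m_j^-+\int_{d_1}^{x_0}m_j^+$ and $A_2(d_2)=\int_{x_0}^{d_2}m_j^-+\int_{d_2}^1 m_j^+$ continuous and strictly decreasing, I would note that the reachable values of $A_1$ and $A_2$ fill the open intervals $\bigl(\int_0^{x_0}m_j^-,\int_0^{x_0}m_j^+\bigr)$ and $\bigl(\int_{x_0}^1 m_j^-,\int_{x_0}^1 m_j^+\bigr)$. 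Since their sums range over $[\alpha^-(j),\alpha^+(j)]\ni 1$ by the hypothesis $\alpha^-(j)<1<\alpha^+(j)$, one can solve $A_1(d_1)=v$, $A_2(d_2)=1-v$ for $v$ in a nonempty open interval, producing a one–parameter continuum of \emph{distinct} densities (distinct because $d_1$ is strictly monotone in $v$), hence infinitely many distinct pairs $(u,m)$.

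Finally I would show $p$ is not constant along this continuum. Parametrizing by $v$ and differentiating the implicit relation yields
\[
p'(v)=j\bigl(P(h(d_2))^{-1}-P(h(d_1))^{-1}\bigr),
\]
where $h=\Hh_j^{cr}-V$ and $P(h)=m_j^+m_j^-$ is the product of the two positive roots of $t^3-h\,t^2+j^2/2=0$. Vieta's relations give $P(h)=-j^2/(2m_0(h))$ with $m_0<0$ the negative root, and differentiating the cubic shows $m_0$, hence $P$, is strictly increasing in $h$; therefore $p'(v)=0$ only where $V(d_1(v))=V(d_2(v))$. Since $d_1(v)$ strictly decreases while $d_2(v)$ strictly increases, this equality cannot persist on an interval for non-degenerate $V$, so $p$ sweeps infinitely many values. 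I expect the main obstacle to be the bookkeeping that certifies the admissible parameter set as a genuine nondegenerate interval — this is precisely where $\alpha^-(j)<1<\alpha^+(j)$ is used in full — together with the strict monotonicity of the branch product $P$, which is what guarantees that $p$ truly varies rather than remaining locked to a single value.
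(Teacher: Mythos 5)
Your construction coincides with the paper's: the same two-jump density $m_j^-\chi_{[0,d_1)}+m_j^+\chi_{[d_1,x_0)}+m_j^-\chi_{[x_0,d_2)}+m_j^+\chi_{[d_2,1)}$, the same regularity check (upward jumps of $m$ at $d_1,d_2$ give downward jumps of $u_x$, and $m$ is continuous at the two maxima where the branches meet), and the same intermediate-value argument for the mass constraint. The paper runs that argument as $\phi(0,x_0)>1>\phi(x_0,1)$ along an arbitrary curve joining $(0,x_0)$ to $(x_0,1)$, while your split $A_1(d_1)+A_2(d_2)=1$ with both $A_i$ strictly decreasing is a tidier parametrization of the same level set; your check that the admissible interval of $v$ is nonempty is exactly where $\alpha^-(j)<1<\alpha^+(j)$ enters in both versions. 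The preliminary step forcing $\Hh=\Hh_j^{cr}$ is not needed, since the proposition fixes that value, though your observation that the Case iii rigidity argument never used uniqueness of the maximum is correct.

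The one place you go beyond the paper --- showing that $p$ itself sweeps infinitely many values --- is also the one place your argument has a hole. The formula $p'(v)=j\bigl(P(h(d_2))^{-1}-P(h(d_1))^{-1}\bigr)$ and the monotonicity of $P$ via Vieta are right, but the claim that $V(d_1(v))=V(d_2(v))$ ``cannot persist on an interval'' is false in general. For the paper's own example $V(x)=\tfrac{1}{2}\sin(4\pi(x+\tfrac{1}{8}))=\tfrac{1}{2}\cos(4\pi x)$, the symmetries $V(\tfrac{1}{2}-x)=V(x)=V(-x)$ make $A_1(d)+A_1(\tfrac{1}{2}-d)$ constant, equal to $\tfrac{1}{2}(\alpha^+(j)+\alpha^-(j))$; at the current $j^*$ with $\alpha^+(j^*)+\alpha^-(j^*)=2$ (which exists and automatically satisfies $\alpha^-<1<\alpha^+$) the entire solution curve is $d_2=1-d_1$, along which $V(d_1)=V(d_2)$ identically and $p$ is constant. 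The paper's proof, for what it is worth, only establishes infinitely many pairs $(u,m)$ and is silent on the multiplicity of $p$, so your core argument matches it exactly; the extra claim about $p$ would need a genericity assumption on $V$ or a different family of solutions.
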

        \begin{proof}We look for solutions of the form
                \begin{equation*}
                m_j^{d_1,d_2}(x)=\begin{cases}
                m^-_{j}(x),\ x\in [0,d_1) \cup [x_0,d_1),\\
                m^+_{j}(x),\ x\in [d_1,x_0)\cup [d_2,1),
                \end{cases}
                \end{equation*}
                where $0<d_1<x_0$ and $x_0<d_2<1$. Note that $m_j^{d_1,d_2}$ has two discontinuity points. At these points,  $m_j^{d_1,d_2}$
 has positive jumps. Hence, if we define
                \begin{equation*}
                u_j^{d_1,d_2}(x)=\int\limits_{0}^{x}\frac{jdy}{m_j^{d_1,d_2}(y)}-p_j^{d_1,d_2}x,\quad x \in \Tt,
                \end{equation*}
                where $p_j^{d_1,d_2}=\int\limits_{\Tt}\frac{jdy}{m_{d_1,d_2}(y)}$, the triplet $(u_j^{d_1,d_2},m_j^{d_1,d_2},\Hh_j^{cr})$ is a regular solution of \eqref{main} if \[\int\limits_{\Tt}m_j^{d_1,d_2}(x)dx=1.\]
                
                To determine $d_1$ and $d_2$, we consider the function
                \begin{align*}
                \phi(d_1,d_2)&=\int\limits_{0}^1 m_j^{d_1,d_2}(x)dx=\int\limits_{0}^{d_1} m^-_{j}(x)dx+\int\limits_{d_1}^{x_0} m^{+}_{j}(x)dx\\ \nonumber
                &+\int\limits_{x_0}^{d_2} m^-_{j}(x)dx+\int\limits_{d_2}^{1} m^{+}_{j}(x)dx, \quad (d_1,d_2) \in (0,x_0)\times (x_0,1).
                \end{align*}
                We have that $\phi(0,x_0)=\int\limits_{0}^1 m^{+}_{j}(x)dx>1$ and $\phi(x_0,1)=\int\limits_{0}^1 m^{-}_{j}(x)dx<1$. Because $\phi$ is continuous, there exists a pair, $(d_1,d_2)\in (0,x_0)\times (x_0,1),$ such that $\phi(d_1,d_2)=1$. In fact, there are infinitely many such pairs. For arbitrary continuous curve $\gamma \subset [0,x_0]\times [x_0,1]$ connecting the points $(0,x_0)$ and $(x_0,1)$, there exists at least one pair, $(d_1,d_2) \in \gamma,$ such that $\phi(d_1,d_2)=1$. To each such pair corresponds a triplet $(u_j^{d_1,d_2},m_j^{d_1,d_2},\Hh_j^{cr})$ that is a regular solution of \eqref{main}.
                \end{proof}

\begin{figure}
        \centering      
        \includegraphics[width=50mm]{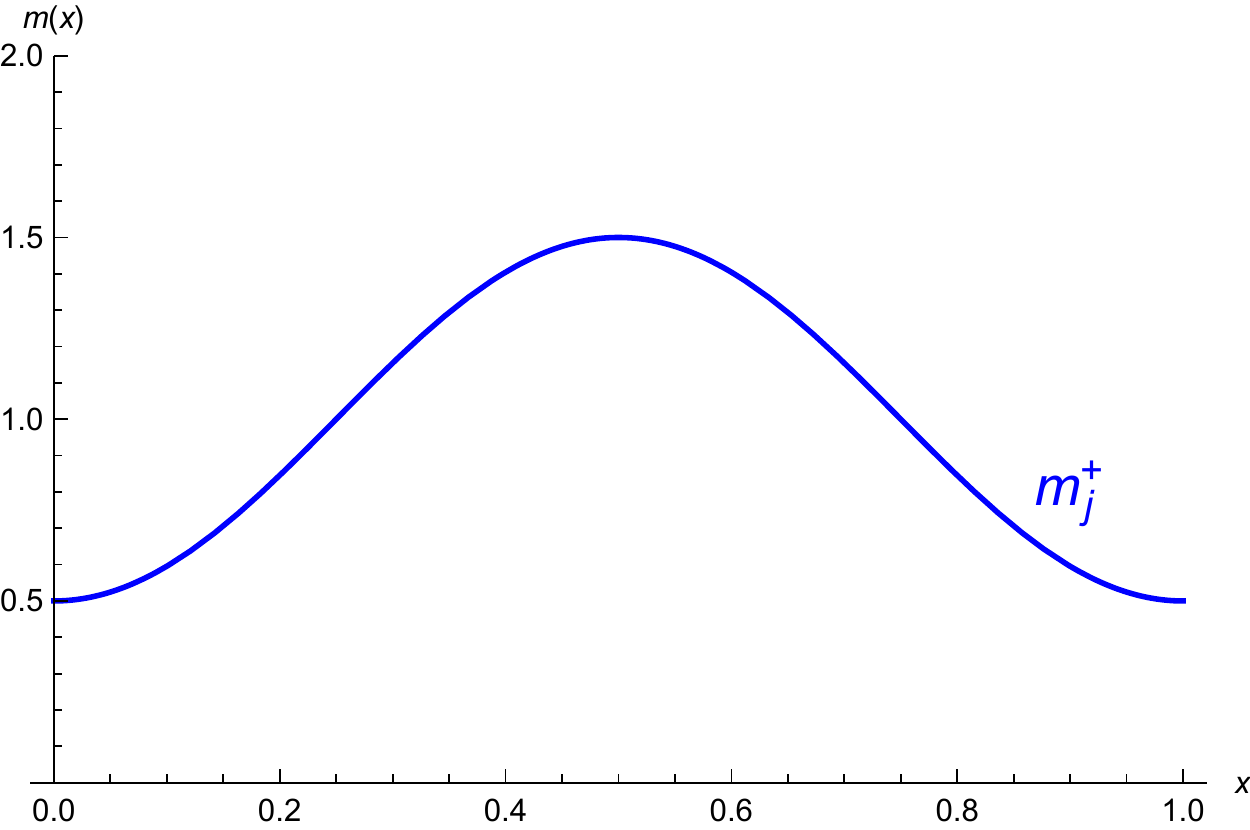}
        \caption{Solution $m$ for $j=0.001$ and $V(x)=\frac 1 2 \sin(2 \pi (x + 1/4))$.}
        \label{fig: casei}
\end{figure}

\begin{figure}
        \centering      
        \includegraphics[width=50mm]{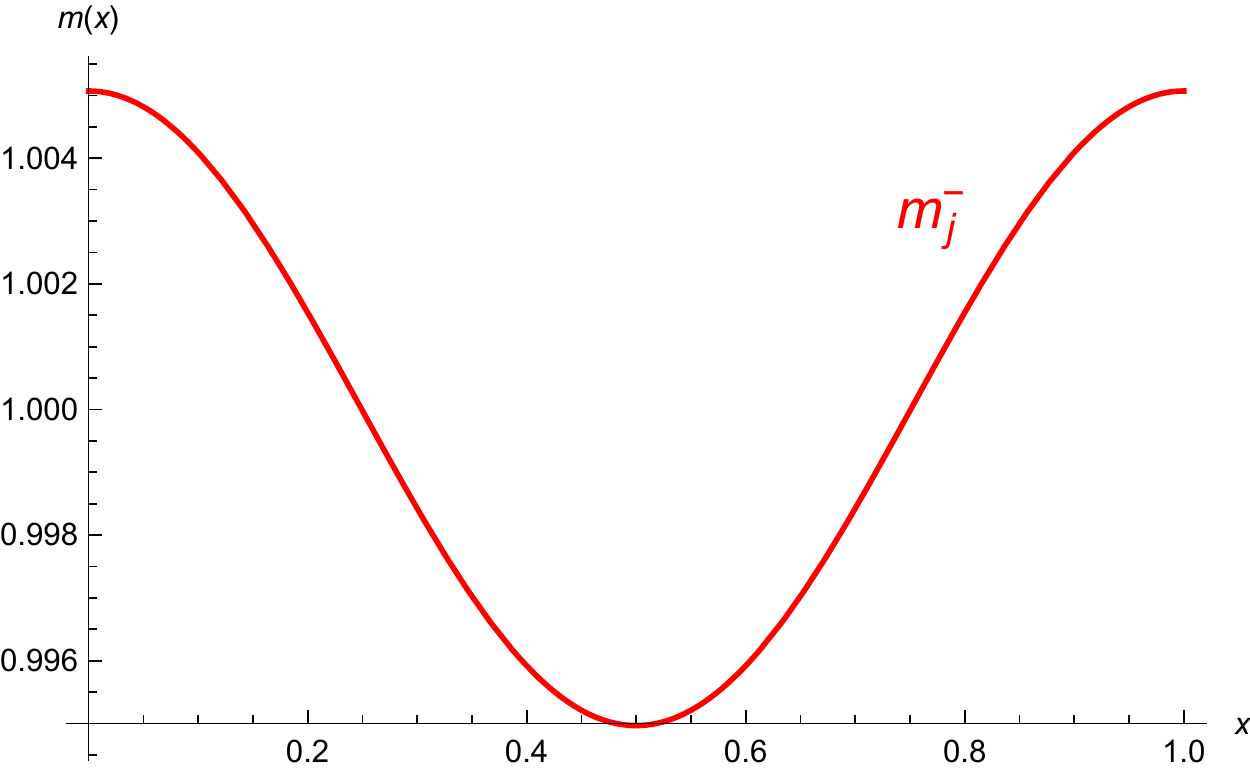}
        \caption{Solution $m$ for $j=10$ and $V(x)=\frac 1 2 \sin(2 \pi (x + 1/4))$.}
        \label{fig: caseii}
\end{figure}

\begin{figure}
        \centering      
        \includegraphics[width=50mm]{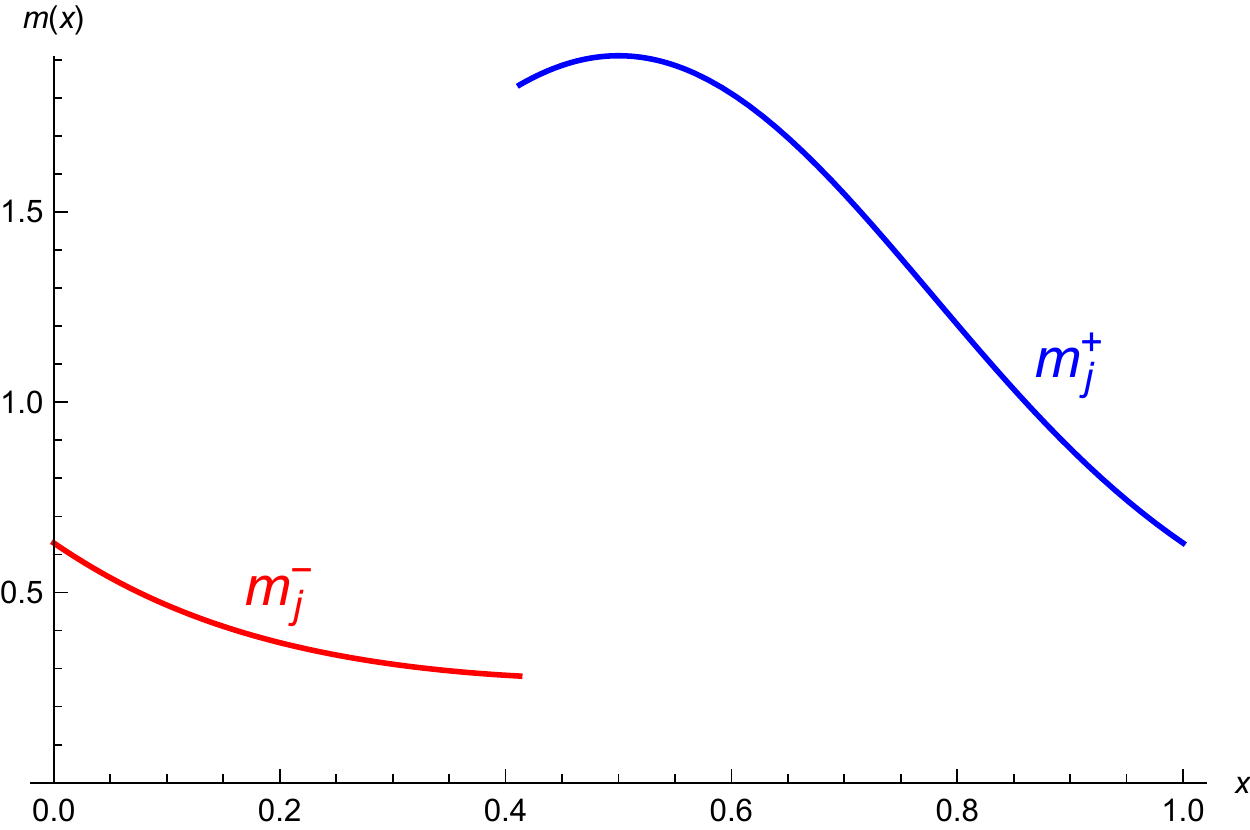}
        \caption{Solution $m_j$ for $j=0.5$ and $V(x)=\frac 1 2 \sin(2 \pi (x + 1/4))$.}
        \label{fig: caseiii}
\end{figure}

\begin{figure}
        \centering      
        \includegraphics[width=50mm]{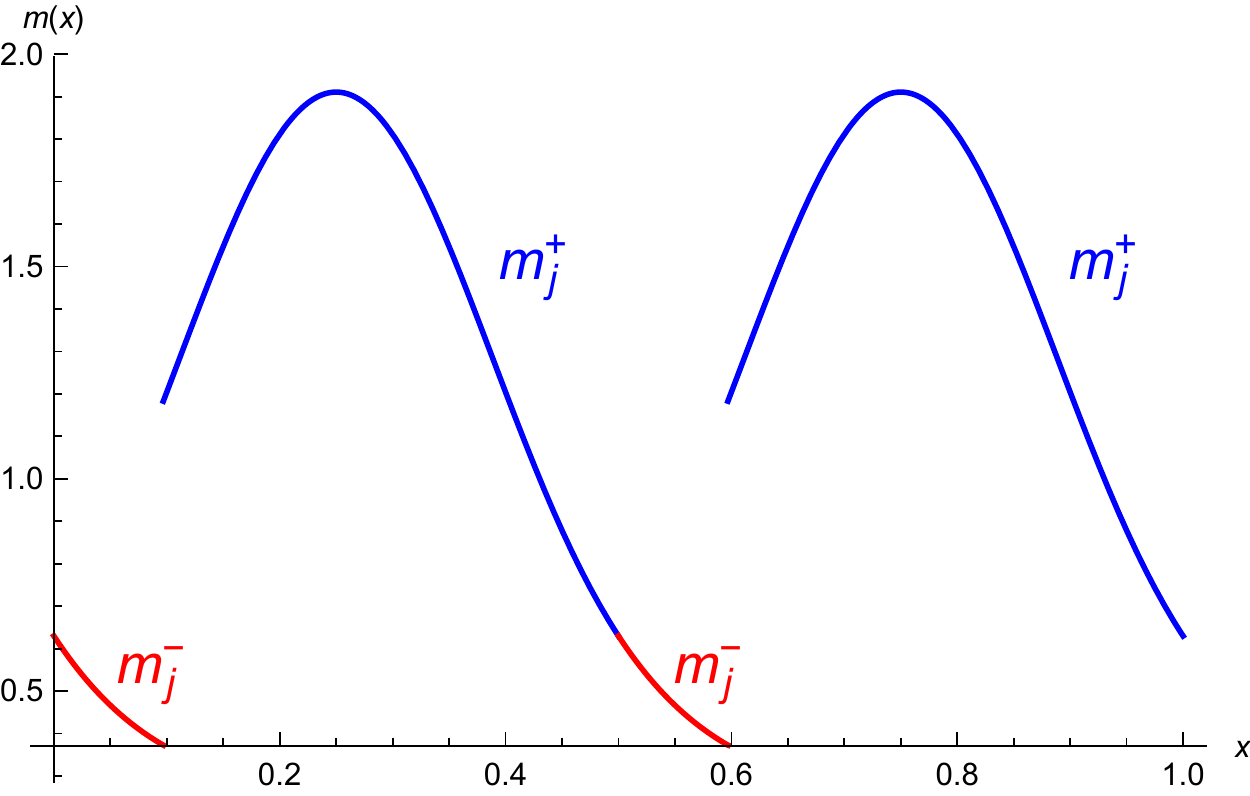}
        \mbox{ } \mbox{ }\mbox{ }\mbox{ }\mbox{ }\mbox{ }
        \includegraphics[width=50mm]{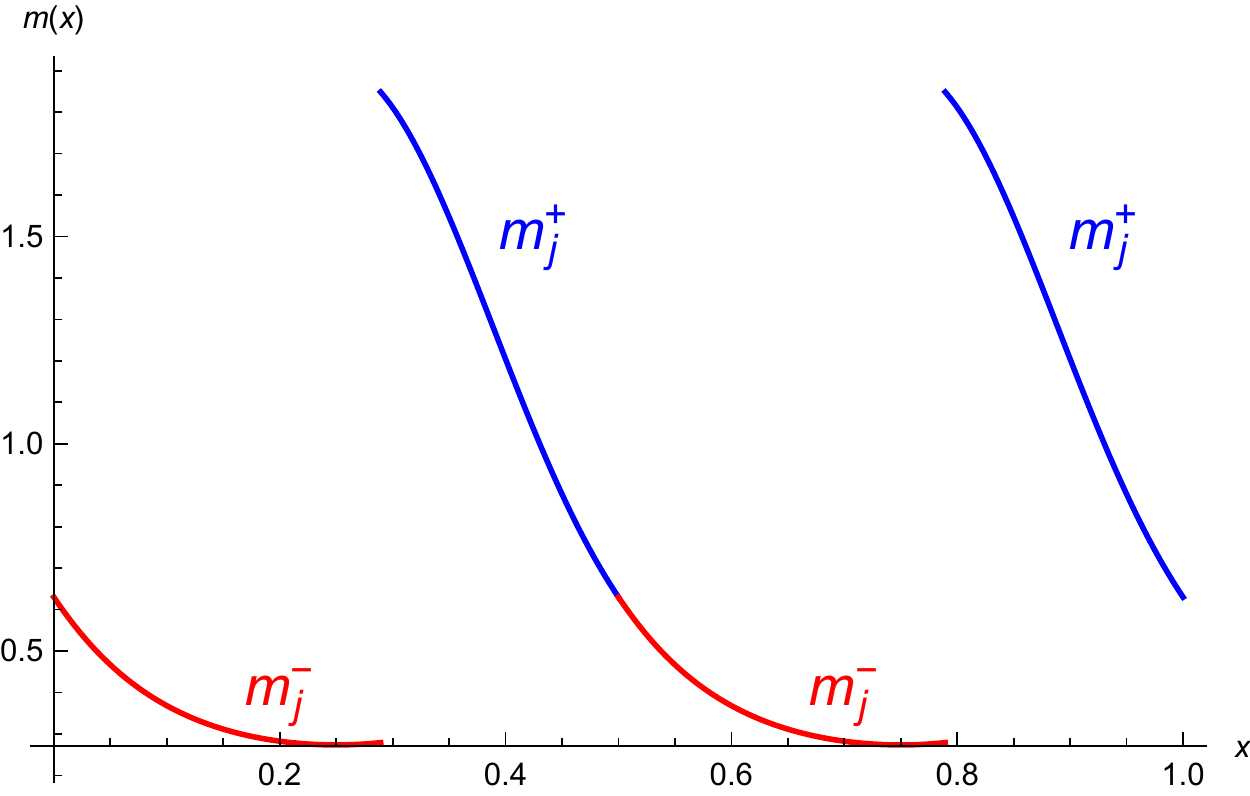}
        \caption{Two distinct solutions for $j=0.5$ and $V(x)=\frac 1 2 \sin(4 \pi (x + 1/8))$.}
        \label{fig: multimax}
\end{figure}

Let $V(x)=\frac{1}{2}\sin(2\pi(x+\frac{1}{4}))$. Because $V$ has a single maximum, Proposition \ref{prp: j>0} gives that \eqref{main} admits a unique regular solution for all values of $j>0$. In Figs. \ref{fig: casei}, \ref{fig: caseii}, \ref{fig: caseiii}, we plot $m$ for different values of $j$.
In Fig. \ref{fig: casei}, we plot $m$ in the low-current regime, $j=0.001;$ that is,  Case i in Proposition \ref{prp: j>0}. As we can see, $m$ is smooth as predicted by the proposition.
In Fig. \ref{fig: caseii}, we plot $m$ in the high-current regime, $j=10; $ that is,  Case ii in Proposition \ref{prp: j>0}. As before, we observe that $m$ is smooth.
Finally, 
in Fig. \ref{fig: caseiii}, we plot $m$ for the intermediate-current regime, $j=0.5$; that is,   Case iii in Proposition \ref{prp: j>0}. As we can see, $m$ is discontinuous.

Next, we consider the potential $V(x)=\frac{1}{2}\sin(4\pi(x+\frac{1}{8})) $ that has two maxima. By Proposition \ref{prp: V_multimax}, we have infinitely many two-jump solutions. In Fig. \ref{fig: multimax}, we plot two such solutions. 
        
\subsection{$j = 0$, $g$ decreasing}\label{sec: j=0}
        
        Now, we examine the case when the current vanishes, and, thus,  we consider the system
        \begin{equation}\label{eq: currentform_j=0}
        \begin{cases}
        \frac{(u_x+p)^2}{2}+m=\Hh-V(x);\\
        m \geq 0,\ \int\limits_{\Tt} m dx=1;\\
        m(u_x+p)=0.
        \end{cases}
        \end{equation}  
        Suppose that  \eqref{eq: currentform_j=0} has a solution.  Because $m \geq 0$, we have $\Hh-V(x)\geq~0$ for$\ x\in~\Tt$. Thus, $\Hh\geq \max\limits_{\Tt} V$. On the other hand,
        \[\int\limits_{\Tt} \left(\Hh-V(x)\right) dx \geq \int\limits_{\Tt} m dx=1.
        \]
        Consequently, $\Hh\geq 1+ \int\limits_{\Tt} V$. Therefore,
        \begin{equation*}
        \Hh\geq \max \left(\max\limits_{\Tt} V,1+\int \limits_{\Tt}V\right)=:\Hh_0.
        \end{equation*}
        It turns out that $\Hh_0$ is the only possible value for $\Hh$ as we show next.
        \begin{proposition}\label{prp: H_0}
                The MFG \eqref{eq: currentform_j=0} does not have regular solutions for $\Hh>\Hh_0$. 
        \end{proposition}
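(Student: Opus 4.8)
The plan is to argue by contradiction: assume a regular solution $(u,m,\Hh)$ of \eqref{eq: currentform_j=0} exists with $\Hh>\Hh_0$, and extract a forbidden (upward) jump of $u_x$. First I would record the pointwise structure forced by the third equation $m(u_x+p)=0$. On the set $A=\{m>0\}$ we must have $u_x+p=0$, and then the first equation gives $m=\Hh-V$; on the set $B=\{m=0\}$ the first equation gives $u_x+p=\pm\sqrt{2(\Hh-V)}$. Since $\Hh>\Hh_0\geq\max_\Tt V$, we have $\Hh-V(x)\geq\delta:=\Hh-\max_\Tt V>0$ for all $x$, so on $A$ the density $m=\Hh-V\geq\delta$ is bounded away from $0$, while on $B$ the quantity $|u_x+p|=\sqrt{2(\Hh-V)}\geq\sqrt{2\delta}$ is also bounded away from $0$. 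In particular, $m$ is discontinuous across every point separating $A$ and $B$.

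Next I would check that both $A$ and $B$ are nonempty (indeed of positive measure). Since $\int_\Tt m=1>0$, the set $A$ is nonempty. For $B$, I use the second part of the hypothesis, $\Hh>\Hh_0\geq 1+\int_\Tt V$: this gives $\int_\Tt(\Hh-V)\,dx=\Hh-\int_\Tt V>1=\int_\Tt m\,dx$, so $m<\Hh-V$ on a set of positive measure, which is exactly $B=\{m=0\}$ (as $m\in\{0,\Hh-V\}$ pointwise). Thus on the torus we genuinely alternate between $A$-pieces and $B$-pieces, and every maximal component of $B$ is flanked by $A$ on both sides.

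The heart of the argument is the sign bookkeeping at the interfaces, using condition b (only negative jumps of $u_x$). At a point $x_0$ where we pass from $A$ into $B$, the left value is $u_x+p=0$ and the right value is $-p+\sigma\sqrt{2(\Hh-V)}$; condition b, $u_x(x_0^-)\geq u_x(x_0^+)$, forces $\sigma=-1$, so $u_x+p=-\sqrt{2(\Hh-V)}<0$ just inside $B$. Symmetrically, at a point where we pass from $B$ back into $A$, condition b forces $u_x+p=+\sqrt{2(\Hh-V)}>0$ just inside $B$. Hence along any maximal component of $B$ the function $u_x+p$ starts strictly negative and ends strictly positive; since on $B$ it only takes the values $\pm\sqrt{2(\Hh-V)}$, which are separated by the gap $2\sqrt{2\delta}>0$ and never vanish, it cannot pass continuously from the negative to the positive branch and must do so through an upward jump of $u_x$. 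This contradicts condition b, and therefore no regular solution exists when $\Hh>\Hh_0$.

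The step I expect to require the most care is the last one: turning ``$u_x+p$ goes from negative to positive on $B$, hence jumps up'' into a rigorous statement. For this I would invoke the regularity built into the notion of a regular solution (piecewise $C^1$, so that $A$ and $B$ are finite unions of intervals and $u_x$ has finitely many jumps), reducing the claim to a finite sign-crossing count: along a $B$-interval the number of $-\to+$ switches exceeds the number of $+\to-$ switches by one, and at least one $-\to+$ switch is an upward jump. The two global hypotheses $\Hh>\max_\Tt V$ and $\Hh>1+\int_\Tt V$ are both used---the former to guarantee the nonvanishing gap on each branch, the latter to guarantee $B\neq\emptyset$---so the argument is tight against the definition of $\Hh_0$.
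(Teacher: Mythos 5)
Your proposal is correct and takes essentially the same route as the paper's proof: both reduce to the pointwise trichotomy $u_x+p\in\{0,\pm\sqrt{2(\Hh-V)}\}$, use $\Hh>1+\int_\Tt V$ to show $\{m=0\}$ has positive measure and $\Hh>\max_\Tt V$ to keep the square root bounded away from zero, and then extract a forbidden positive jump of $u_x$. The only difference is bookkeeping --- you track signs at the endpoints of each maximal component of $\{m=0\}$, while the paper first eliminates the $-\sqrt{2(\Hh-V)}$ branch via a supremum argument and then rules out the remaining $0\to+\sqrt{2(\Hh-V)}$ transition --- which is an equivalent organization of the same argument.
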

        \begin{proof}
                Suppose that $\Hh>\Hh_0$ and that the triplet $(u,m,\Hh)$ is a regular solution of \eqref{eq: currentform_j=0}.
If $m(x)>0$, then $u_x(x)+p=0$ and $m=\Hh-V(x)$.  If $m(x)=0$, then $$(u_x(x)+p)^2=2(\Hh-V(x)).$$ Thus, on the set 
$Z=\{x:m(x)=0\}$,
$$u_x(x)+p=\sqrt{2(\Hh-V(x))}\ \text{or}\ u_x(x)+p=-\sqrt{2(\Hh-V(x))}.$$ We have that $\int\limits_{\Tt} \left(\Hh-V(x)\right) dx>1$. Hence, the set $Z$ has a positive Lebesgue measure. Otherwise, $m(x)=\Hh-V(x)$ everywhere, and thus $\int \limits_{\Tt}m(x)dx>1$. Consequently, $u_x+p$ is either $\sqrt{2(\Hh-V(x))}$ or $-\sqrt{2(\Hh-V(x))}$ on $Z$.
Suppose that $u_x(x)+p$ takes the value $-\sqrt{2(\Hh-V(x))}$ at some point $x \in \Tt$. Without loss of generality, we can assume that $u_x(0)+p=-\sqrt{2(\Hh-V(0))}$. Let \[e=\sup \left\{x \in (0,1)\ \text{s.t.}\  u_x(x)+p=-\sqrt{2(\Hh-V(x))}\right\}.\]
                
                Then, at  $x=e$, the function $u_x+p$ has a jump of size $\sqrt{2(\Hh-V(e))}$ or $2\sqrt{2(\Hh-V(e))}$. However, this is impossible because $u_x$ is a regular solution, and it cannot have positive jumps.
Therefore, $u_x(x)+p$ takes only the values $\sqrt{2(\Hh-V(x))}$ and $0$. But then, $u_x$ must have a positive jump from  $0$ to  $\sqrt{2(\Hh-V(x))}$ at some point, which  also contradicts the regularity property.
                \end{proof}

        Now, we construct solutions to \eqref{eq: currentform_j=0} with $\Hh=\Hh_{0}$. It turns out that if $V$ has a large oscillation, then \eqref{eq: currentform_j=0} has infinitely many regular solutions.
        \begin{proposition}\label{prp: j=0} We have that
                \begin{itemize}
                        \item[i.] if $1+\int \limits_{\Tt}V \geq \max\limits_{\Tt} V$, then the triplet $(u_0,m_0,\Hh_0)$ with
                         \begin{equation}\label{eq: sols_i_j=0}
                                m_0(x)=\Hh_0-V(x),\ u_0(x)=0,
                        \end{equation}
                        solves \eqref{eq: currentform_j=0} in the classical sense for $p=0$;
                        \item[ii.] if $\max\limits_{\Tt} V > 1+\int \limits_{\Tt}V$, define
                        \begin{equation}\label{eq: m_d1d2}
                        m_0^{d_1,d_2}(x)=\begin{cases}
                        \Hh_{0}-V(x),\ x\in [d_1,d_2] ,\\
                        0,\ x\in \Tt \setminus [d_1,d_2],
                        \end{cases}
                        \end{equation}
                        and
                        \begin{equation}\label{eq: u_d1d2}
                        u_0^{d_1,d_2}(x)=\int\limits_{0}^{x} (u_0^{d_1,d_2})_x(y) dy,\quad x\in \Tt,
                        \end{equation}
                        where 
                        \begin{equation*}
                        (u_0^{d_1,d_2})_x(x)=\begin{cases}
                        \sqrt{2(\Hh_{0}-V(x))}-p_0^{d_1,d_2},\ x\in [0,d_1) ,\\
                        -p_0^{d_1,d_2},\ x\in [d_1,d_2],\\
                        -\sqrt{2(\Hh_{0}-V(x))}-p_0^{d_1,d_2},\ x\in (d_2,1],
                        \end{cases}
                        \end{equation*}
                        and $p_0^{d_1,d_2}=\int\limits_{0}^{d_1} \sqrt{2(\Hh_{0}-V(x))}dx-\int\limits_{d_2}^{1} \sqrt{2(\Hh_{0}-V(x))}dx$. Then, for any pair, $(d_1,d_2)$, such that
                        \begin{equation}\label{eq: int_of_H0}
                                \int\limits_{d_1}^{d_2} (\Hh_{0}-V(x))dx=1,
                        \end{equation}
                        the triplet $(u_0^{d_1,d_2},m_0^{d_1,d_2},\Hh_0)$ is a regular solution for \eqref{eq: currentform_j=0} for $p=p_0^{d_1,d_2}$. Furthermore, there exist infinitely many pairs, $(d_1,d_2)$, such that \eqref{eq: int_of_H0} holds.
                \end{itemize}
        \end{proposition}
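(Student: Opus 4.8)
The plan is to verify directly that the proposed triplets meet the three requirements in the definition of a solution together with the regularity conditions a.\ and b., and then to supply a counting argument for case ii. Throughout I use that $\Hh_0-V\geq 0$ on $\Tt$ and, following the running convention, that $V$ attains its maximum at $x=0$, so that in case ii (where $\Hh_0=\max_\Tt V$) one has $\Hh_0-V(0)=0$. For case i the hypothesis $1+\int_\Tt V\geq \max_\Tt V$ forces $\Hh_0=1+\int_\Tt V$; hence $m_0=\Hh_0-V\geq \Hh_0-\max_\Tt V\geq 0$ and $\int_\Tt m_0=\Hh_0-\int_\Tt V=1$. Taking $u_0\equiv 0$ and $p=0$, the first equation of \eqref{eq: currentform_j=0} reduces to $m_0=\Hh_0-V$, which holds by definition, while $m_0(u_0)_x=0$ is immediate; since $V\in C^\infty$ the pair is smooth and the solution is classical.

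For case ii I would first check the three defining properties for a fixed admissible pair $(d_1,d_2)$. By construction $m_0^{d_1,d_2}\geq 0$, and the mass constraint is exactly \eqref{eq: int_of_H0}. Substituting the piecewise formula for $(u_0^{d_1,d_2})_x$ into the Hamilton--Jacobi equation, one gets $(u_0^{d_1,d_2})_x+p=0$ where $m=\Hh_0-V$ and $(u_0^{d_1,d_2})_x+p=\pm\sqrt{2(\Hh_0-V)}$ where $m=0$, so the first equation holds almost everywhere and the complementarity $m(u_x+p)=0$ holds on each piece. The definition of $p_0^{d_1,d_2}$ is precisely the value making $\int_0^1 (u_0^{d_1,d_2})_x\,dx=0$, whence $u_0^{d_1,d_2}$ is periodic.

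The crux is condition b.: every jump of $(u_0^{d_1,d_2})_x$ must satisfy $\lim_{x\to x_0^-}u_x\geq\lim_{x\to x_0^+}u_x$. At $x=d_1$ the derivative drops from $\sqrt{2(\Hh_0-V(d_1))}-p_0^{d_1,d_2}$ to $-p_0^{d_1,d_2}$, and at $x=d_2$ from $-p_0^{d_1,d_2}$ to $-\sqrt{2(\Hh_0-V(d_2))}-p_0^{d_1,d_2}$; both are admissible (non-positive) jumps since the square roots are nonnegative. At the periodic point $x=0$ the one-sided limits are $-\sqrt{2(\Hh_0-V(0))}-p_0^{d_1,d_2}$ and $\sqrt{2(\Hh_0-V(0))}-p_0^{d_1,d_2}$, which would be a forbidden positive jump in general, but it vanishes because $\Hh_0-V(0)=0$ in case ii, so $(u_0^{d_1,d_2})_x$ is continuous there. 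I expect this to be the only real subtlety: an admissible solution must route the sign change of $u_x+p$ through the maximum of $V$, where the eikonal speed $\sqrt{2(\Hh_0-V)}$ degenerates to zero, and it is this geometric fact that reconciles conditions a.\ and b. With the equation holding away from $d_1,d_2$ (where $u$ is $C^1$ and $m$ continuous), the triplet is a regular solution.

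Finally, to obtain infinitely many pairs I would analyze $\psi(d_1,d_2):=\int_{d_1}^{d_2}(\Hh_0-V)$ on $\{0<d_1<d_2<1\}$. It is smooth, with $\partial_{d_2}\psi=\Hh_0-V(d_2)>0$ away from maxima of $V$, so $\nabla\psi\neq 0$ and $\{\psi=1\}$ is locally a curve. Nonemptiness uses the case-ii inequality: as $d_1\to 0^+$ one has $\psi(d_1,1^-)\to\int_\Tt(\Hh_0-V)=\max_\Tt V-\int_\Tt V>1$, while $\psi(d_1,d_1^+)=0$, so for each small $d_1$ the intermediate value theorem produces $d_2(d_1)\in(d_1,1)$ with $\psi(d_1,d_2(d_1))=1$. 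Distinct values of $d_1$ give distinct pairs $(d_1,d_2)$, hence infinitely many regular solutions, completing case ii.
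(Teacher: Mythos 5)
Your proof is correct and follows essentially the same route as the paper's: direct verification of the mass and complementarity constraints, the observation that all jumps of $(u_0^{d_1,d_2})_x$ are negative (including the vanishing jump at the periodic point $x=0$, which works precisely because $\Hh_0=\max_\Tt V=V(0)$ in case ii), and the fact that $\int_\Tt(\Hh_0-V)>1$ yields infinitely many admissible pairs. The paper states these points tersely; your intermediate-value argument for the pairs $(d_1,d_2)$ and the explicit check at $x=0$ are just the natural fleshing-out of what the paper leaves implicit.
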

        \begin{proof}
                \textbf{Case i.} In this case, $\Hh_0=1+\int\limits_{\Tt} V(x)dx$ and straightforward computations show that \eqref{eq: sols_i_j=0} defines a classical solution of \eqref{eq: currentform_j=0}.
                
                \textbf{Case ii.} In this case, we have that $\Hh_{0}=\max\limits_{\Tt} V$ and that $\int\limits_{0}^{1} (\Hh_{0}-V(x))dx>1$. Without loss of generality, we assume that $0$ is a point of maximum for $V$.
                
                Note that $(u_0^{d_1,d_2})_x$ has only negative jumps and $u_0^{d_1,d_2}$ satisfies \eqref{eq: currentform_j=0} almost everywhere. Thus, the triplet $(u_0^{d_1,d_2},m_0^{d_1,d_2},\Hh_0)$ is a regular solution of \eqref{eq: currentform_j=0} if $\int\limits_{\Tt}m_0^{d_1,d_2}(x)dx=1$. However, the latter is equivalent to \eqref{eq: int_of_H0}. Since $\int\limits_{0}^{1} (\Hh_{0}-V(x))dx>1,$ we can find infinitely many such pairs. We find $p_0^{d_1,d_2}$ from the identity $\int\limits_{\Tt} (u_0^{d_1,d_2})_x(x)dx=0$.
        \end{proof}
        
       Figs. \ref{pic: m0} and \ref{pic: u0} show the solutions of \eqref{eq: currentform_j=0} for $V(x)=5\sin(2\pi(x+\frac 1 4) ),\ x \in \Tt$.

        \begin{figure}
                \centering      
                \includegraphics[width=50mm]{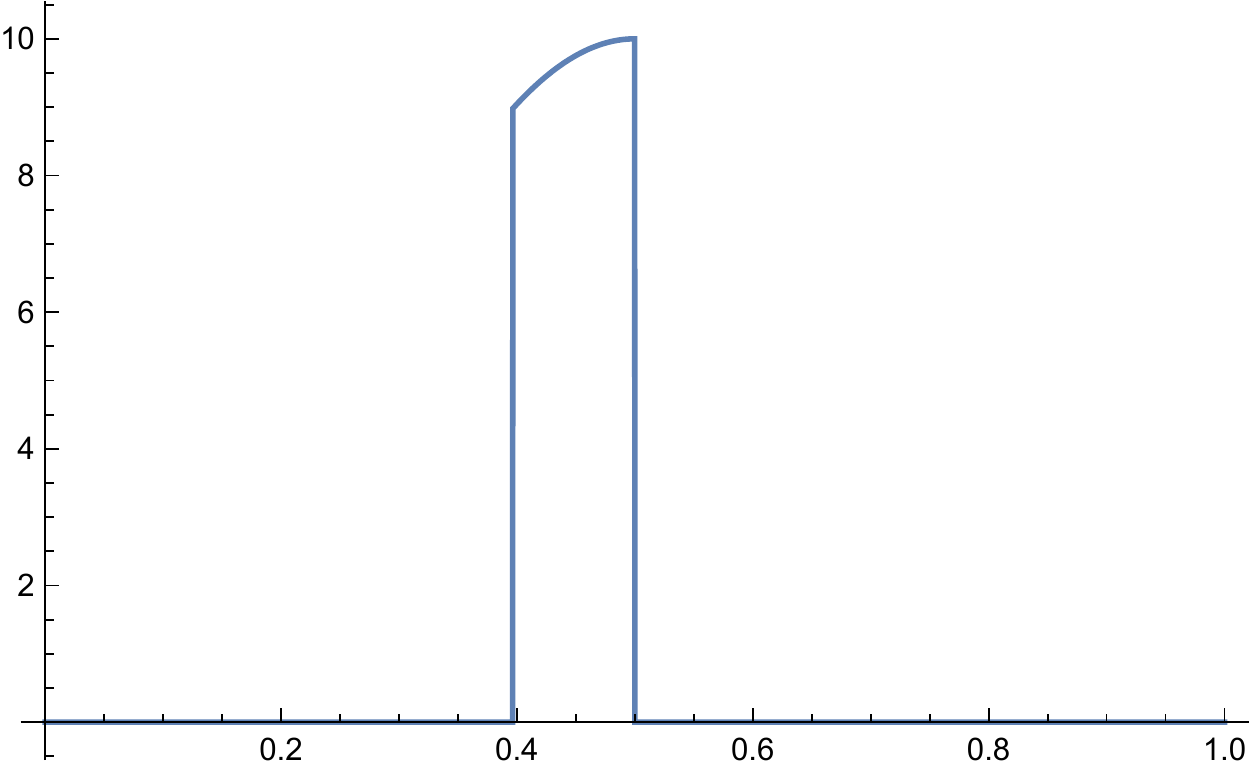}
                \caption{$m_0$ as defined in \eqref{eq: m_d1d2} for $V(x)=5 \sin(2 \pi (x+\frac 1 4))$ with $d_2=0.5$ and $d_1$ such that \eqref{eq: int_of_H0} holds. }
                \label{pic: m0}
        \end{figure}

        \begin{figure}
                \centering      
                \includegraphics[width=50mm]{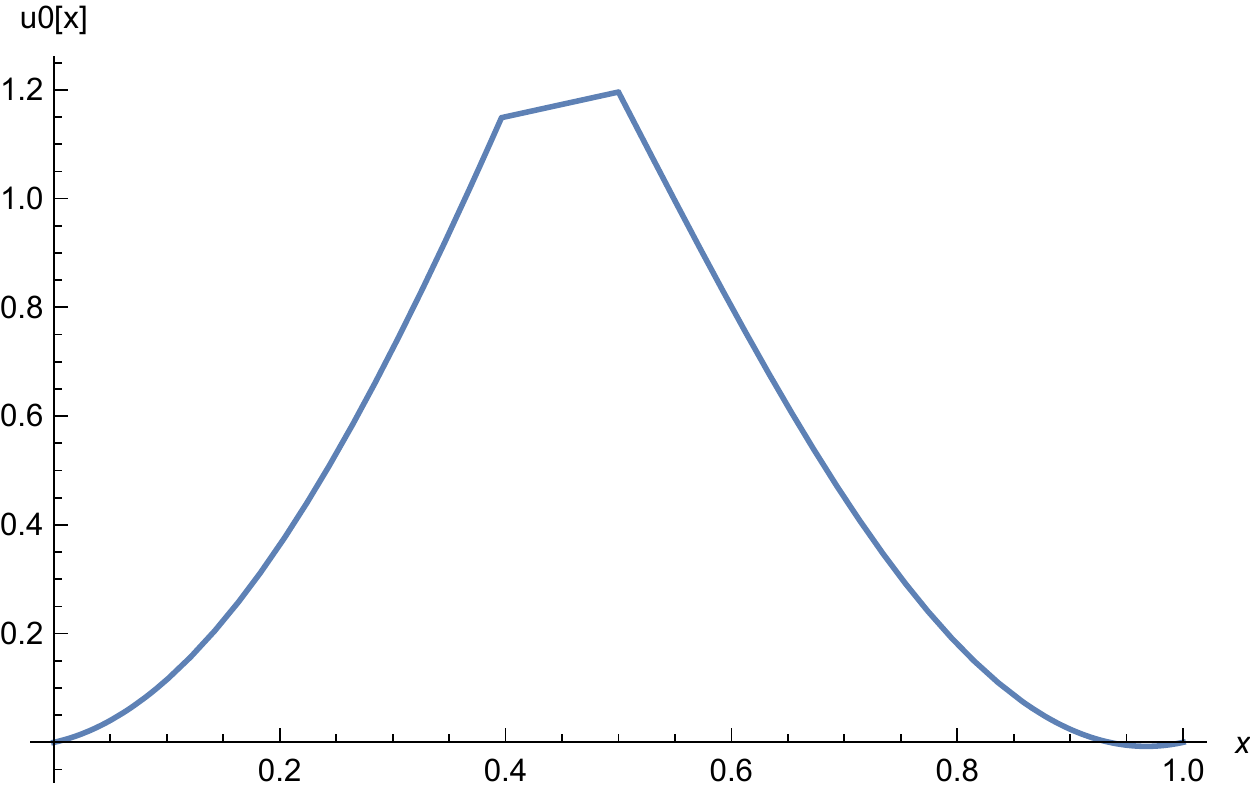}
                \includegraphics[width=50mm]{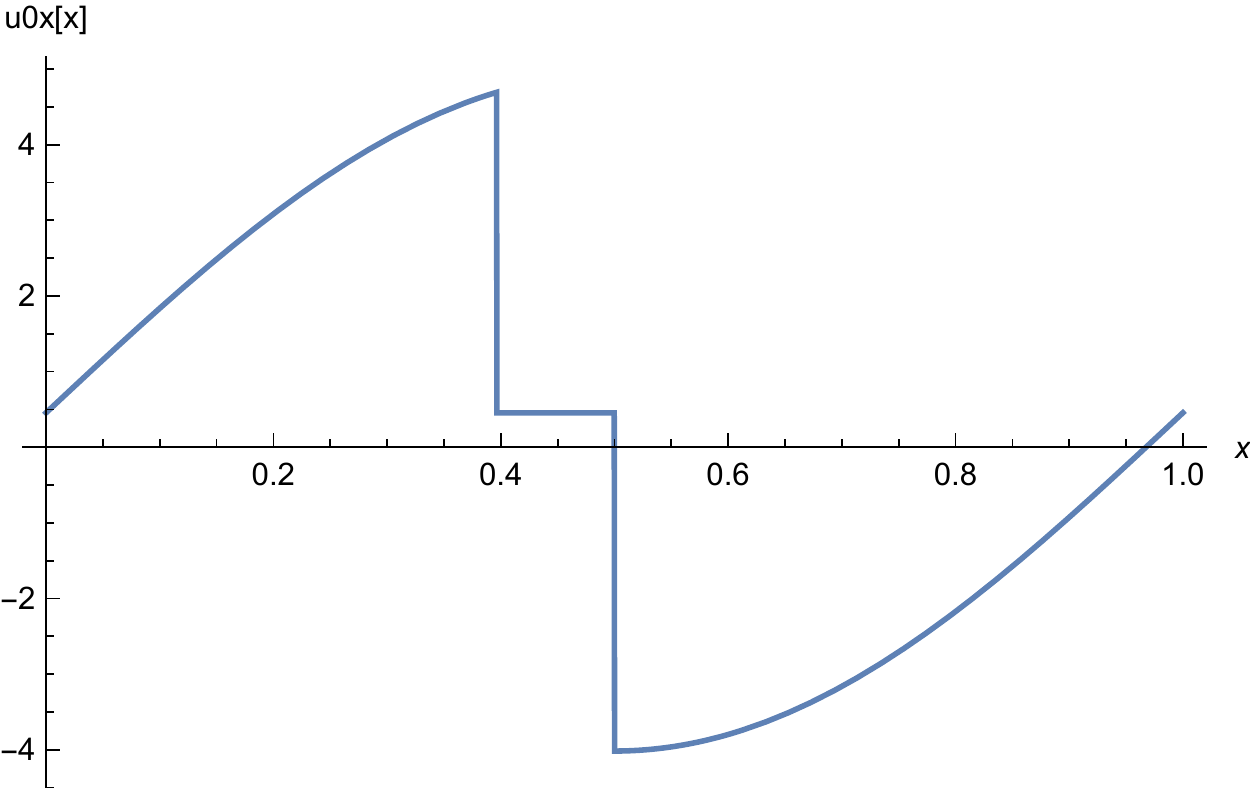}
                \caption{$u_0$ (left) and $(u_0)_x$ (right) as defined in \eqref{eq: m_d1d2} for $V(x)=5 \sin(2 \pi (x+\frac 1 4))$ with $d_2=0.5$ and $d_1$ such that \eqref{eq: int_of_H0} holds.}
                \label{pic: u0}
        \end{figure}

        \begin{remark}\label{rmrk: V_multimax_j=0}
        If $V$ has multiple maxima and Case \textit{ii} in Proposition \ref{prp: j=0} holds,  there is a larger family of solutions. Let $x=x_0 \in (0,1) $ be a point of maximum for $V$. For fixed real numbers,  $d_1<d_2<e_1<e_2$, define
        \begin{equation}\label{eq: m_d1d2e1e2}
        m_0^{d_1,d_2,e_1,e_2}(x)=\begin{cases}
        \Hh_{0}-V(x),\ x\in [d_1,d_2]\cup [e_1,e_2],\\
        0,\ \text{elsewhere}, 
        \end{cases}
        \end{equation}
        and
        \begin{equation}\label{eq: u_d1d2e1e2}
        u_0^{d_1,d_2,e_1,e_2}(x)=\int\limits_{0}^{x} (u_0^{d_1,d_2,e_1,e_2})_x(y) dy,\quad x\in \Tt,
        \end{equation}
        where 
        \begin{equation*}
        (u_{d_1,d_2,e_1,e_2})_x(x)=\begin{cases}
        \sqrt{2(\Hh_{0}-V(x))}-p_0^{d_1,d_2,e_1,e_2},\ x\in [0,d_1) \cup [x_0,e_1),\\
        -\sqrt{2(\Hh_{0}-V(x))}-p_0^{d_1,d_2,e_1,e_2},\ x\in (d_2,x_0] \cup (e_2,1],\\
        -p_0^{d_1,d_2,e_1,e_2},\ \text{elsewhere}.
        \end{cases}
        \end{equation*}
        Note that $(u_{d_1,d_2,e_1,e_2})_x(x)$ is periodic, only has negative jumps, and solves \eqref{eq: currentform_j=0} almost everywhere. Hence, the triplet $(u_0^{d_1,d_2,e_1,e_2},m_0^{d_1,d_2,e_1,e_2},\Hh_{0})$ is a regular solution of \eqref{eq: currentform_j=0} if
        \begin{equation}\label{eq: m}
        \int\limits_{0}^{1} m_{d_1,d_2,e_1,e_2}(x) dx=1
        \end{equation}
        for
        \begin{align*}
        p_0^{d_1,d_2,e_1,e_2}=\int\limits_{[0,d_1) \cup [x_0,e_1)} \sqrt{2(\Hh_{0}-V(x))}dx-\int\limits_{(d_2,x_0] \cup (e_2,1]} \sqrt{2(\Hh_{0}-V(x))}dx.
        \end{align*}
        The equality \eqref{eq: m} is equivalent to
        \begin{equation}\label{eq: int_of_H0_manymax}
        \int\limits_{d_1}^{d_2} (\Hh_{0}-V(x))dx+\int\limits_{e_1}^{e_2} (\Hh_{0}-V(x))dx=1.
        \end{equation}
        Since $\int\limits_{0}^{1} (\Hh_{0}-V(x))dx>1$, we can find infinitely many quadruples $(d_1,d_2,e_1,e_2)$ such that \eqref{eq: int_of_H0_manymax} holds. Hence, we can generate infinitely many solutions of the form \eqref{eq: m_d1d2e1e2}, \eqref{eq: u_d1d2e1e2}.  
        \end{remark}
        
        From Propositions \ref{prp: j>0} and \ref{prp: j=0},  for every regular solution, $m,$ of \eqref{main} in the low-current regime ($j=0$ or Case i in Proposition \ref{prp: j>0}), the smaller $V(x)$ is, the larger  $m(x) $ is.  This is paradoxical because
$V(x)$ represents the
spatial preference of the agents and preferred regions correspond to high values of $V$. 
Thus, areas that are less desirable  have a high populational density. Therefore, it is possible that the most preferred site is empty and agents aggregate at the least preferred site. For example, in \eqref{eq: m_d1d2}, $m$ vanishes near the maximum of $V$ and is supported in the neighborhood of the minimum of $V$, as illustrated in Fig. \ref{pic: m0}. Hence, if agents do not move fast (low current), they prefer staying together rather than being in a better place, see Fig. \ref{fig: casei}. In the high-current regime (Case ii in Proposition \ref{prp: j>0}), the opposite situation occurs: the larger $V(x)$ is, the larger $m(x)$ becomes, see Fig. \ref{fig: caseii}. Therefore, preferred areas have a high population density. Hence, if the level of the current is high enough (we give quantitative estimates in the next section), agents are better off at preferred sites and with more agents.
               Finally, for the intermediate current level (Case iii in Proposition \ref{prp: j>0}), we observe a more complex situation. The solution, $m,$ consists of two parts: $m^-_j(x)$ and $m^+_j(x)$. $m^-_j(x)$ is larger where $V(x)$ is larger and the opposite holds for $m^+_j(x)$. Therefore, in the region where $m$ is $m^-_j$, the most preferred sites are more densely populated. In the region where $m$ is $m^+_j$ the less preferred sites are more densely populated.
This is illustrated in Fig. \ref{fig: caseiii}.

\section{Discontinuous viscosity solutions}
\label{discsec}

In the anti-monotone case considered in the preceding section,  $m$ can be discontinuous. Thus, in addition to regular solutions examined before, we need to consider viscosity solutions in the framework of discontinuous Hamiltonians. In what follows, we recall the main definitions in \cite{Barlesbook}. Given a locally bounded function, $F:\Tt\times\Rr\to \Rr$, we define its lower and upper
semicontinuous envelopes as
\[
F_*(x,q)=\liminf\limits_{(y,r)\to (x,q)} F(y,r), \quad F^*(x,q)=\limsup\limits_{(y,r)\to
(x,q)} F(y,r)
\]
for $(x,q)\in\Tt\times\Rr.$ We say that a locally bounded function,  $u:\Tt\to \Rr$, is a viscosity solution of $F(x, Du)=0$ if, for any smooth function, $\phi:\Tt
\to \Rr,$ we have that
\[F_*(x,\phi_x+p)\leq 0\quad \text{for all}\quad x\in \argmax(u-\phi)
\]
and
\[F^*(x,\phi_x+p)\geq 0\quad \text{for all}\quad x\in \argmin(u-\phi).
\]Let $m:\Tt\to \Rr$, $m\in L^\infty(\Tt)$, and set \[
m_*(x)=\liminf\limits_{y\to x} m(y), \quad m^*(x)=\limsup\limits_{y\to
x} m(y).
\]
Suppose that $V:\Tt\to \Rr$ is continuous. Then, for our setting, we have
\[
 F(x,q)=\frac{q^2}{2}+V(x)+m(x)-\Hh.  
\]
Consequently,
\[F_*(x,q)=\frac{q^2}{2}+V(x)+m_*(x)-\Hh, \quad F^*(x,q)=\frac{q^2}{2}+V(x)+m^*(x)-\Hh.
\]
Here, we look for piecewise smooth solutions of \eqref{main} for $g(m)=-m$ that are not necessarily regular; that is, the condition $\lim\limits_{x\to x^-}u_x(x)\geq \lim\limits_{x\to x^+}u_x(x)$ is not necessarily satisfied. It turns out that there are infinitely many such solutions for all $j\neq 0$ independent of properties of $V$, and the jump direction of $u_x$ is irrelevant. This contrasts with the fact that for $V$ with a single maximum, there exists just one regular solution (Proposition \ref{prp: j>0}).

Thus, we select a current level, $j>0$ ($j<0$ is analogous), and fix arbitrary points $0\leq x_0 <x_1 <\cdots<x_n\leq 1$ and $\Hh \geq \Hh^{cr}_j$. We search for solutions $(u,m,\Hh)$ such that $m$ is continuous on the intervals $(x_i,x_{i+1})$ for $0\leq i\leq n-1$. From the above discussion, we have:
\begin{proposition}
        Assume that $j>0$ and that 
        \begin{itemize}
                \item $m>0$ is continuous on $(x_i,x_{i+1})$ and
                \begin{equation*}
                        \frac{j^2}{2m(x)^2}+m(x)=\Hh-V(x)\ \text{for all}\ x\neq x_i.
                \end{equation*}
                \item $\Hh$ is such that $\int\limits_{\Tt} m(x) dx=1$.
        \end{itemize}
Then, the triplet $(u,m,\Hh)$ solves \eqref{main}, where
        \[u(x)=\int\limits_{0}^{x}\frac{j}{m(y)}dy-px,\quad p=\int\limits_{\Tt}\frac{j}{m(y)}dy.
        \]
\end{proposition}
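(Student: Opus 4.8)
The plan is to verify directly the three defining properties of a solution of \eqref{main} for $g(m)=-m$ — that $m$ is an admissible density, that the transport equation holds distributionally, and that $u$ is a discontinuous viscosity solution of the Hamilton--Jacobi equation — exploiting that for $j>0$ the current formulation forces $u_x+p=j/m$ with $m$ bounded away from zero. First I would record the elementary consequences of the hypotheses. Since $\Hh\geq \Hh^{cr}_j$, the algebraic relation $\frac{j^2}{2m^2}+m=\Hh-V(x)$ is solvable, and from $\frac{j^2}{2m^2}\leq \Hh-\min_\Tt V$ one gets $m\geq j/\sqrt{2(\Hh-\min_\Tt V)}>0$; hence $1/m\in L^\infty(\Tt)$ and $u(x)=\int_0^x \frac{j}{m}\,dy-px$ is Lipschitz with $u_x+p=j/m$ almost everywhere. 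The normalization $p=\int_\Tt \frac{j}{m}\,dy$ is exactly the condition $u(1)=u(0)$, so $u$ is periodic.

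The density condition ii.\ is immediate from $m>0$ and the assumption $\int_\Tt m=1$. For the transport equation iii., note that $m(u_x+p)=j$ is constant almost everywhere; since $j\in L^\infty(\Tt)$, its distributional derivative vanishes, because $\int_\Tt j\,\phi_x\,dx=j\int_\Tt \phi_x\,dx=0$ for every periodic test function $\phi$. Thus $m$ is a weak solution of the second equation in \eqref{main}.

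The core of the argument is the viscosity verification i., and the key observation is that $q:=u_x+p=j/m>0$, so both envelopes $q\mapsto F_*(x,q)$ and $q\mapsto F^*(x,q)$ are strictly increasing over the range of slopes that actually occur. At any continuity point $x\neq x_i$, $u$ is $C^1$, one has $m_*(x)=m^*(x)=m(x)$, and the algebraic equation gives $F_*(x,u_x+p)=F^*(x,u_x+p)=0$, so the sub- and supersolution tests both hold with equality. At a jump point $z=x_i$, continuity of $V$ shows that the one-sided limits $m(z^\pm)$ each satisfy the algebraic equation at $z$, hence are the two roots $m^\pm_\Hh(z)$. I would then split on the direction of the jump of $u_x$: a max of $u-\phi$ can only occur at a concave corner, where $u_x$ jumps down, i.e.\ $m$ jumps up, so $m_*(z)$ is the smaller root and the admissible slopes satisfy $q\leq j/m_*(z)$; monotonicity together with $F_*(z,j/m_*(z))=0$ then yields $F_*(z,q)\leq 0$. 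Symmetrically, a min of $u-\phi$ can only occur at a convex corner, where $m$ jumps down, $m^*(z)$ is the larger root, the admissible slopes satisfy $q\geq j/m^*(z)$, and $F^*(z,j/m^*(z))=0$ gives $F^*(z,q)\geq 0$.

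The point I expect to require the most care is precisely this pairing at the corners: one must check that the lower (resp.\ upper) semicontinuous envelope automatically selects the value $m_*$ (resp.\ $m^*$) that makes the inequality go the right way, and that the superdifferential $D^+u(z)$ is empty at convex corners while $D^-u(z)$ is empty at concave ones, so that on each side the test that would fail is in fact vacuous. This matching is what renders the jump direction irrelevant and underlies the claimed abundance of non-regular discontinuous viscosity solutions; the remaining estimates reduce to the routine monotonicity comparison described above.
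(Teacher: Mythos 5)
Your proposal is correct and follows essentially the same route as the paper: verify the transport equation distributionally from $m(u_x+p)=j$, check the Hamilton--Jacobi equation classically off the jump points, and at each jump point use that only one of the sub-/super-differentials is nonempty together with the monotonicity of $q\mapsto q^2/2$ on the positive slopes to verify the inequality for the appropriate semicontinuous envelope. Your pairing of $m_*$ with concave corners and $m^*$ with convex corners is exactly the case split $m(x_i^-)\lessgtr m(x_i^+)$ in the paper's proof.
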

\begin{proof} We have that $u_x+p=\frac{j}{m}$ a.e.. Thus,  the second equation in \eqref{main} holds in the sense of distributions. Next, we observe that $u$ is differentiable for all $x\neq x_i$ and that the first equation in \eqref{main} is satisfied in the classical sense at those points. Thus, we just need to check the viscosity condition at $x=x_i$.

There are two possible cases:
\begin{enumerate}
        \item $m(x_i^-)>m(x_i^+)$.
        
        In this case,  $m^*(x_i)=m(x_i^-). $  Moreover,   $$u_x(x_i^-)=j/m(x_i^-)-p<j/m(x_i^+)-p=u_x(x_i^+).$$ 
        Hence, there is no smooth function touching $u$ from above; it touches only from below. Therefore, we need to check that, for any $\phi$ touching $u$ from below at $x_i$, we have
        \[\frac{(\phi_x(x_i)+p)^2}{2}+V(x_i)+m(x_i^-)-\Hh \geq 0.  
        \]
Because \eqref{main} is satisfied at $x\neq x_i$ in the classical sense, we have that
        \[\frac{(u_x(x_i^\pm)+p)^2}{2}+V(x_i)+m(x_i^\pm)-\Hh=0.
        \]
        Because
        $\phi$ touches $u$ from below and $j>0$, we have
        \[0<u_x(x_i^-)+p\leq\phi_x(x_i)+p\leq u_x(x_i^+)+p.
        \]
        Hence,
        \begin{align*}&\frac{(\phi_x(x_i)+p)^2}{2}+V(x_i)+m(x_i^-)-\Hh\\
        &\geq \frac{(u_x(x_i^-)+p)^2}{2}+V(x_i)+m(x_i^-)-\Hh = 0.
        \end{align*}
        
        \item $m(x_i-)<m(x_i+)$.
        
        In this case,  $m_*(x_i)=m(x_i^-)$  and $$u_x(x_i^-)=j/m(x_i^-)-p>j/m(x_i^+)-p=u_x(x_i^+).$$ 
        Hence, there is no smooth function touching $u$ from below -- only from above. Therefore, for any $\phi$ touching $u$ from above at $x_i$, we have
        \begin{equation}
        \label{vis_cond}
        \frac{(\phi_x(x_i)+p)^2}{2}+V(x_i)+m(x_i^-)-\Hh \leq 0.
        \end{equation}
 Because \eqref{main} holds in the classical sense for $x\neq x_i,$  we have that
        \[\frac{(u_x(x_i^\pm)+p)^2}{2}+V(x_i)+m(x_i^\pm)-\Hh=0.
        \]
  Because      $\phi$ touches $u$ from above, we have
        $0<u_x(x_i^+)+p\leq\phi_x(x_i)+p\leq u_x(x_i^-)+p.
        $
        Hence, \eqref{vis_cond} holds.
\end{enumerate}
\end{proof}
\begin{remark}
If $g$ is increasing, the construction of piecewise smooth solutions with discontinuous $m$ in the previous proposition fails because $m(x_i^-)=m(x_i^+)$, necessarily. Therefore, the smooth solutions found in Proposition \ref{prp:j<>0g_increasing} are the only possible ones: there are no extra discontinuous solutions as in the case of decreasing $g$. This is yet another consequence of the regularizing effect of an increasing $g$.
\end{remark}

\section{Anti-monotone elliptic mean-field games}\label{sec:amonotoneelliptic}

Now, we consider anti-monotone elliptic MFGs and the corresponding variational problem \eqref{mainef} with $g(m)=-m$. We use the direct method in the calculus of variations to prove the  existence of a minimizer of  the functional
\begin{equation}
\label{Je}
J_\epsilon[m] = \int_\Tt \left( \epsilon^2 \frac{m_x^2}{2m}+\frac{j^2}{2m}
- \frac{m^2}{2} -V(x) m \right) dx.
\end{equation}

\begin{pro}
        For each $j\in\Rr$, there exists a minimizer, $m$, of $J[m]$ in 
        $$
        \mathcal{A} = \left\{ m\in W^{1,2}(\Tt): m> 0 \wedge \int_{\Tt} m =1   \right\}.
        $$
        Moreover, $m,$ solves
        $$
        -\epsilon^2 \left(\frac{m_x}{m}\right)_x - \frac{j^2}{2m^2} -m + \Hh -V(x) =0
        $$
        for some $\Hh\in\Rr$.
\end{pro}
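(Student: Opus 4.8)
The plan is to run the direct method in the calculus of variations, following the scheme of Proposition \ref{p41}; the one genuinely new difficulty is that the coupling term $-\tfrac{m^2}{2}$ is \emph{concave}, so $J_\epsilon$ is no longer convex and its coercivity is not automatic. The heart of the argument is therefore an a priori lower bound. The key is the one-dimensional estimate
\[
\int_\Tt m^2\,dx\leq 1+2\left(\int_\Tt\frac{m_x^2}{m}\,dx\right)^{1/2},
\]
valid for every $m\geq 0$ with $\int_\Tt m=1$; it follows from $\|m_x\|_{L^1(\Tt)}\leq(\int_\Tt m)^{1/2}(\int_\Tt m_x^2/m)^{1/2}$, the fact that the mean-zero function $m-1$ must vanish somewhere so that $\|m-1\|_{L^\infty}\leq\|m_x\|_{L^1}$, and $\int_\Tt|m-1|\leq 2$. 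The decisive feature is that the concave term is dominated by only the \emph{square root} of the Dirichlet energy $\int_\Tt m_x^2/m$; by Young's inequality I can absorb $\tfrac12\int_\Tt m^2$ into a fraction of $\tfrac{\epsilon^2}{2}\int_\Tt m_x^2/m$ for \emph{every} fixed $\epsilon>0$, and since $-\int_\Tt Vm\geq-\|V\|_{L^\infty(\Tt)}$, I obtain
\[
J_\epsilon[m]\geq\frac{\epsilon^2}{4}\int_\Tt\frac{m_x^2}{m}\,dx+\frac{j^2}{2}\int_\Tt\frac1m\,dx-C
\]
for a constant $C=C(\epsilon,j,V)$. I expect this step to be the main obstacle, since the gradient regularization must dominate the concave term no matter how small $\epsilon>0$ is, and it is exactly the sublinear growth above that makes the absorption possible.

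With this bound, the argument for $j\neq 0$ mirrors Proposition \ref{p41}. Minimizing over the weakly closed set $\{m\in W^{1,2}(\Tt):m\geq 0,\ \int_\Tt m=1\}$, the estimate controls $\int_\Tt (m_n)_x^2/m_n$ and $\int_\Tt 1/m_n$ along a minimizing sequence, so $\sqrt{m_n}$ is bounded in $W^{1,2}(\Tt)$ and, by Morrey's theorem, equi-H\"older continuous of exponent $\tfrac12$. Passing to a subsequence, $\sqrt{m_n}\to\sqrt m$ uniformly and $\sqrt{m_n}\rightharpoonup\sqrt m$ in $W^{1,2}$, and the constraint $\int_\Tt m=1$ passes to the limit. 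Fatou's lemma gives $\int_\Tt 1/m\leq C$; since $\sqrt m$ is H\"older-$\tfrac12$, a zero of $m$ would force $m(x)\leq C|x-x_0|$ and hence $\int_\Tt 1/m=\infty$, a contradiction, so $m>0$ and $m\in\Aa$. Lower semicontinuity then follows because $\int_\Tt m_x^2/m=4\int_\Tt(\sqrt m)_x^2$ is convex in the gradient, hence weakly lower semicontinuous, $\int_\Tt j^2/(2m)$ is lower semicontinuous by Fatou, and $-\tfrac12\int_\Tt m^2-\int_\Tt Vm$ converges under the uniform convergence $m_n\to m$. Thus $J_\epsilon[m]\leq\liminf_n J_\epsilon[m_n]$ and $m$ is a minimizer.

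The case $j=0$ needs a separate argument, exactly as in Proposition \ref{p41}, because the singular term $j^2/(2m)$ no longer prevents $m$ from vanishing and the positivity step above is unavailable. Following that proposition, I would instead construct a strictly positive solution of the Euler--Lagrange equation by Schauder's fixed point theorem: on the cone of non-negative densities I define $\Xi:\eta\mapsto e^w$, where $w$ solves the linear problem $-\epsilon^2 w_{xx}=\eta-\Hh+V$ with $\Hh=1+\int_\Tt V$ fixed by the zero-mean solvability condition and the additive constant of $w$ chosen so that $\int_\Tt e^w=1$. As there, an elementary argument shows $w$ is uniformly bounded above and below, so $\Xi$ is continuous and compact, and Schauder's theorem yields a fixed point $m=e^w>0$ solving the equation for $j=0$.

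Finally, in each case the solution is strictly positive, hence an interior point of $\Aa$, so I may take admissible variations $m+s\phi$ with $\int_\Tt\phi=0$. Differentiating $J_\epsilon[m+s\phi]$ at $s=0$, integrating the gradient term by parts, and introducing the Lagrange multiplier $\Hh$ for the constraint $\int_\Tt m=1$, I recover the Euler--Lagrange equation \eqref{efm} with $g(m)=-m$, that is, the equation in the statement. Unlike Proposition \ref{p41}, no uniqueness is asserted here, reflecting the loss of convexity caused by the anti-monotone coupling.
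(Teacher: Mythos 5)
Your proof is correct and follows the same overall architecture as the paper's: identify the loss of convexity and coercivity from the $-\tfrac{m^2}{2}$ term as the main difficulty, establish a bound on $\int_\Tt m^2$ that is \emph{sublinear} (power $\tfrac12$) in the Dirichlet energy $\int_\Tt m_x^2/m$, absorb it by a weighted Young inequality for every fixed $\epsilon>0$, and then run the direct method exactly as in Proposition \ref{p41} for $j\neq 0$, switching to the Schauder fixed-point construction on $-\epsilon^2(\ln m)_{xx}-m-V=-\Hh$ for $j=0$. The one place where you genuinely diverge is the derivation of the key estimate: the paper obtains $\int_\Tt m_n^2\leq C+C\bigl(\int_\Tt (m_n)_x^2/(2m_n)\bigr)^{1/2}$ by applying the Gagliardo--Nirenberg interpolation inequality \eqref{GNineq} to $w=\sqrt{m_n}$ with $p=4$, $r=q=2$, $a=\tfrac14$, whereas you prove the same bound by elementary one-dimensional means (Cauchy--Schwarz giving $\|m_x\|_{L^1}\leq(\int_\Tt m_x^2/m)^{1/2}$, the mean-value observation that $m-1$ vanishes somewhere so $\|m-1\|_{L^\infty}\leq\|m_x\|_{L^1}$, and $\int_\Tt|m-1|\leq 2$). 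Your version is self-contained and avoids the slight awkwardness that the paper's \eqref{GNineq} is stated for mean-zero $w$ while $\sqrt{m_n}$ is not mean-zero; the paper's version generalizes more readily to higher dimensions or other exponents. Both arguments share the same residual looseness at $j=0$, which you inherit knowingly: the fixed point yields a positive solution of the Euler--Lagrange equation but, absent convexity, it is not shown to be a minimizer of $J_\epsilon$.
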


\begin{proof}
        To prove the existence of a positive minimizer, we consider separately the cases $j\neq 0$ and $j=0$.\\
\subparagraph{\bf{ Case 1. $\bf j\neq 0$}}
We take a minimizing sequence, $m_n\in\mathcal{A,}$ and note that there is a constant, $C,$ such that
\[
\int_\Tt  \epsilon^2 \frac{(m_n)_x^2}{2m_n}+\frac{j^2}{2m_n}  dx \leq C + \int_\Tt   \frac{m_n^2}{2}.
\]
Therefore,  we seek to control $\int m^2$ by the integral expression on the left-hand side. \\
For that, we recall the Gagliardo-Nirenberg inequality,
\begin{equation}
\label{GNineq}
\left|\left|w\right|\right|_{L^p} \leq \left|\left|w_x\right|\right|_{L^r}^a \left|\left|w\right|\right|_{L^q}^{1-a}  
\end{equation}
for $0\leq a \leq 1$, with
\[
\frac 1 p = a \left(\frac 1 r - 1\right) + (1-a) \frac 1 q, 
\]
whenever $\int_{\Tt} w=0$. With $p=4$ and $r=q=2$, we obtain $a=\frac 1 4$. Using these values in \eqref{GNineq}, taking into account that  $\int m =1, $ and choosing $w=\sqrt{m}$, we obtain
\[
\int_{\Tt} m_n^2 \leq C+C 
\left( \int_{\Tt}\frac{(m_n)_x^2}{2m_n}\right) ^\frac 1 2.
\]
Thus, using a weighted Cauchy inequality, 
\[
\int_\Tt  \epsilon^2 \frac{(m_n)_x^2}{2m_n}+\frac{j^2}{2m_n}  dx \leq C.
\]
Finally, we argue as in the proof of Proposition \ref{p41} and show the existence of a minimizer. 

\subparagraph{\bf{ Case 2. $\bf j= 0$}}
 Here, we use a fixed point argument as in the proof of Proposition \ref{p41}.  For that,  we rewrite the Euler-Lagrange equation as
\begin{equation}
\label{elee}
-\epsilon^2(\ln m)_{xx} - m -V(x) = -\Hh, 
\end{equation}
and argue as before. 
However, because the functional \eqref{Je} is non-convex, uniqueness may fail.  
\end{proof}

The preceding result does not give a unique minimizer. We note that for large  $j$, the functional  \eqref{Je} behaves like a convex functional. Finally, we note that as $\epsilon\to 0$, numerical evidence suggests that there is no $\Gamma$-convergence to a minimizer, see figure \ref{fig: eliptic2}, where we plot a solution for small $\epsilon$ versus the solution with $\epsilon=0$.

\begin{figure}
        \centering      
        \includegraphics[width=50mm]{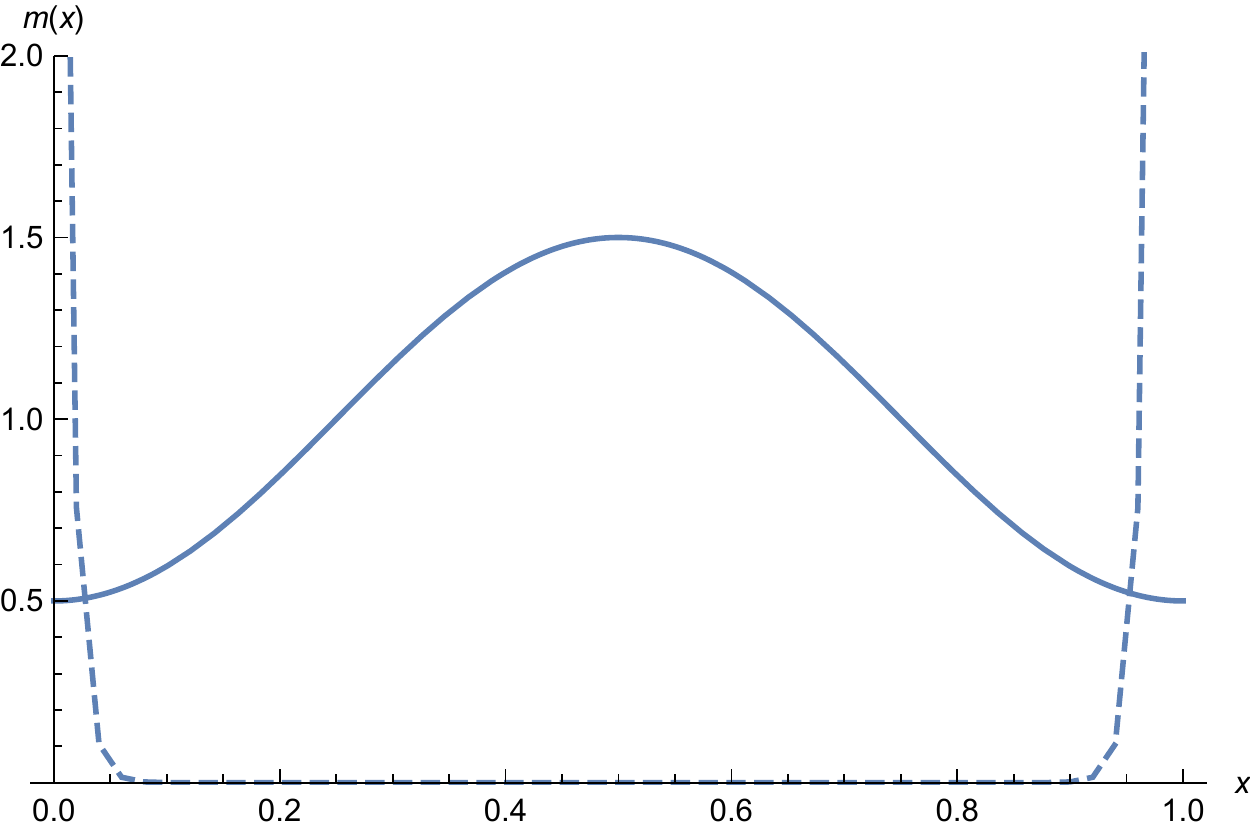}
        \caption{Solution $m$ of \eqref{maine} when $g(m)=-m,\ j=1,\ V(x)=\frac 1 2 \sin(2 \pi (x + 1/4))$ for $\epsilon=0.01$ (dashed) and for $\epsilon=0$ (solid).}
                \label{fig: eliptic2}
        \end{figure}

      \section{Regularity regimes of the current equation for $g(m)=-m$}\label{sec: regmodes}
      
      Now, we analyze the regularity regimes of \eqref{eq: currentform_1}; that is, we determine for which values of $j$ \eqref{eq: currentform_1} has or fails to have smooth solutions. For simplicity, we assume that $0$ is the only point of maximum of $V$. Moreover, as before, we consider the case $j\geq 0$, as the case $j<0$ is analogous.
      
      We begin by proving that $\alpha^+,\alpha^-$, defined in \eqref{eq: a+-}, are monotone. 
      \begin{proposition}\label{prp: a+,-}
        We have that
        \begin{itemize}
                \item[i.] $\alpha^+$ and $\alpha^-$ are increasing on $(0,\infty)$;
                \item[ii.] $\lim\limits_{j\to+\infty}\alpha^+(j)=\lim\limits_{j\to+\infty}\alpha^-(j)=\infty$;
                \item[iii.] $\lim\limits_{j\to 0} \alpha^+(j)=\max\limits_{\Tt}V-\int\limits_{\Tt} V(x) dx, \ \lim\limits_{j\to 0} \alpha^-(j)=0$.
        \end{itemize}
      \end{proposition}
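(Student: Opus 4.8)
The plan is to reduce all three statements to elementary estimates on the single algebraic relation defining $m_j^\pm$. Writing $\tau=j^{2/3}=t_{\min}$ and $W(x)=\max_{\Tt}V-V(x)\geq 0$, the relation $\frac{j^2}{2(m_j^\pm)^2}+m_j^\pm=\Hh_j^{cr}-V(x)$ becomes
\[
\frac{\tau^3}{2\,(m_j^\pm(x))^2}+m_j^\pm(x)=W(x)+\frac{3\tau}{2},
\]
with $m_j^-(x)\leq \tau\leq m_j^+(x)$. Since the left-hand side, as a function of $m>0$, has minimum $\tfrac{3\tau}{2}$ attained at $m=\tau$, for every $x\neq 0$ (where $W(x)>0$) there are exactly two roots $m_j^-(x)<\tau<m_j^+(x)$, depending smoothly on $\tau$, while at $x=0$ both equal $\tau$. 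The functions $x\mapsto m_j^\pm(x)$ are continuous, hence integrable over $\Tt$. As $j$ and $\tau$ increase together, it suffices throughout to track dependence on $\tau$.

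For the monotonicity in part i, I would fix $x\neq 0$ and differentiate in $\tau$. Setting $G(\tau,m)=\frac{\tau^3}{2m^2}+m-W(x)-\frac{3\tau}{2}$, the implicit function theorem applies because $\partial_m G=1-\tau^3/m^3\neq 0$ whenever $m\neq\tau$, and with $r=\tau/m$ it gives
\[
\frac{dm_j^\pm}{d\tau}=\frac{3}{2}\cdot\frac{1-r^2}{1-r^3}=\frac{3}{2}\cdot\frac{1+r}{1+r+r^2}>0 .
\]
Thus $m_j^\pm(x)$ is strictly increasing in $\tau$, hence in $j$, for almost every $x$; integrating over $\Tt$ yields that $\alpha^\pm$ is increasing. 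I expect this to be the \emph{main obstacle}: since $m_j^+=\tau s^+$ is a product of the increasing factor $\tau$ and the factor $s^+$, which \emph{decreases} toward $1$ as $\tau$ grows, the monotonicity of $\alpha^+$ is not visible without the explicit derivative above. (For $\alpha^-$ one could instead note directly that $m_j^-=\tau s^-$ is a product of two increasing positive factors.)

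For the limits I would use two-sided pinching extracted from the same relation. Because $m_j^+\geq\tau$, the term $\frac{\tau^3}{2(m_j^+)^2}\leq\frac{\tau}{2}$, so
\[
W(x)+\tau\;\leq\; m_j^+(x)\;\leq\; W(x)+\tfrac{3}{2}\tau ,
\]
giving $\|m_j^+-W\|_{L^\infty}\leq\tfrac32\tau\to 0$ as $j\to 0$. Integrating yields $\alpha^+(j)\to\int_{\Tt}W=\max_{\Tt}V-\int_{\Tt}V$, the first limit in iii; the same lower bound $m_j^+\geq\tau=j^{2/3}$ gives $\alpha^+(j)\geq j^{2/3}\to\infty$, the first limit in ii.

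For $\alpha^-$, the containment $0<m_j^-\leq\tau=j^{2/3}$ immediately gives $0\leq\alpha^-(j)\leq j^{2/3}\to 0$ as $j\to 0$, which is the second limit in iii. Finally, for ii, writing $M:=\max_{\Tt}V-\min_{\Tt}V$ and using $m_j^-\leq\tau$ together with $W\leq M$, the relation yields
\[
\frac{j^2}{2(m_j^-)^2}=W(x)+\frac{3\tau}{2}-m_j^-\leq M+\frac{3}{2}\tau ,
\]
so $m_j^-(x)\geq j/\sqrt{2M+3j^{2/3}}\to\infty$ and hence $\alpha^-(j)\to\infty$. In this way each of the limits reduces to a single inequality read off directly from the algebraic equation for $m_j^\pm$, leaving only the derivative computation of part i as the substantive step.
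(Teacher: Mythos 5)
Your proposal is correct and follows essentially the same route as the paper: part i via implicit differentiation of the defining relation $\frac{j^2}{2m^2}+m=\Hh_j^{cr}-V(x)$ (your substitution $\tau=j^{2/3}$ and the unified formula $\frac{3}{2}\frac{1+r}{1+r+r^2}$ is a tidier packaging of the paper's sign analysis of $t'(j)$), and parts ii--iii via the elementary bounds $m_j^-\leq j^{2/3}\leq m_j^+$ combined with the algebraic equation. The only substantive deviation is your direct lower bound $m_j^-\geq j/\sqrt{2M+3j^{2/3}}$ where the paper instead compares with the test value $j^{2/3}/2$; both are valid and of the same difficulty.
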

      \begin{proof}
        \textbf{i.} First, we prove that $m^+_j(x)$ and $m^-_j(x)$ (see Section \ref{sec: j>0} for the definition) are increasing in $j$ at every point $x \in \Tt$.
        We fix $x$ and set $h=(\max\limits_{\Tt} V )-V(x)$. If $h=0$, then $m^-_j(x)=j^{2/3}$, which is an increasing function of $j$. Next, for $h>0$ let $t(j)=m^-_j(x)<j^{2/3}$. We have that
        \begin{equation*}
                \frac{j^2}{2t(j)^2}+t(j)-\frac{3}{2}j^{2/3}=h.
        \end{equation*}
        By the implicit function theorem, $t(j)$ is differentiable. Differentiating the previous equation in $j$ gives
        \begin{equation*}
                t'(j)=\frac{j^{-1/3}-\frac{j}{t^2}}{1-\frac{j^2}{t^3}}.
        \end{equation*}
                        
        Because $0<t(j)<j^{2/3},\ t'(j)>0$. Hence, $t(j)$ is increasing. The proof for $m^+_j(x)$ is identical.
        
        \textbf{ii.} By definition, $m^+_j(x) \geq j^{2/3}$. Hence, $\lim\limits_{j\to\infty}\alpha^+(j)=\infty$.
        On the other hand,  for $j$ large enough, we have
        \begin{align*}
                \frac{j^2}{2(j^{2/3}/2)^2}+\frac{j^{2/3}}{2}-\frac{3}{2}j^{2/3}=j^{2/3}&>\max \limits_{\Tt}V- V(x)\\ \nonumber
                &=\frac{j^2}{2(m^-_j(x))^2}+m^-_j(x)-\frac{3}{2}j^{2/3}.
        \end{align*}
        
        Therefore, $m^-_j(x)>j^{2/3}/2 $ and  $\lim\limits_{j\to\infty}\alpha^-(j)=\infty$.
        
        \textbf{iii.} Because $m^-_j(x) \leq j^{2/3}$ for every $x\in \Tt$, $\lim\limits_{j\to 0}m^-_j(x)=0$ for all $x\in \Tt$. Thus, $\lim\limits_{j\to 0}\alpha^-(j)=0$. On the other hand, $m^+_j(x) \geq j^{2/3}$. Thus, $0\leq\frac{j^2}{(m_j^+(x))^2} \leq j^{2/3}$. Therefore,
        \begin{equation*}
                \lim\limits_{j \to 0}   m^+_j(x)=\lim\limits_{j \to 0}\left(\frac{3}{2}j^{2/3}-\frac{j^2}{2(m_j^+(x))^2}+\max\limits_{\Tt}V-V(x)\right)=\max\limits_{\Tt}V-V(x).
        \end{equation*}
        Thus,
        \begin{equation*}
                \lim\limits_{j\to 0} \alpha^+(j)=\max\limits_{\Tt}V-\int\limits_{\Tt} V(x) dx.
        \end{equation*}
      \end{proof}
      Next, we define two numbers that characterize regularity regimes of \eqref{main}:
      \begin{equation}\label{eq: j_lower}
        j_{lower}=\inf \{j>0\ \text{s.t.} \ \alpha^{+}(j)> 1\},
      \end{equation}
      and
      \begin{equation}\label{eq: j_upper}
        j_{upper}=\inf \{j>0\ \text{s.t.} \ \alpha^{-}(j)> 1\}.
      \end{equation}
      \begin{proposition}\label{prp: regmodes}
        Let $j_{lower}$ and $j_{upper}$ be given by \eqref{eq: j_lower} and \eqref{eq: j_upper}. Then
        \begin{itemize}
                \item[i.] $0\leq j_{lower} < j_{upper}<\infty$;
                \item[ii.] for $j\geq j_{upper}$, the system \eqref{main} has smooth solutions;
                \item[iii.] for $j_{lower}<j< j_{upper}$, the system \eqref{main} has only discontinuous solutions;
                \item[iv.] if $j_{lower}>0$, the system \eqref{main} has smooth solutions for $0<j\leq j_{lower}$.
        \end{itemize} 
      \end{proposition}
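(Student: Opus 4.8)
The plan is to deduce all four claims from two facts already established in the excerpt: the monotonicity and limiting behavior of $\alpha^+$ and $\alpha^-$ from Proposition \ref{prp: a+,-}, and the trichotomy of Proposition \ref{prp: j>0}. The structural input I use throughout is that, by the implicit-function-theorem argument in Proposition \ref{prp: a+,-}, both $\alpha^+$ and $\alpha^-$ are continuous and strictly increasing on $(0,\infty)$, with $\alpha^-(j)<\alpha^+(j)$ (since $V$ is non-constant), $\alpha^-(0^+)=0$, and $\alpha^\pm(j)\to\infty$ as $j\to\infty$. Because $\alpha^-$ is continuous, strictly increasing, and passes from $0$ to $\infty$, there is a unique level with $\alpha^-=1$, and the superlevel set in \eqref{eq: j_upper} is the open half-line where $\alpha^->1$; hence $j_{upper}$ is exactly that crossing value, $\alpha^-(j_{upper})=1$, and $j_{upper}<\infty$. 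The same reasoning applied to $\alpha^+$ shows $\{j>0:\alpha^+(j)>1\}=(j_{lower},\infty)$ with $j_{lower}<\infty$, and $\alpha^+(j_{lower})=1$ whenever $j_{lower}>0$.

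For part i, finiteness of both thresholds and $j_{lower}\geq 0$ are immediate from the above. To obtain the strict inequality $j_{lower}<j_{upper}$, I would evaluate $\alpha^+$ at $j_{upper}$: the strict gap gives $\alpha^+(j_{upper})>\alpha^-(j_{upper})=1$, so $j_{upper}$ lies in the open half-line $(j_{lower},\infty)=\{\alpha^+>1\}$, which forces $j_{lower}<j_{upper}$ directly, with no further estimate needed.

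For the remaining parts I would simply read off the position of $j$ relative to the thresholds and invoke Proposition \ref{prp: j>0}. For $j\geq j_{upper}$, monotonicity yields $\alpha^-(j)\geq\alpha^-(j_{upper})=1$, which is Case ii and produces the smooth solution $m_j=m^-_{\Hh_j}$, giving part ii. For $j_{lower}<j<j_{upper}$, strict monotonicity gives simultaneously $\alpha^+(j)>\alpha^+(j_{lower})=1$ and $\alpha^-(j)<\alpha^-(j_{upper})=1$, i.e. $\alpha^-(j)<1<\alpha^+(j)$, which is exactly Case iii; since the unique regular solution there carries a genuine jump (the gap \eqref{eq: gap} is strict away from the maxima of $V$), no continuous solution exists, which is part iii. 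Finally, if $j_{lower}>0$, then for $0<j\leq j_{lower}$ monotonicity gives $\alpha^+(j)\leq\alpha^+(j_{lower})=1$, placing $j$ in Case i with the smooth solution $m_j=m^+_{\Hh_j}$; this is part iv.

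The only delicate point I anticipate is the identification of $j_{lower}$ and $j_{upper}$ with honest crossing values of $\alpha^\pm$ through the level $1$, and in particular the continuity of $\alpha^+$ and $\alpha^-$ up to those values. The subtlety sits at the critical level $\Hh=\Hh_j^{cr}$, where $m^+_j$ and $m^-_j$ coalesce at the maximum of $V$ and $F_j'$ vanishes, so the implicit function theorem degenerates there. I therefore expect the main technical care to be a direct (dominated-convergence) verification that the integrals $\alpha^\pm(j)=\int_\Tt m^\pm_j\,dx$ remain continuous at these levels, rather than relying on pointwise differentiability alone; once this continuity is in hand, the rest of the argument is a purely order-theoretic bookkeeping of the three cases.
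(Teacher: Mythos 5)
Your argument is correct and is exactly the route the paper takes: its proof of this proposition is the single sentence that it is ``a straightforward application of Propositions \ref{prp: j>0} and \ref{prp: a+,-}'', and your write-up is a faithful expansion of that, including the one genuinely needed ingredient the paper leaves implicit (continuity of $\alpha^\pm$, so that $j_{lower}$ and $j_{upper}$ are honest crossing values of the level $1$). No discrepancies to report.
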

      \begin{proof}
        The proof is a straightforward application of Propositions \ref{prp: j>0} and \ref{prp: a+,-}.
      \end{proof}
      Finally, we characterize the regularity at $j=0$.
      \begin{proposition}\label{prp: regmode_j=0}
        The system \eqref{eq: currentform_j=0} admits smooth solutions if and only if
        \begin{equation*}
                \alpha^+(0)\leq 1.
        \end{equation*}
      \end{proposition}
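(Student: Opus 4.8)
The plan is to recast the condition $\alpha^+(0)\le 1$ in concrete analytic terms and then treat the two implications separately. By part iii of Proposition~\ref{prp: a+,-}, $\alpha^+(0)=\max_{\Tt}V-\int_{\Tt}V$, so $\alpha^+(0)\le 1$ is equivalent to $\max_{\Tt}V\le 1+\int_{\Tt}V$, which is exactly the dichotomy separating Case i from Case ii in Proposition~\ref{prp: j=0}. For the ``if'' direction I would simply invoke that proposition: assuming $\alpha^+(0)\le 1$ gives $1+\int_{\Tt}V\ge\max_{\Tt}V$, so Case i of Proposition~\ref{prp: j=0} produces the classical solution $m_0=\Hh_0-V$, $u_0\equiv 0$ with $\Hh_0=1+\int_{\Tt}V$, which is smooth; nothing further is needed.

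The substance is the ``only if'' direction, which I would prove by contraposition: suppose $\alpha^+(0)>1$, i.e. $\max_{\Tt}V>1+\int_{\Tt}V$, and assume for contradiction that \eqref{eq: currentform_j=0} admits a smooth solution $(u,m,\Hh)$. Since a smooth solution is in particular regular, the lower bound $\Hh\ge\Hh_0$ established before Proposition~\ref{prp: H_0} together with that proposition forces $\Hh=\Hh_0=\max_{\Tt}V$. Setting $Z=\{x\in\Tt:m(x)=0\}$, the complementarity condition $m(u_x+p)=0$ gives $u_x+p=0$ and $m=\Hh_0-V$ on $Z^c$, whence $\int_{Z^c}(\Hh_0-V)=\int_{\Tt}m=1$. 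As $\int_{\Tt}(\Hh_0-V)=\max_{\Tt}V-\int_{\Tt}V>1$, the set $Z$ carries positive mass, while $m\not\equiv 0$ makes the open set $Z^c$ nonempty of positive measure; thus $Z^c$ is a proper, nonempty, open subset of $\Tt$.

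The heart of the argument is to locate $\partial Z^c$, and this is the step I expect to be most delicate. Let $x^*\in\partial Z^c$. Continuity of $m$ places $x^*\in Z$, so $m(x^*)=0$; continuity of $u_x$ (this is exactly where smoothness, as opposed to mere regularity, enters) together with $u_x+p=0$ on $Z^c$ gives $u_x(x^*)+p=0$. Substituting into the first equation of \eqref{eq: currentform_j=0} yields $V(x^*)=\Hh_0=\max_{\Tt}V$, so $x^*$ is a maximum point of $V$. Under the standing hypothesis that $0$ is the unique maximum, $\partial Z^c\subseteq\{0\}$, and since $0\in Z$ we have $0\notin Z^c$. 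Hence $Z^c$ is a clopen subset of the connected set $\Tt\setminus\{0\}$, so $Z^c=\emptyset$ or $Z^c=\Tt\setminus\{0\}$; either alternative renders one of $Z$, $Z^c$ null, contradicting the positivity of both measures found above. This contradiction rules out a smooth solution and completes the contrapositive.

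The main obstacle, as indicated, is the interface analysis: it is precisely a negative jump of $u_x$ at a non-maximum point that allows the non-smooth regular solutions of Proposition~\ref{prp: j=0}, Case ii, so one must use continuity of $u_x$ to force any $Z$--$Z^c$ interface onto the maximum set of $V$. After that, the single-maximum assumption and a connectedness argument on the circle close the gap; without the single-maximum hypothesis the boundary could split over several maxima and the dichotomy would have to be replaced by a component-by-component count.
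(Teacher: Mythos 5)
Your proof is correct, and its skeleton coincides with the paper's: the translation of $\alpha^+(0)\le 1$ into $\max_{\Tt}V\le 1+\int_{\Tt}V$ via part iii of Proposition \ref{prp: a+,-}, and the appeal to Case i of Proposition \ref{prp: j=0} for the ``if'' direction, are exactly the two ingredients the paper cites --- indeed its entire proof is that one-line citation. Where you genuinely add something is the ``only if'' direction, which the paper leaves implicit: Case ii of Proposition \ref{prp: j=0} only \emph{exhibits} non-smooth solutions and Proposition \ref{prp: H_0} only pins down $\Hh=\Hh_0$, so neither literally excludes a smooth solution when $\max_{\Tt}V>1+\int_{\Tt}V$. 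Your interface argument closes that gap correctly: continuity of $m$ forces $m=0$ on $\partial Z^c$, continuity of $u_x$ (the point where smoothness, rather than mere regularity, is used) propagates $u_x+p=0$ to the boundary, the first equation of \eqref{eq: currentform_j=0} then places $\partial Z^c$ in the maximum set of $V$, and the standing single-maximum hypothesis of Section \ref{sec: regmodes} plus connectedness of $\Tt\setminus\{0\}$ contradicts the fact that both $Z$ and $Z^c$ have positive measure. Your closing caveat is also accurate: without the single-maximum assumption the boundary could sit on several maxima and the conclusion would need the component-by-component count you describe, which is consistent with the multiplicity phenomena in Remark \ref{rmrk: V_multimax_j=0}.
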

      \begin{proof}
        The proof follows from \textit{iii} in Proposition \ref{prp: a+,-} and \textit{i} in Proposition \ref{prp: j=0}.
      \end{proof}
      
      Let $V(x)=A \sin(2\pi(x+1/4))$. In Fig. \ref{fig: a+,-}, we plot $\alpha^+$ and $\alpha^-$ for $A=0.5$ and $A=5$. From Proposition \ref{prp: a+,-},  $\alpha^+(0)=A$. Thus, if $A=0.5,$ we have $\alpha^+(0)<1$ and, for $A=5,$ we have $\alpha^+(0)>1$.
      Therefore, $j_{lower}>0$ for $A=0.5$ and $j_{lower}=0$ for $A=5$. Hence, if $A=0.5,$ \eqref{main} has smooth solutions for a low enough current level  ($j\leq0.218$) or for a high enough current level  ($j\geq1.750$).
      In contrast, if $A=5,$ there are no smooth solutions for low currents,  only for large currents ($j\geq 3.203$).
      
      We end the section with an a priori estimate for the current level for smooth solutions.
      \begin{proposition}[A priori estimate]
        Suppose that $\max\limits_{\Tt} V >1+\int\limits_{\Tt}V(x)dx$ and let $(u,m,\Hh)$ be a smooth solution of \eqref{main} with $m>0$. Then, there exists a constant, $c(V)>0$, such that
        \begin{equation}\label{eq: aprioribound}
                \inf \limits_{\Tt} m (u_x+p) \geq c(V).
        \end{equation}
      \end{proposition}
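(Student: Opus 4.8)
The plan is to reduce the estimate to a lower bound on the current and then extract that bound from the classification of smooth solutions already established. Since the second equation in \eqref{main} forces $j=m(u_x+p)$ to be a constant, we have $\inf_\Tt m(u_x+p)=j$, so the proposition is equivalent to the assertion that every smooth solution with $m>0$ has current $j\geq c(V)>0$. The first step is to restate the hypothesis in terms of $\alpha^+$: by part iii of Proposition \ref{prp: a+,-}, $\lim_{j\to 0}\alpha^+(j)=\max_\Tt V-\int_\Tt V$, so the assumption $\max_\Tt V>1+\int_\Tt V$ is precisely the statement $\alpha^+(0)>1$.

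Next I would localize which of the three regimes of Proposition \ref{prp: j>0} a smooth positive solution can occupy. Because $\alpha^+$ is increasing on $(0,\infty)$ (part i of Proposition \ref{prp: a+,-}) and $\alpha^+(0)>1$, we get $\alpha^+(j)>1$ for all $j>0$; this excludes Case i at every positive current. The value $j=0$ is also excluded, since a smooth solution with $m>0$ at $j=0$ forces $u_x+p\equiv 0$, whence $m=\Hh-V$ and $\int_\Tt m=1$ give $\Hh=1+\int_\Tt V$ and $m=1+\int_\Tt V-V\geq 0$, which would require $\max_\Tt V\leq 1+\int_\Tt V$, contradicting the hypothesis. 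Since Case iii of Proposition \ref{prp: j>0} yields a genuinely discontinuous $m$, the only surviving possibility for a continuous positive solution is Case ii; in particular $\alpha^-(j)\geq 1$ and $m(x)=m^-_{\Hh_j}(x)$ throughout $\Tt$.

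Finally, by the defining property of the lower branch, $m^-_{\Hh_j}(x)\leq t_{min}=j^{2/3}$ for every $x\in\Tt$. Integrating the normalization $\int_\Tt m=1$ over the unit-length torus then gives $1=\int_\Tt m(x)\,dx\leq j^{2/3}$, hence $j\geq 1$, and one may take $c(V)=1$. I expect the main obstacle to be the case analysis rather than the concluding inequality: one must carefully exclude Case i (through the monotonicity and limiting value of $\alpha^+$), exclude $j=0$, and use that the intermediate regime (Case iii) is never continuous, so that the solution is pinned to the lower branch where the pointwise bound $m\leq j^{2/3}$ becomes available. Once that is done the estimate $j\geq 1$ is immediate; if a sharper, genuinely $V$-dependent constant is preferred, one can instead take $c(V)=j_{upper}$, which is positive by part i of Proposition \ref{prp: regmodes}.
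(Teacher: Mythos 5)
Your argument is correct and follows essentially the same route as the paper: translate the hypothesis into $\alpha^+(0)>1$, rule out $j=0$ and Case i via the monotonicity of $\alpha^+$, and observe that Case iii is genuinely discontinuous, so a smooth positive solution must lie on the lower branch. The paper then simply takes $c(V)=j_{upper}$, while you additionally extract the explicit bound $j\geq 1$ from $m\leq j^{2/3}$ and $\int_{\Tt} m=1$ (consistent with the paper, since $\alpha^-(j)\leq j^{2/3}$ forces $j_{upper}\geq 1$), which is a harmless and slightly more concrete refinement.
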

      \begin{proof}
        From Proposition \ref{prp: a+,-}, we have that $\alpha^+(0)>1$. Thus, by Proposition \ref{prp: regmode_j=0}, \eqref{main} does not have smooth solutions for $j=0$. Additionally, $j_{lower}=0$. Next, take $c(V)=j_{upper}$ and by Proposition \ref{prp: regmodes}, we conclude \eqref{eq: aprioribound}.
      \end{proof}
      The previous Proposition shows that if the potential, $V,$ has a large oscillation (this happens in the example for $A=5$, Fig. \ref{fig: a+,-}), then only high current solutions are smooth.

        \begin{figure}
        \centering      
        \includegraphics[width=50mm]{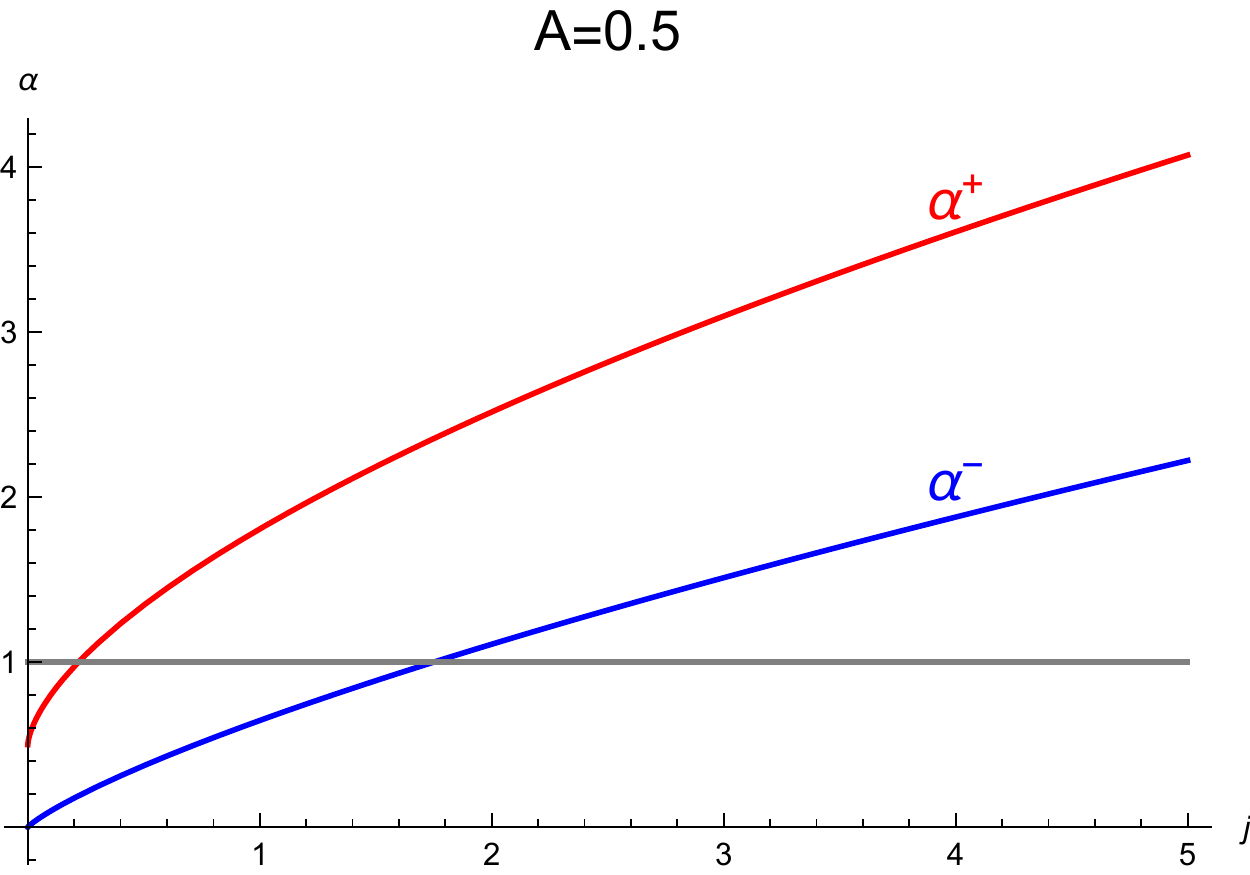}
        \includegraphics[width=50mm]{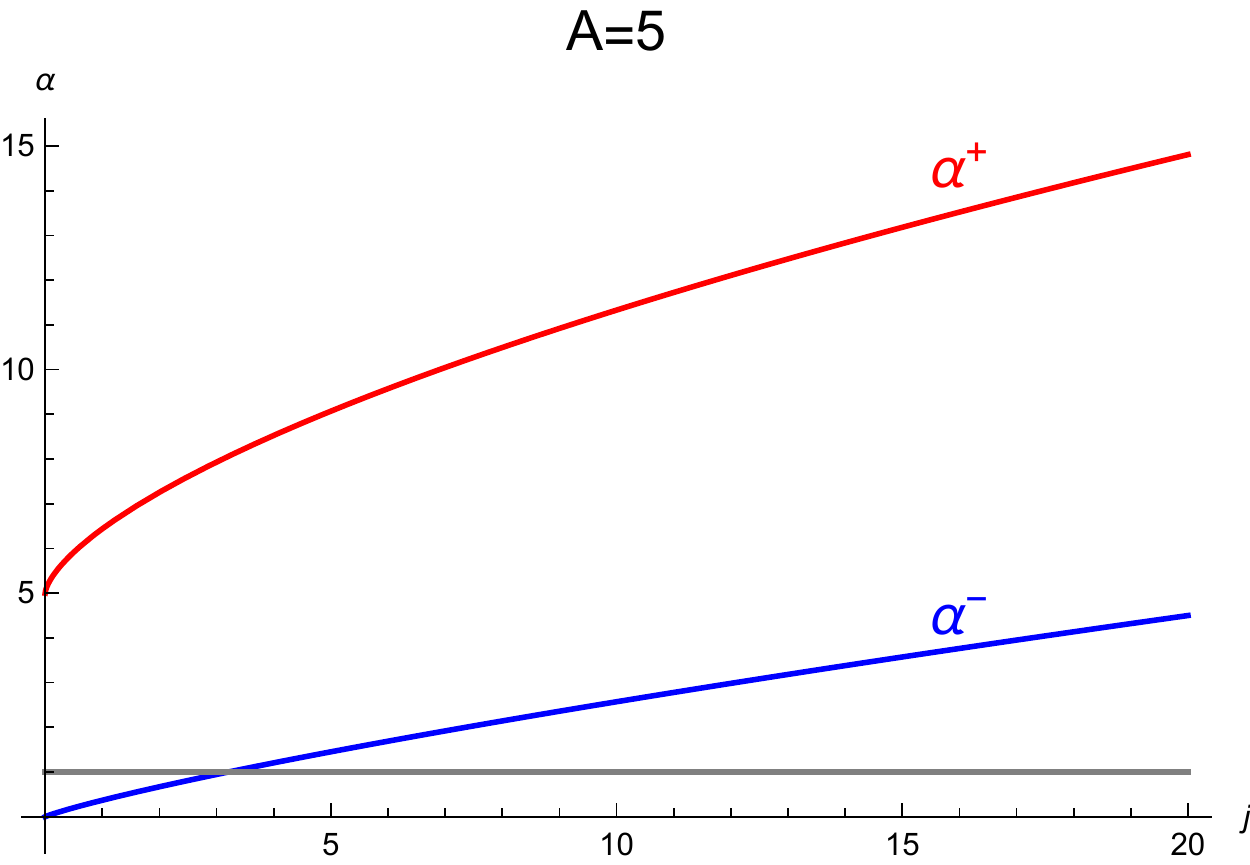}
        \caption{$\alpha^+$ and $\alpha^-$ for $V(x)=A \sin(2 \pi (x + 1/4))$. $j_{lower}=0.218,\ j_{upper}=1.750\ (A=0.5);\ j_{lower}=0,\ j_{upper}=3.203\ (A=5)$.}
        \label{fig: a+,-}
        \end{figure}

\section{Asymptotic behavior of solutions as $j \to 0$ and $j \to +\infty$} \label{sec: asymptotics}
        
In Section \ref{sec: j>0}, we studied regular solutions of \eqref{main} with a current level $j>0$. Here, we continue the analysis of the decreasing nonlinearity, $g(m)=-m,$ and examine the asymptotic behavior of regular solutions as $j \to 0$ and $j \to \infty$.
           
        As before, we assume that  $V$ has a single maximum at $0$.
        First, we address the case $j\to\infty$.        
        \begin{proposition}\label{prp: j_to_infty}
        For $j>0$, let $(u_j,m_j,\Hh_j)$ solve \eqref{eq: currentform_1}. We have that
        \begin{itemize}
        \item[i.] $\lim\limits_{j \to \infty} \Hh_j=\infty$;
        \item[ii.] For $x \in \Tt$, $\lim\limits_{j \to \infty} m_j(x)=1$, $\lim\limits_{j \to \infty} u_j(x)=0$, and $\lim\limits_{j\to \infty}p_j=\infty$.
        \end{itemize}
        \end{proposition}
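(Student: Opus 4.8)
The plan is to reduce everything to estimates on the single scalar quantity $\int_\Tt m_j^{-2}\,dx$, exploiting the pointwise algebraic equation in \eqref{eq: currentform_1} together with the mass constraint $\int_\Tt m_j=1$. Since $V$ has a single maximum, Proposition \ref{prp: j>0} gives a unique solution for each $j>0$, and by Proposition \ref{prp: a+,-} we have $\alpha^-(j)\to\infty$, so for all large $j$ we are in Case \textit{ii}: $m_j$ is continuous, strictly positive, and sits on the lower branch, whence the crude but crucial a priori bound $m_j(x)\leq t_{min}=j^{2/3}$. Integrating the first equation over $\Tt$ and using $\int_\Tt m_j=1$ yields the key identity
\[
\Hh_j=\frac{j^2}{2}\int_\Tt\frac{dx}{m_j^2}+1+\int_\Tt V\,dx .
\]
Jensen's inequality gives $\int_\Tt m_j^{-1}\geq 1$, and Cauchy--Schwarz (recall $|\Tt|=1$) gives $\int_\Tt m_j^{-2}\geq\big(\int_\Tt m_j^{-1}\big)^2\geq 1$; hence $\Hh_j\geq \frac{j^2}{2}+1+\int_\Tt V\to\infty$, proving \textit{i}.

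For \textit{ii}, I would subtract the integrated identity from the pointwise equation to obtain the rescaled relation
\[
\frac{j^2}{2}\left(\frac{1}{m_j(x)^2}-\int_\Tt\frac{dy}{m_j^2}\right)=\int_\Tt V\,dy-V(x)+1-m_j(x).
\]
Because $\int_\Tt m_j=1$ and $m_j$ is continuous, the intermediate value theorem produces a point $x^*$ with $m_j(x^*)=1$; evaluating the relation there shows $\big|\int_\Tt m_j^{-2}-1\big|\leq 2\omega\, j^{-2}$, where $\omega=\max_\Tt V-\min_\Tt V$. Feeding this back into the rescaled relation and using $m_j\leq j^{2/3}$ gives
\[
\left\|\frac{1}{m_j^2}-1\right\|_{L^\infty(\Tt)}\leq\frac{2\omega}{j^2}+\frac{2}{j^2}\big(1+\omega+j^{2/3}\big)=O\!\left(j^{-4/3}\right)\to 0,
\]
so $m_j\to 1$ uniformly. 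The remaining limits then follow: setting $w_j=m_j^{-1}-1$ one has $\|w_j\|_{L^\infty}=O(j^{-4/3})$, and since $u_j(x)=j\big(\int_0^x w_j-x\int_0^1 w_j\big)$ we get $\|u_j\|_{L^\infty}\leq 2j\|w_j\|_{L^\infty}=O(j^{-1/3})\to 0$, while $p_j=j\int_\Tt m_j^{-1}\geq j\to\infty$.

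The main obstacle is the uniform control of $m_j$: although $\Hh_j$ blows up like $j^2/2$, one must show that $m_j$ nevertheless stabilizes at the constant $1$. The decisive observation is that the diverging part of $\Hh_j$ is carried entirely by the singular term $\tfrac{j^2}{2m_j^2}$, so after subtracting its spatial average the forcing on $m_j$ is of size $O(j^{-2})$ except for the contribution $\tfrac{2}{j^2}m_j$; the branch bound $m_j\leq j^{2/3}$ (valid precisely because we sit in Case \textit{ii} for large $j$) is what keeps this last term negligible and avoids any bootstrap loop. Once uniform convergence is secured, the rest is routine, the only point requiring care being that the $O(j^{-4/3})$ rate is fast enough to beat the factor $j$ appearing in the definitions of $u_j$ and $p_j$.
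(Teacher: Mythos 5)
Your proof is correct, but it follows a genuinely different route from the paper's. The paper obtains part \textit{i} from the crude critical bound $\Hh_j\geq \Hh_j^{cr}=\max V+\tfrac32 j^{2/3}$, and for part \textit{ii} it uses the branch inequality $j^2/m_j^2\geq m_j$ to sandwich $m_j$ between $j/\sqrt{2(\Hh_j-V)}$ and $\sqrt3\,j/\sqrt{2(\Hh_j-V)}$, deduces uniform two-sided bounds $\tfrac1{2\sqrt3}\leq m_j\leq 2\sqrt3$, and then passes to the limit through the identity $m_j=(1+2m_j^3/j^2)^{-1/2}\,j/\sqrt{2(\Hh_j-V)}$ together with $\int_\Tt m_j=1$; the convergence $u_j\to0$ is then extracted from an oscillation estimate on $j/m_j$ written as a difference of square roots. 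You instead integrate the algebraic equation over $\Tt$ to get the exact identity $\Hh_j=\tfrac{j^2}{2}\int_\Tt m_j^{-2}+1+\int_\Tt V$, which with Jensen and Cauchy--Schwarz yields the sharper bound $\Hh_j\geq \tfrac{j^2}{2}+1+\int_\Tt V$ (the paper only recovers $\Hh_j\sim j^2/2$ a posteriori in \eqref{eq: H_j_asymptotics}); you then subtract the averaged equation from the pointwise one, anchor $\int_\Tt m_j^{-2}$ at a mean-value point where $m_j=1$, and feed the result back to get the explicit rate $\|m_j^{-2}-1\|_{L^\infty}=O(j^{-4/3})$, with the same branch bound $m_j\leq j^{2/3}$ in Case \textit{ii} playing the decisive role in both arguments. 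What your approach buys is quantitative: the rate $O(j^{-4/3})$ is strong enough to beat the factor $j$ in $u_j$ and gives $\|u_j\|_{L^\infty}=O(j^{-1/3})$ directly, whereas the paper's argument is purely qualitative (its oscillation bound decays only like $\Hh_j^{-1/2}\sim j^{-1}$, which still suffices but gives no rate for $m_j\to1$ itself); the paper's sandwich, on the other hand, produces the uniform two-sided bounds on $m_j$ that it reuses elsewhere. Both arguments correctly restrict to $j\geq j_{upper}$ so that $m_j$ is continuous and lies on the lower branch, and your intermediate-value and Jensen steps are valid there.
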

        
        \begin{proof}
        i. According to \eqref{eq: Hboundbybelow}, we have that $\Hh_j \geq \frac{3j^{2/3}}{2}+\max \limits_{\Tt} V$. Thus,  $\lim\limits_{j \to \infty} \Hh_j=\infty$.
        
        ii. For $j\geq j_{upper}$, solutions of \eqref{eq: currentform_1}
        are given by \eqref{eq: sols_ii}.
        Hence, $m_j$ consists only of the $m^-$ branch. Thus, $m_j(x) \leq j^{2/3}$, which yields $\frac{j^2}{m_j(x)^2}\geq m_j(x)$. Therefore,
        \(
        \frac{j^2}{2m_j(x)^2}+m_j(x)\leq \frac{3j^2}{2m_j(x)^2}.
        \)
        Consequently, using this inequality in \eqref{eq: currentform_1}, we get 
        \begin{equation}\label{eq: 6}
        \frac{j}{\sqrt{2(\Hh_j-V(x))}} \leq m_j(x)\leq \frac{\sqrt{3}j}{\sqrt{2(\Hh_j-V(x))}}.
        \end{equation}
        Integrating the previous inequality and taking into account that $\int\limits_{\Tt}m_j(x)=1$, we get
        \begin{equation}\label{eq: 7}
        \int_{\Tt}\frac{j}{\sqrt{2(\Hh_j-V(x))}} dx \leq 1\leq \int_{\Tt}\frac{\sqrt{3}j}{\sqrt{2(\Hh_j-V(x))}}dx.
        \end{equation}
        Because $\Hh_j$ converges to $\infty,$ and,
        for every $x,y \in \Tt$, where $V$ is bounded, we have that
        \[\lim\limits_{j\to \infty}\frac{\sqrt{2(\Hh_j-V(y))}}{\sqrt{2(\Hh_j-V(x))}}=1.
        \]
        Hence, for large enough $j$, we have
        \begin{equation}\label{eq: 8}
                \sqrt{2(\Hh_j-V(x))} \leq 2 \sqrt{2(\Hh_j-V(y))},\quad x,y \in \Tt.
        \end{equation} 
        Let $\bar{x}$ be such that
        \begin{equation*}
                \frac{j}{\sqrt{2(\Hh_j-V(\bar{x}))}}=\int_{\Tt}\frac{j}{\sqrt{2(\Hh_j-V(x))}} dx.
        \end{equation*}
        Then, by \eqref{eq: 6}, \eqref{eq: 8}, and \eqref{eq: 7}, we get
        \begin{align*}
                m_j(x)\leq \frac{\sqrt{3}j}{\sqrt{2(\Hh_j-V(x))}}\leq \frac{2\sqrt{3}j}{\sqrt{2(\Hh_j-V(\bar{x}))}}\leq 2\sqrt{3}.
        \end{align*}
        Similarly, we have
        \begin{align*}
                m_j(x)\geq \frac{j}{\sqrt{2(\Hh_j-V(x))}}\geq \frac{j}{2\sqrt{2(\Hh_j-V(\bar{x}))}}\leq \frac{1}{2\sqrt{3}}.
        \end{align*}
        Furthermore, we have that
        \begin{equation}
        \label{lll3}
        \frac{j^2}{2m_j^2(x)}=\frac{\Hh_j-V(x)}{1+\frac{2m_j^3(x)}{j^2}}.\end{equation}
        Thus,
        \begin{equation}\label{eq: 2}
                m_j(x)=\frac{1}{\sqrt{1+\frac{2m_j^3(x)}{j^2}}}\frac{j}{\sqrt{2(\Hh_j-V(x))}}.
        \end{equation}             
        Finally, because $m_j$ is bounded and its integral is 1, we get from \eqref{eq: 2} that
        \begin{equation}\label{eq: 3}
                \lim\limits_{j \to \infty} \frac{j}{\sqrt{2(\Hh_j-V(x))}}=1
        \end{equation}
        for all $x \in \Tt$. The preceding limit implies that $\lim\limits_{j \to \infty}m_j(x)=1$ for all $x \in \Tt$.
        In fact, \eqref{eq: 3} gives precise asymptotics of $\Hh_j$, namely
        \begin{equation}\label{eq: H_j_asymptotics}
                \lim\limits_{j \to \infty} \frac{2\Hh_j}{j^2}=1.
        \end{equation}
        
        Now, we compute the limit of $u_j(x)$. We have that
        \((u_j)_x=\frac{j}{m_j(x)}-p_j,
        \)
        where $p_j=\int\limits_{\Tt}\frac{j}{m_j(y)}dy$. From \eqref{eq: currentform_1}, we have that
        \(\frac{j}{m_j(x)}=\sqrt{2(\Hh_j-V(x)-m_j(x))}\).
        Therefore, using \eqref{lll3},
        \begin{align*}&\left|\frac{j}{m_j(x)}-\frac{j}{m_j(y)}\right|=\left|\sqrt{2(\Hh_j-V(x)-m_j(x))}-\sqrt{2(\Hh_j-V(y)-m_j(y))}\right|\\
                &\leq\frac{|m_j(x)-m_j(y)|+|V(x)-V(y)|}{\sqrt{2(\Hh_j-\min\limits_{\Tt}V)}}
                \leq \frac{2\sqrt{3}+\text{osc}V}{\sqrt{2(\Hh_j-\min\limits_{\Tt}V)}}\to 0,
        \end{align*}
        as $j\to\infty$.
        Hence,
        \begin{align*}
                |(u_j)_x|&=\left|\frac{j}{m_j(x)}-p_j\right|=\left|\int\limits_{\Tt}\left(\frac{j}{m_j(x)}-\frac{j}{m_j(y)}\right)dy\right|\\
                &\leq \int\limits_{\Tt}\left|\frac{j}{m_j(x)}-\frac{j}{m_j(y)}\right|dy\to 0,
        \end{align*}
        when $j \to \infty$. Consequently, $\lim\limits_{j \to \infty}u_j(x)=\lim\limits_{j \to \infty} \int\limits_{0}^{x} (u_j)_x(y)dy=0$.
      \end{proof}
      Next, we study the behavior of solutions as $j \to 0$.
      \begin{proposition}\label{prp: j_to_0}
        We have that
        \begin{itemize}
                \item[i.] $\lim\limits_{j \to 0} \Hh_j=\max \left(\max\limits_{\Tt} V,1+\int\limits_{\Tt} V\right)=\Hh_0$;
                \item[ii.] if $1+\int\limits_{\Tt} V > \max \limits_{\Tt} V$, then
                \begin{equation*}
                        \lim\limits_{j \to 0} m_j(x)=1+\int\limits_{\Tt} V-V(x),\quad  \lim\limits_{j \to 0} u_j(x)=0,\quad \text{and}\  \lim\limits_{j\to 0}p_j=0
                \end{equation*}
                for all $x \in \Tt$;
                \item[iii.] if $1+\int\limits_{\Tt} V \leq  \max \limits_{\Tt} V$, then
                \begin{equation*}
                        \lim\limits_{j \to 0} m_j(x)=m_{d,1}(x),\ \  \ \lim\limits_{j \to 0} u_j(x)=u_{d,1}(x),\ \text{and}\ \lim\limits_{j\to 0}p_j=\int\limits_{0}^{d} \sqrt{2(\max \limits_{\Tt} V-V(x))}dx
                \end{equation*}
                for all $x \in \Tt$, where $m_{d,1}$ and $u_{d,1}$ are given by \eqref{eq: m_d1d2} and \eqref{eq: u_d1d2}.
        \end{itemize}
      \end{proposition}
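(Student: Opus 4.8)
The plan is to split according to the sign of $1+\int_\Tt V-\max_\Tt V$. By Proposition \ref{prp: a+,-}(iii) this sign equals the sign of $1-\lim_{j\to 0}\alpha^+(j)$, and since $\alpha^\pm$ are increasing (Proposition \ref{prp: a+,-}(i)) with $\lim_{j\to 0}\alpha^-(j)=0$, it determines which branch of Proposition \ref{prp: j>0} produces $(u_j,m_j,\Hh_j)$ for all small $j$. If $1+\int_\Tt V>\max_\Tt V$, then $\lim_{j\to 0}\alpha^+(j)=\max_\Tt V-\int_\Tt V<1$, so $\alpha^+(j)<1$ for small $j$ and we are in Case i (upper branch, $m_j=m^+_{\Hh_j}$). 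If $1+\int_\Tt V\leq\max_\Tt V$, then $\lim_{j\to 0}\alpha^+(j)\geq 1$; by strict monotonicity $\alpha^+(j)>1$ for $j>0$, while $\alpha^-(j)<1$ for small $j$, so we are in Case iii (the two-branch solution with $\Hh_j=\Hh_j^{cr}$).

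For part (i) in the first regime I would use two elementary bounds from the first equation in \eqref{eq: currentform_1}: integrating $m_j\leq \Hh_j-V$ gives $\Hh_j\geq 1+\int_\Tt V$, while on the upper branch $\frac{j^2}{2m_j^2}\leq\frac12 j^{2/3}$ gives $m_j\geq \Hh_j-V-\frac12 j^{2/3}$, hence $\Hh_j\leq 1+\int_\Tt V+\frac12 j^{2/3}$; these squeeze $\Hh_j\to 1+\int_\Tt V=\Hh_0$. In the second regime part (i) is immediate, since $\Hh_j=\Hh_j^{cr}=\max_\Tt V+\frac32 j^{2/3}\to\max_\Tt V=\Hh_0$ by \eqref{eq: Hboundbybelow}. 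For the remaining limits I would first record the uniform limits of the two branches: on the upper branch $0\leq(\Hh_j^{cr}-V)-m^+_j=\frac{j^2}{2(m^+_j)^2}\leq\frac12 j^{2/3}$ forces $m^+_j\to \Hh_0-V$ uniformly, and $0<m^-_j\leq j^{2/3}$ forces $m^-_j\to 0$ uniformly; solving the first equation for $j/m^\pm_j$ gives $\frac{j}{m^-_j}=\sqrt{2(\Hh_j^{cr}-V-m^-_j)}\to\sqrt{2(\Hh_0-V)}$ and $\frac{j}{m^+_j}\to 0$, both uniformly. In the first regime this already yields $m_j=m^+_j\to \Hh_0-V=1+\int_\Tt V-V$, and since $j/m_j\to 0$ uniformly, $p_j=\int_\Tt j/m_j\to 0$ and $u_j(x)=\int_0^x j/m_j-p_j x\to 0$.

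The substantive case is the intermediate regime (part iii), which is where I expect the main difficulty. Here $m_j=m^-_j\chi_{[0,d_j)}+m^+_j\chi_{[d_j,1)}$, and everything hinges on showing $d_j\to d$, where $d$ is the unique solution of $\int_d^1(\Hh_0-V)=1$. I would set $\psi_j(s)=\int_0^s m^-_j+\int_s^1 m^+_j$, observe that $d_j$ is its unique root of $\psi_j=1$, and note that the uniform branch limits give $\psi_j\to\psi_0$ uniformly, where $\psi_0(s)=\int_s^1(\Hh_0-V)$. Since $\Hh_0-V=\max_\Tt V-V>0$ off the maximum, $\psi_0$ is strictly decreasing on $(0,1)$, so its root $d$ is unique and $d_j\to d$ follows. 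With $d_j\to d$ in hand, I would pass to the limit using uniform convergence of $j/m^\pm_j$ and dominated convergence: $p_j=\int_0^{d_j} j/m^-_j+\int_{d_j}^1 j/m^+_j\to\int_0^d\sqrt{2(\Hh_0-V)}=p_0^{d,1}$, then $u_j(x)=\int_0^x j/m_j-p_j x\to u_{d,1}(x)$ for every $x$, and $m_j\to(\Hh_0-V)\chi_{[d,1)}=m_{d,1}$.

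The delicate points to watch, and what I regard as the heart of the proof, are the $0/0$ indeterminacy of $j/m^-_j$ on $[0,d_j)$ where $m^-_j\to 0$ and the interchange of the limit with the moving endpoint $d_j$. The first is resolved by reading $j/m^-_j$ off the equation as a uniformly convergent square root, noting there is no singularity at the maximum since both $m^-_j$ and $\Hh_0-V$ vanish there, so $j/m^-_j=\sqrt{2(\Hh_j^{cr}-V-m^-_j)}$ stays uniformly bounded and converges uniformly. The second is precisely where strict monotonicity of $\psi_0$ is essential: it upgrades the subsequential compactness of $d_j$ to genuine convergence $d_j\to d$. Finally, at the boundary $1+\int_\Tt V=\max_\Tt V$ one has $d=0$ and $m_{0,1}=\Hh_0-V$, so the two regimes match continuously, which serves as a consistency check on the whole argument.
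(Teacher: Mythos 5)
Your proposal is correct and follows essentially the same route as the paper: identify the regime via $\lim_{j\to 0}\alpha^+(j)=\max_\Tt V-\int_\Tt V$ (equivalently $j_{lower}>0$ versus $j_{lower}=0$), use $m_j^+\geq j^{2/3}\geq m_j^-$ to squeeze $\Hh_j$ and the branch limits, and pin down $d_j\to d$ through the uniqueness of the root of $\int_d^1(\max_\Tt V-V)\,dx=1$ (the paper phrases this via subsequential compactness, you via uniform convergence of $\psi_j$ to the strictly decreasing $\psi_0$ — the same mechanism). Your explicit treatment of the $0/0$ quotient $j/m_j^-$ via the square-root identity and the boundary-case consistency check are nice additions but not departures from the paper's argument.
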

      \begin{proof}
        i. There are two possible cases: $j_{lower}>0$ and $j_{lower}=0$.
        If $j_{lower}=0$, then $\alpha^+(j)>1$ for all $j>0$ and $\alpha^{-}(j)<1$ for small enough $j$. Hence, by the results in Section \ref{sec: j>0}, we have that $\Hh_j=\Hh_j^{cr}=\frac{3}{2}j^{2/3}+\max V$. Thus,  $\lim\limits_{j \to 0}\Hh_j=\max V$. On the other hand,  $j_{lower}=0$ means that $\lim\limits_{j\to 0}\alpha^+(j)\geq 1$. Consequently, by Proposition \eqref{prp: a+,-},  $\max\limits_{\Tt} V-\int \limits_{\Tt}V\geq 1$. Thus,  $\max \left(\max\limits_{\Tt} V,1+\int\limits_{\Tt} V\right)=\max\limits_{\Tt} V=\lim\limits_{j\to 0}\Hh_j$.
        
        If $j_{lower}>0$, then $\alpha^+(j)<1$ for $j<j_{lower}$ and solutions $(m_j,u_j,\Hh_j)$ are given by \eqref{eq: sols_i}. Hence,  $m_j(x)\geq j^{2/3}$ and
        \begin{equation}\label{eq: 1}
                0<\frac{j^2}{2m_j(x)^2}\leq \frac{j^{2/3}}{2}.
        \end{equation}
        Therefore,
        \begin{equation*}
                \lim\limits_{j\to 0}\Hh_j=\lim\limits_{j\to 0}\left(\int\limits_{\Tt} V+\int\limits_{\Tt}m_j+\int\limits_{\Tt}\frac{j^2}{2m_j(x)^2}\right)= \int\limits_{\Tt} V+ 1.
        \end{equation*}
        But $\max\limits_{\Tt} V-\int\limits_{\Tt}V=\lim\limits_{j\to 0} \alpha^+(j)<1$, so $\max \left(\max\limits_{\Tt} V,1+\int\limits_{\Tt} V\right)=1+\int\limits_{\Tt} V=\lim\limits_{j\to 0}\Hh_j$.
        
        ii. Since $\lim\limits_{j\to 0}\alpha^+(j)=\max V -\int V$, we have that the condition $1+\int\limits_{\Tt} V> \max \limits_{\Tt} V$ is equivalent to the condition $j_{lower}>0$. In this case, we have that $\lim\limits_{j \to 0}\Hh_j=1+\int \limits_{\Tt}V$. Therefore, from \eqref{eq: 1}, we have that
        \begin{equation*}
                \lim\limits_{j \to 0} m_j(x)=\lim\limits_{j \to 0} \left(\Hh_j-V(x)-\frac{j^2}{2m_j^2(x)}\right)=1+\int\limits_{\Tt} V-V(x).
        \end{equation*}
        Furthermore,
        \begin{equation*}
                \lim\limits_{j \to 0} u_j(x)=\lim\limits_{j \to 0}\int\limits_{0}^x \left(\frac{j}{m_j(y)}-\int\limits_{\Tt}\frac{j}{m_j(z)}dz\right)dy=0.
        \end{equation*}
        
        iii. The inequality $1+\int\limits_{\Tt} V \leq  \max \limits_{\Tt} V$ is equivalent to $j_{lower}=0$. Hence, for $0<j<j_{upper}$ solutions are given by \eqref{eq: sols_iii}.
        
        Because $0<m_j^-(x)\leq j^{2/3}$,  $\lim\limits_{j \to 0} m^-_j(x)=0$. Furthermore,   $m_j^+(x)\geq j^{2/3}$.  Thus,
        \[\lim\limits_{j\to 0}\frac{j^2}{2(m_j^+(x))^2}=0.
        \]
        Therefore,
        \[\lim\limits_{j\to 0}m_j^+(x)=\lim\limits_{j \to 0}\left(\Hh_j-V(x)-\frac{j^2}{2(m_j^+(x))^2}\right)=\max V-V(x).
        \]
        Suppose that the jump points, $d_j,$ of $m_j(x)$ (see \eqref{eq: sols_iii}) converge to some $d \in [0,1]$ through a subsequence. Then, through that subsequence
        \(\lim\limits_{j\to 0} m_j(x)=m_0^{d,1}(x),\)
        where $m_0^{d,1}$ is defined in \eqref{eq: m_d1d2}. Hence,
        \[1=\int\limits_{\Tt} m_0^{d,1}(x)dx=\int\limits_d^1 \left(\max V- V(x)\right)dx.
        \]
        Because $V$ has a single maximum, $d$ is defined uniquely by the previous equation. Hence, $\lim\limits_{j\to 0}d_j=d$ and $\lim\limits_{j\to 0} m_j(x)=m_0^{d,1}(x)$, globally (not only through some subsequence). Consequently,
        \[\lim\limits_{j \to 0} u_j(x)=u_0^{d,1}(x),\ \lim\limits_{j\to 0}p_j=p_0^{d,1}=\int\limits_{0}^{d} \sqrt{2(\max \limits_{\Tt} V-V(x))}dx,
        \]
        where $d$ is such that $\int\limits_d^1 \left(\max V- V(x)\right)=1$.
      \end{proof}
      From Proposition \ref{prp: j_to_0}, we see that we recover only part of the solutions for $j=0$ as limits of solutions for $j>0$. If we consider the solutions of \eqref{eq: currentform_1} for which $m$ takes negative values, we recover all solutions described in Section \ref{sec: j=0}.
      Indeed, the first equation in \eqref{eq: currentform_1} is a cubic equation in $m(x)$. Thus,  for every $x \in \Tt,$ there are three solutions: two positive and one negative. Because we are interested in the MFG interpretation of \eqref{eq: currentform_1}, we neglect solutions with negative $m$. However, we can construct solutions for \eqref{eq: currentform_1} without the constraint $m>0$. As $j$ converges to 0, the negative parts of these solutions converge to 0, and, in the limit, we obtain all non-negative solutions of \eqref{eq: currentform_j=0} given in Proposition \ref{prp: j=0}.

      \section{Properties of $\Hh_j$}\label{sec: j}
      
      In this section, we study various properties of the effective Hamiltonian, $\Hh_j$, as a function of $j$. In the following proposition, we collect several properties of $\Hh_j$. 
      \begin{proposition}\label{prp: H_j} We have that
        \begin{itemize}
                \item[i.] For every $j \in \Rr,$ there exists a unique number, $\Hh_j$, such that \eqref{main} has solutions with a current level $j$;
                \item[ii.] $\Hh_j$ is even; that is, $\Hh_j=\Hh_{-j}$;
                \item[iii.] $\Hh_j$ is continuous;
                \item[iv.] $\Hh_j$ increasing on $(0,\infty)$ and decreasing on $(-\infty,0)$;
                \item[v.] $\min\limits_{j \in \Rr}\Hh_j=\Hh_0=\max \left(\max\limits_{\Tt} V, 1+\int\limits_{\Tt}V(x)dx\right)$;
                \item[vi.] $\lim\limits_{|j|\to \infty} \frac{\Hh_j}{j^2/2}=1$.
        \end{itemize}
      \end{proposition}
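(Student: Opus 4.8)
The plan is to establish each of the six properties by combining the explicit structure of solutions from Proposition \ref{prp: j>0} with the monotonicity and limiting behavior of $\alpha^\pm$ from Proposition \ref{prp: a+,-}. For property \textit{i}, I would observe that in each of the three cases of Proposition \ref{prp: j>0}, the value $\Hh_j$ is pinned down uniquely: in Cases \textit{i} and \textit{ii} the monotone maps $\Hh\mapsto\int_\Tt m^\pm_\Hh$ force a single value solving the normalization $\int_\Tt m=1$, and in Case \textit{iii} we showed outright that $\Hh_j=\Hh^{cr}_j$ is the only admissible choice. Thus uniqueness of $\Hh_j$ for each $j>0$ is already contained in the earlier results, and the case $j=0$ is handled by Proposition \ref{prp: H_0} together with Proposition \ref{prp: j=0}. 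Property \textit{ii} is immediate from the symmetry of the equations under $j\mapsto -j$: the quantity $F_j(m)=j^2/2m^2-g(m)$ depends on $j$ only through $j^2$, so the whole system \eqref{eq: currentform_1} (except the sign of $p$, which does not affect $\Hh$) is invariant, giving $\Hh_j=\Hh_{-j}$. Because of this, I would restrict attention to $j\geq 0$ throughout.

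For property \textit{v}, I would use the lower bound \eqref{eq: Hboundbybelow}, namely $\Hh_j\geq \Hh^{cr}_j=\max_\Tt V+\tfrac32 j^{2/3}$, which is minimized at $j=0$, combined with the identification $\Hh_0=\max(\max_\Tt V,1+\int_\Tt V)$ from the $j=0$ analysis; matching these two lower bounds as $j\to 0$ via Proposition \ref{prp: j_to_0}\textit{i} confirms that $\Hh_0$ is the global minimum. Property \textit{vi} follows directly from the asymptotic expansion \eqref{eq: H_j_asymptotics} derived in Proposition \ref{prp: j_to_infty}, which states $\lim_{j\to\infty} 2\Hh_j/j^2=1$; invoking evenness extends this to $|j|\to\infty$. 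These two properties therefore require only citing previously proved limits.

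The substance lies in properties \textit{iii} and \textit{iv}, the continuity and monotonicity of $j\mapsto\Hh_j$ on $(0,\infty)$. Here I would argue regime by regime using the thresholds $j_{lower}$ and $j_{upper}$ from Proposition \ref{prp: regmodes}. On the smooth regimes $0<j\leq j_{lower}$ (Case \textit{i}) and $j\geq j_{upper}$ (Case \textit{ii}), $\Hh_j$ is determined implicitly by $\int_\Tt m^\pm_{\Hh_j}=1$; since $m^\pm_\Hh(x)$ depends smoothly and monotonically on both $\Hh$ and $j$ (monotonicity in $j$ was the content of Proposition \ref{prp: a+,-}\textit{i}), the implicit function theorem gives continuity, and differentiating the normalization constraint in $j$ yields the sign of $\Hh_j'$. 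On the intermediate regime $j_{lower}<j<j_{upper}$ (Case \textit{iii}), we have the closed form $\Hh_j=\Hh^{cr}_j=\tfrac32 j^{2/3}+\max_\Tt V$, which is manifestly continuous and strictly increasing. The main obstacle is continuity at the two transition currents $j_{lower}$ and $j_{upper}$, where the formula for $m_j$ switches branches. The key point is that at $j=j_{lower}$ we have $\alpha^+(j_{lower})=1$, so the normalizing $\Hh$ from Case \textit{i} coincides with $\Hh^{cr}_{j_{lower}}$; similarly at $j=j_{upper}$, $\alpha^-(j_{upper})=1$ forces the Case \textit{ii} value to match $\Hh^{cr}_{j_{upper}}$. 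Verifying these matching identities, and checking that the one-sided derivatives have the correct sign so that monotonicity is preserved across the junctions, is where the real care is needed; once the branches are shown to agree continuously at $j_{lower}$ and $j_{upper}$, piecing together the three monotone pieces gives global continuity and monotonicity on $(0,\infty)$, and evenness then handles $(-\infty,0)$.
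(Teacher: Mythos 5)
Your decomposition is essentially the paper's: items i, ii, v and vi are obtained by citing Propositions \ref{prp: j>0}, \ref{prp: j=0}, \ref{prp: j_to_0} and the limit \eqref{eq: H_j_asymptotics}, and the substance of iii--iv is handled regime by regime through $j_{lower}$ and $j_{upper}$, with the explicit formula $\Hh_j=\Hh^{cr}_j$ in the middle regime and the monotone branch maps $\Hh\mapsto\int_\Tt m^\pm_\Hh$ outside. The paper packages the outer-regime monotonicity as a comparison argument (freeze $\Hh$ at $\Hh_{j_0}$, show the resulting $\tilde m_j$ integrates to less than $1$ for $j<j_0$, and conclude $\Hh_j<\Hh_{j_0}$), which is the same computation as your implicit differentiation of the normalization constraint. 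Your explicit attention to the junctions ($\alpha^+(j_{lower})=1$ forcing the Case i value to coincide with $\Hh^{cr}_{j_{lower}}$, and similarly at $j_{upper}$) addresses a point the paper passes over silently, and it is indeed where continuity needs checking.

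One caution: the monotonicity you invoke in iv is not the one Proposition \ref{prp: a+,-}.i provides. That proposition concerns $m^\pm_j=m^\pm_{\Hh^{cr}_j}$, i.e., monotonicity along the moving critical level; what your implicit differentiation needs is the $j$-dependence of $m^\pm_\Hh(x)$ at \emph{fixed} $\Hh$, and there the $m^+$ branch goes the \emph{other} way: increasing $j$ raises the curve $t\mapsto j^2/2t^2+t$, so its solution on the increasing branch decreases. If you took ``$m^+_\Hh$ increasing in $j$'' literally, differentiating $\int_\Tt m^+_{\Hh_j}\,dx=1$ would give $\Hh_j'<0$ on $(0,j_{lower})$, which is the wrong sign. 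The correct signs ($\partial_j m^+_\Hh\leq 0$ together with $\partial_{\Hh}\int_\Tt m^+_\Hh>0$, and $\partial_j m^-_\Hh\geq 0$ together with $\partial_{\Hh}\int_\Tt m^-_\Hh<0$) both yield $\Hh_j'>0$, so your plan goes through once this is stated correctly.
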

      \begin{proof}\begin{itemize}
                \item[i.] This follows from Propositions \ref{prp: j>0} and \ref{prp: j=0}.
                \item[ii.] This follows from the fact that $j\mapsto j^2/2t+t$ is an even function for all $t>0$.
                \item[iii.]  Continuity of $\Hh_j$ follows from the continuity of the mapping $(j,t) \mapsto j^2/2t^2+t$ for $j,t>0$. 
                \item[iv.] Since $\Hh_j$ is even, it suffices to show that it is increasing on $(0,+\infty)$.
                First, we show that $\Hh_j$ is increasing on $(j_{upper},\infty)$. For that, we fix $j_0 >j_{upper}$. We have that $\Hh_{j_0}\geq \Hh_{j_0}^{cr}$. Hence, for any $j_{upper}<j<j_0$ we have that $\Hh_{j_0}\geq \Hh_{j_0}^{cr}>\Hh_{j}^{cr}$. Therefore, the function, $\tilde{m}_j$, determined by
                \begin{equation}\label{eq: mtilde_j}
                        \begin{cases}
                                \frac{j^2}{2(\tilde{m}_j(x))^2}+\tilde{m}_j(x)=\Hh_{j_0}-V(x),\\
                                \tilde{m}_j(x) \leq j^{2/3},
                        \end{cases}
                \end{equation}
                is well defined for all $j_{upper}<j<j_0$. Next, we show that the mapping \[j \mapsto \tilde{m}_j(x)
                \]
                is increasing in $(j_{upper},j_0)$ for all $x \in \Tt$. Indeed, fix $x\in \Tt$ and differentiate \eqref{eq: mtilde_j} in $j$ to obtain
                \[\frac{d\tilde{m}_j(x)}{dj}=-\frac{j\tilde{m}_j(x)}{j^2-\tilde{m}_j(x)^3}<0.
                \]
                Hence, $\tilde{m}_j(x)<m_{j_0}(x),\ x\in \Tt$. Accordingly,
                \[\int\limits_{\Tt} \tilde{m}_j(x)dx<\int\limits_{\Tt} m_{j_0}(x)dx=1.
                \]
                Finally, the previous inequality implies $\Hh_j<\Hh_{j_0}$.
                
                The monotonicity of $\Hh_j$ on $(0,j_{lower})$ (in the case $j_{lower}>0$) can be proven analogously.
                
                Next, for $j_{lower}<j<j_{upper}$, we have that $\Hh_j=\Hh_j^{cr}=\frac{3}{2}j^{2/3}+\max\limits_{\Tt} V$.  $\Hh_j$ is thus evidently monotone.
                \item[v.] This follows from the previous properties of $\Hh_j$ and Proposition \ref{prp: j_to_0}.
                \item[vi.] We have proven this in \eqref{eq: H_j_asymptotics}.
        \end{itemize}
      \end{proof}
      
      In Fig. \ref{pic: Hj} we plot $\Hh_j$ as a function of $j$ for $V(x)=\frac{1}{2}\sin(2\pi(x+\frac{1}{4}))$.

      \begin{figure}
        \includegraphics[width=50mm]{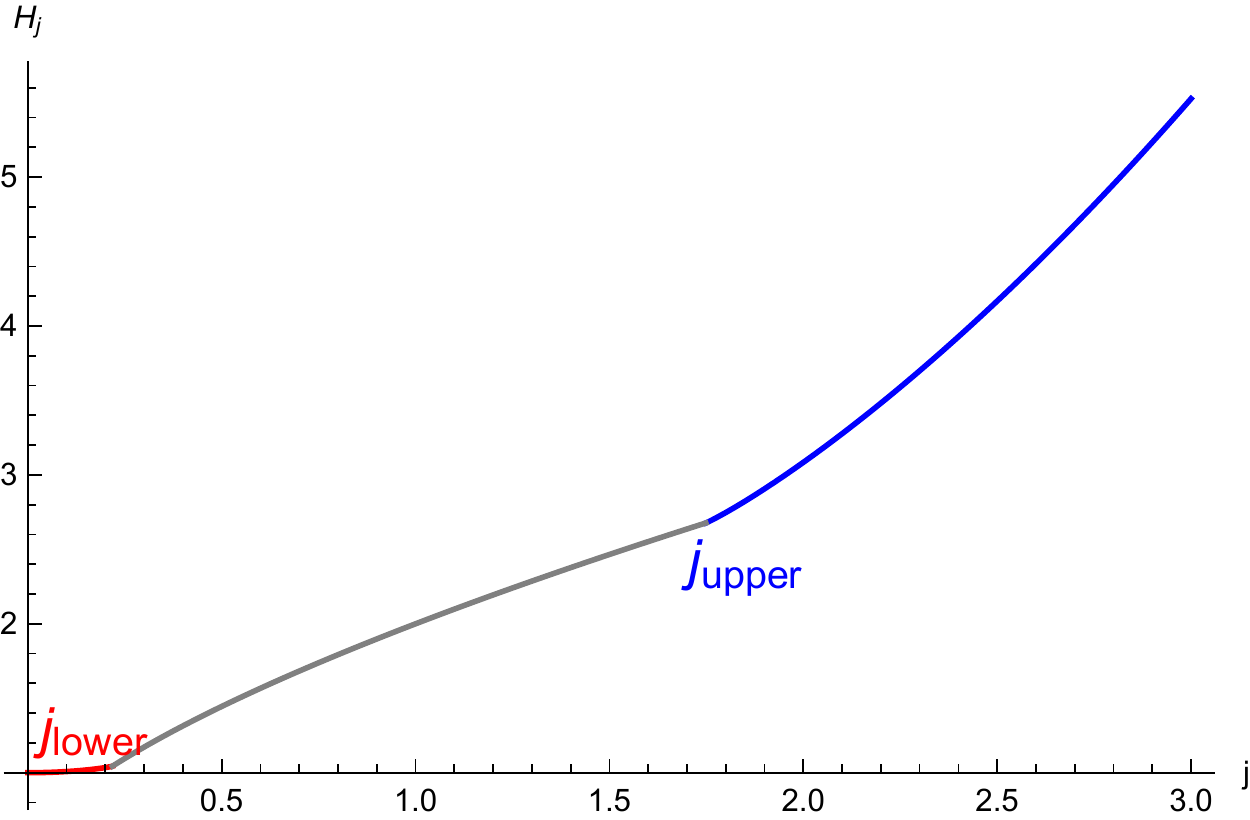}
        \caption{$\bar{H}_j$ for $V(x)=\frac 1 2 \sin(2 \pi (x + \frac 1 4))$.}
        \label{pic: Hj}
      \end{figure}

      \begin{figure}
        \includegraphics[width=50mm]{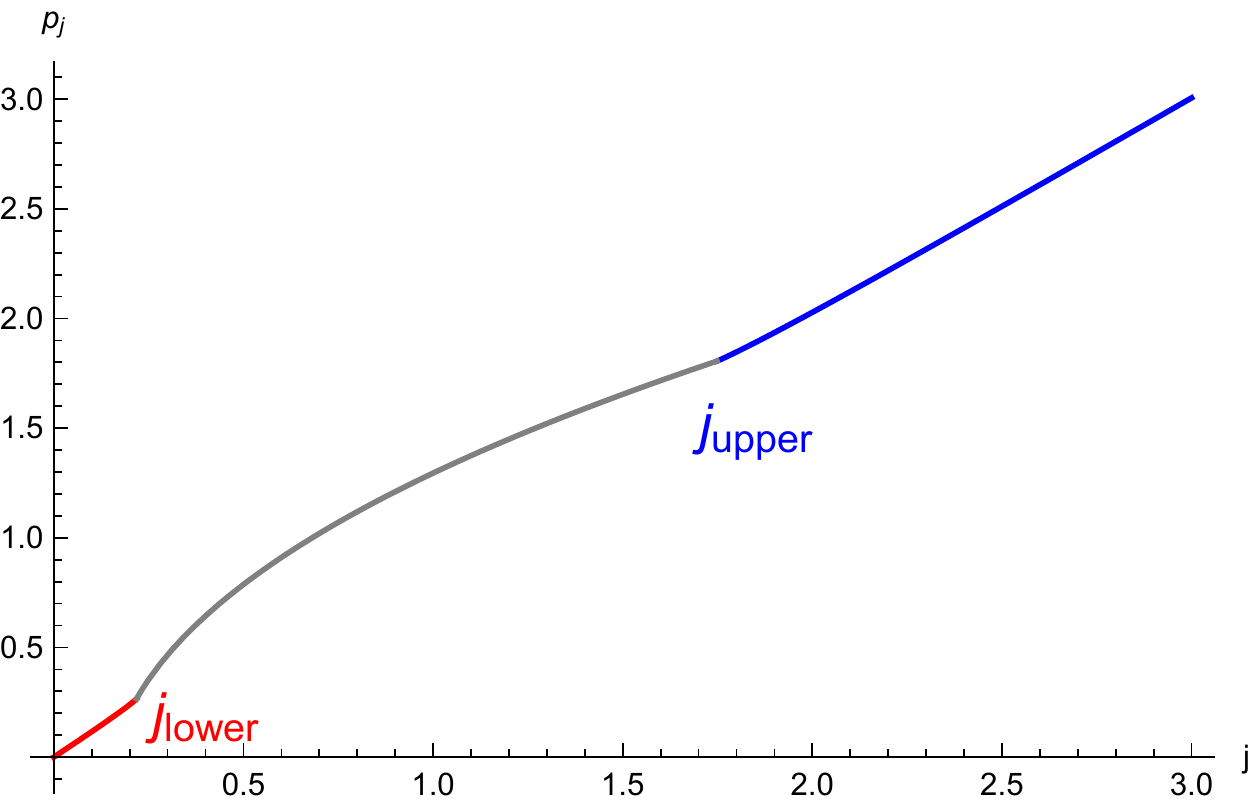}
        \caption{$p_j$ for $V(x)=\frac 1 2 \sin(2 \pi (x + \frac 1 4))$.}
        \label{pic: pj}
      \end{figure}
      
      \section{Analysis in terms of $p$}\label{sec: p}     
      
      Now, we analyze \eqref{main} in terms of the variable $p$. If $g(m)$\ is  increasing,
      for every $p\in \Rr,$ there exists a unique number, $\Hh(p)$, for which \eqref{main} has a solution. This solution is unique if $m>0$ (see, e.g.,  \cite{ll3}).        
      Here, we show that, if $g(m)$ is not increasing, there may be different values  of $\Hh(p)$ for which \eqref{main} has a regular solution. The uniqueness of $\Hh$ depends both on the monotonicity of $g$\ and on the properties of $V$. For example, if $g(m)=-m,$  $\Hh$ is uniquely determined by $p$ if and only if $V$ has a single maximum. Moreover, our prior characterization of regular solutions of \eqref{main} implies that, for $V$ with a single maximum point, \eqref{main} admits a unique regular solution for every $p\in\Rr$. 
      
      We start with an auxiliary lemma.
      \begin{lemma}\label{lma: p-j}
        Let $x=0$ be the single maximum point of $V$.
        \begin{itemize}
                \item[i.] For every $j\neq 0,$ there exists a unique number, $p_{j}$, such that \eqref{main} has a regular solution. Furthermore, the map $j\mapsto p_{j}$ is increasing on $(0,\infty)$ and $(-\infty,0)$.
                \item[ii.] If $1+\int\limits_{\Tt}V(x)dx \geq \max\limits_{\Tt} V$, then $p_0=0$ is the unique number for which  \eqref{main} has a regular solution with $j=0.$ Moreover,  $\lim\limits_{j\to 0}p_{j}=0$.
                \item[iii.] If $1+\int\limits_{\Tt}V(x)dx < \max\limits_{\Tt} V$, then
                \begin{equation*}
                        p_j>\int\limits_{0}^{d_1} \sqrt{2(\max \limits_{\Tt} V-V(x))}dx,\quad j>0
                \end{equation*}
                and
                \begin{equation*}
                        p_j<-\int\limits_{d_2}^{1} \sqrt{2(\max \limits_{\Tt} V-V(x))}dx,\quad j<0,
                \end{equation*}
                where $d_1,d_2 \in (0,1)$ are such that
                \[\int\limits_{0}^{d_1} (\max \limits_{\Tt} V-V(x))dx=\int\limits_{d_2}^{1} (\max \limits_{\Tt} V-V(x))dx=1.
                \]
                Consequently, \eqref{main} has a regular solution for $j=0$ if and only if
                \begin{equation}\label{eq: pbound}
                        -\int\limits_{d_2}^{1} \sqrt{2(\max \limits_{\Tt} V-V(x))}dx\leq p\leq \int\limits_{0}^{d_1} \sqrt{2(\max \limits_{\Tt} V-V(x))}dx.
                \end{equation}
        \end{itemize}
      \end{lemma}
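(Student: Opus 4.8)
The plan is to treat the three items in turn, using the explicit solution formulas of Proposition \ref{prp: j>0} and the asymptotics of Proposition \ref{prp: j_to_0}, and exploiting throughout the identity $p_j=\int_\Tt \frac{j}{m_j}\,dx$ together with the symmetry $m_{-j}=m_j$, which yields $p_{-j}=-p_j$. Because of this symmetry it suffices to argue for $j>0$ and then transfer every conclusion to $(-\infty,0)$: an odd function increasing on $(0,\infty)$ is automatically increasing on $(-\infty,0)$, and the uniqueness and the $j=0$ bounds reflect accordingly. For item i the existence and uniqueness of $p_j$ for each $j\neq 0$ is exactly Proposition \ref{prp: j>0} (plus symmetry), so the real content is the monotonicity of $j\mapsto p_j$ on $(0,\infty)$, which I would prove on each of the three subintervals $(0,j_{lower})$, $(j_{lower},j_{upper})$, $(j_{upper},\infty)$ furnished by Proposition \ref{prp: regmodes} and then glue using the continuity of $p_j$ (which follows from the continuous dependence of $m_j$ on $j$ along each branch and from the matching of the two branches at $\Hh_j=\Hh_j^{cr}$).

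On the two smooth regimes (the upper branch for $j<j_{lower}$, the lower branch for $j>j_{upper}$) $\Hh_j$ is only implicitly defined; here I differentiate $p_j$ in $j$, eliminate $d\Hh_j/dj$ by means of the mass constraint $\int_\Tt \partial_j m_j\,dx=0$, and reduce the sign of $dp_j/dj$ to an inequality of the shape $j^2\big(\int_\Tt m_j\,d\mu\big)^2\lessgtr \int_\Tt m_j^2\,d\mu\,\int_\Tt m_j^3\,d\mu$ for a suitable positive measure $d\mu$. This inequality follows from the Cauchy--Schwarz inequality combined with the branch bound $m_j<t_{min}=j^{2/3}$ (lower branch) or $m_j>j^{2/3}$ (upper branch), the bound supplying exactly the factor that tips Cauchy--Schwarz in the required direction. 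On the middle regime $\Hh_j=\Hh_j^{cr}=\tfrac32 j^{2/3}+\max_\Tt V$ is explicit, so I differentiate the two branches directly: the same implicit differentiation as in Proposition \ref{prp: a+,-} gives $\frac{\partial}{\partial j}\frac{j}{m^\pm_j(x)}=\frac{1}{m^\pm_j}\,\frac{1-j^{2/3}/m^\pm_j}{1-j^2/(m^\pm_j)^3}>0$, since on each branch numerator and denominator share the same sign, so both integrands in $p_j=\int_0^{d_j}\frac{j}{m^-_j}+\int_{d_j}^1\frac{j}{m^+_j}$ increase pointwise in $j$. Differentiating $p_j$ also produces the interface term $d_j'\big(\frac{j}{m^-_j(d_j)}-\frac{j}{m^+_j(d_j)}\big)$, which is positive because $m^-_j(d_j)<m^+_j(d_j)$ at the interior jump and because $d_j'>0$ (this last from differentiating $\int_\Tt m_j=1$ and using that both branches increase in $j$). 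Hence $p_j$ is strictly increasing on each piece, and by continuity on all of $(0,\infty)$. I expect this monotonicity to be the main obstacle, the delicate point being the Cauchy--Schwarz reduction in the two implicitly defined regimes.

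For item ii, the hypothesis $1+\int_\Tt V\ge \max_\Tt V$ places us in case i of Proposition \ref{prp: j=0}: the unique $j=0$ regular solution has $m_0=\Hh_0-V\ge0$ with $\Hh_0=1+\int_\Tt V$, which forces $u_x+p\equiv 0$ almost everywhere on $\{m_0>0\}$ and hence $p=0$ by periodicity; this is the uniqueness of $p_0=0$. The limit $\lim_{j\to 0}p_j=0$ is read off from Proposition \ref{prp: j_to_0}(ii) when the inequality is strict, and from (iii) in the borderline case $1+\int_\Tt V=\max_\Tt V$, where the limiting support fills the whole torus and the limit formula for $p_j$ collapses to $0$.

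For item iii, the hypothesis $1+\int_\Tt V<\max_\Tt V$ is equivalent to $j_{lower}=0$, so small positive currents lie in case iii of Proposition \ref{prp: j>0}. The stated bound on $p_j$ for $j>0$ then follows from item i together with Proposition \ref{prp: j_to_0}(iii): since $p_j$ is increasing, $p_j>\lim_{j\to 0^+}p_j$, and this limit equals the largest value of $p$ admitting a $j=0$ regular solution, namely the upper bound in the statement; the bound for $j<0$ is the symmetric statement. Finally, the characterization of admissible $p$ at $j=0$ is obtained by optimizing $p_0^{d_1,d_2}=\int_0^{d_1}\sqrt{2(\max_\Tt V-V)}\,dx-\int_{d_2}^1\sqrt{2(\max_\Tt V-V)}\,dx$ from Proposition \ref{prp: j=0}(ii) over the constraint $\int_{d_1}^{d_2}(\max_\Tt V-V)\,dx=1$: a short computation shows $p_0^{d_1,d_2}$ varies monotonically along the constraint curve, so its range is precisely the closed interval between the two extreme configurations in which the support is flush against one or the other endpoint of the torus, and these two configurations yield exactly the two bounds in the statement.
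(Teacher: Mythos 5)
Your proposal is correct, and for items ii and iii it fleshes out what the paper dispatches in one line (``these claims follow from the monotonicity of $j\mapsto p_j$ and Propositions \ref{prp: j=0} and \ref{prp: j_to_0}''). The genuine divergence is in the monotonicity proof for the two smooth regimes. The paper proves the strictly stronger \emph{pointwise} statement that $j\mapsto j/m_j(x)$ is increasing for every $x$: it obtains the lower bound $\Hh_j'\geq \min_\Tt m_{j_0}/j_0=\max_\Tt 1/n_{j_0}$ by a comparison argument (introducing the affine competitor $\tilde H_j=\Hh_{j_0}+(j-j_0)\min_\Tt m_{j_0}/j_0$ and the associated $\tilde m_j$, showing $\int\tilde m_j>1$ for $j$ slightly above $j_0$, hence $\Hh_j\geq\tilde H_j$), which makes the numerator of $dn_j/dj$ nonnegative at every point. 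You instead eliminate $\Hh_j'$ entirely via the mass constraint and prove only that the \emph{integral} $p_j$ increases, reducing the sign of $p_j'$ to $j^2\bigl(\int \frac{w}{m^2}\bigr)^2\geq \int w\int\frac{w}{m}$ (lower branch, $w=(j^2/m^3-1)^{-1}$) and its reverse-oriented analogue on the upper branch; I checked that Cauchy--Schwarz in the form $\int\frac{w}{m}\leq(\int w)^{1/2}(\int\frac{w}{m^2})^{1/2}$ combined with the branch bound $m^2\lessgtr j^{4/3}$ does close both inequalities, so your argument is sound and arguably more self-contained. What it does not buy you is the pointwise comparison $j_1/m_{j_1}(x)\geq j_2/m_{j_2}(x)$, which the paper reuses later in the remark on the convexity of $\Hh(p)$; if you adopt your route there, that remark would need a separate argument. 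Your treatment of the middle regime (explicit $\Hh_j^{cr}$, pointwise differentiation of both branches plus a positive interface term $d_j'\bigl(\frac{j}{m_j^-(d_j)}-\frac{j}{m_j^+(d_j)}\bigr)$) is essentially the paper's computation reorganized, and your identification of the $j\to 0^+$ limit of $p_j$ with the extremal configuration of $p_0^{d_1,d_2}$ along the constraint curve is the right way to make the paper's terse deduction of item iii precise.
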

      \begin{proof}
        i. According to Proposition \ref{prp: j>0},  for every $j>0,$ there exists a unique number, $p_j$, such that \eqref{main} has a regular solution with a current level $j$. Let $(u_j,m_j,\Hh_j)$ be the solution of \eqref{main} given by \eqref{eq: sols_i}, \eqref{eq: sols_ii} or \eqref{eq: sols_iii}. Because
        \[p_j=\int\limits_{\Tt}\frac{j}{m_j(y)}dy, 
        \]
        to prove that  $p_j$ is increasing it suffices to show that 
        $j\mapsto\frac{j}{m_j(x)}$ is increasing for all $x\in\Tt$.                  
        First, we prove the monotonicity for $j_{lower}<j<j_{upper}$. Let $n_j(x):=\frac{j}{m_j(x)}$. We have that
        \begin{equation}\label{eq: n_jeq}
                \begin{cases}
                        \frac{n_j^2(x)}{2}+\frac{j}{n_j(x)}=\Hh_j-V(x),\quad x\in \Tt;\\
                        n_j(x)=\frac{j}{m^{-}_{j}(x)}\chi_{[0,d_j)}+\frac{j}{m^{+}_{j}(x)}\chi_{[d_j,1)}.
                \end{cases}
        \end{equation}
        Because the maps $j\mapsto m_j^+(x)$ and $j\mapsto m_j^-(x)$ are increasing for all $x \in \Tt$,  the map $j\mapsto d_j$ is also increasing.
        Assume that  $j$ is such that  $d_j\neq x$. We differentiate in $j$ the first equation in \eqref{eq: n_jeq} and take into account that  $\Hh_j=\frac{3}{2}j^{2/3}+\max\limits_{\Tt}
        V$ for $j_{lower}<j<j_{upper}$ to get
        \[\frac{dn_j(x)}{dj}=\frac{\Hh'_j-\frac{1}{n_j(x)}}{n_j(x)-\frac{j}{n_j^2(x)}}=\frac{\frac{1}{j^{1/3}}-\frac{1}{n_j(x)}}{n_j(x)-\frac{j}{n_j^2(x)}}.
        \]
        Let $j^x$ be such that $x=d_{j^x}$. For $j>j^x,$ we have $d_j>x$. Thus,  $n_j(x)=j/m_j^-(x)>j^{1/3}$, which implies $\frac{dn_j(x)}{dj}>0$. Similarly, for $j<j^x,$ we have $d_j<x$. Therefore, $n_j(x)=j/m_j^+(x)<j^{1/3}$, which implies $\frac{dn_j(x)}{dj}>0$.
        
        Next, we analyze the behavior of $n_j$\ at $j^x$. For $j>j^x,$  $n_j(x)=\frac{j}{m_j^-(x)},$ and, for $j<j^x,$  $n_j(x)=\frac{j}{m_j^+(x)}$. Thus, $n_j(x)$ takes a positive jump, $\frac{j}{m_j^-(x)}-\frac{j}{m_j^+(x)}>0,$ at $j=j^x$.
        Therefore,  $j\mapsto n_j(x)$ has positive derivatives whenever $j\neq j^x$ and a positive jump at $j=j^x$. It is thus increasing for  $j_{lower}<j<j_{upper}$.
        
        Next, we show that $j\mapsto n_j(x)$ is increasing on $(j_{upper},\infty)$. As before, we have
        \[\frac{dn_j(x)}{dj}=\frac{\Hh'_j-\frac{1}{n_j(x)}}{n_j(x)-\frac{j}{n_j^2(x)}}.
        \]
        Because $m_j(x)<j^{2/3}$, we have $n_j(x)>j^{1/3}$. Therefore, if $\Hh'_j\geq 1/n_j(x),$ the map $j\mapsto n_j(x)$ is increasing.
        
        Fix $j_0$ and, for $j>j_0,$ consider $\tilde{H}_j:=\Hh_{j_0}+(j-j_0)\frac{\min\limits_{\Tt} m_{j_0}(x)}{j_0}$. Define
        \begin{equation*}
                \begin{cases}
                        \frac{j^2}{2\tilde{m}_j(x)^2}+\tilde{m}_j(x)=\tilde{H}_j-V(x),\quad x\in \Tt;\\
                        \tilde{m}_j(x)\leq j^{2/3}.
                \end{cases}
        \end{equation*}
        Note that $\tilde{H}_{j_0}=\Hh_{j_0}$ and $\tilde{m}_{j_0}=m_{j_0}$. Now, we compute the derivative of the map $j \to \tilde{m}_j(x)$ at $j=j_0$. Because $m_{j_0}<j_0^{2/3}$, we have
        \begin{equation*}
                \frac{d\tilde{m}_j(x)}{dj}\Bigg|_{j=j_0}=\frac{\tilde{H}'_j-\frac{j}{\tilde{m}_j^2}}{1-\frac{j^2}{\tilde{m}_j^3}}\Bigg|_{j=j_0}=\frac{\frac{\min\limits_{\Tt} m_{j_0}}{j_0}-\frac{j_0}{m_{j_0}^2}}{1-\frac{j_0^2}{m_{j_0}^3}}>0. 
        \end{equation*}
        Thus,                 \begin{equation*}
                \frac{d}{dj}\int\limits_{\Tt}\tilde{m}_j(x)dx \Bigg|_{j=j_0}=\int\limits_{\Tt}\frac{d\tilde{m}_j(x)}{dj}dx>0.
        \end{equation*}
        Hence, for small $j>j_0$ close to $j_0,$ we get
        \[\int\limits_{\Tt}\tilde{m}_j(x)dx>1. 
        \]
        Consequently, for those values of the current,  we have that $\Hh_j\geq \tilde{H}_j$. Hence,
        \[\Hh'_{j_0}\geq \tilde{H}'_{j_0}=\frac{\min\limits_{\Tt} m_{j_0}}{j_0}=\max \limits_{\Tt} \frac{1}{n_{j_0}},
        \]
        which completes the monotonicity proof for $j>j_{upper}$.
        The monotonicity for $j<j_{lower}$ is similar.
        
        ii. \& iii. These claims follow from the monotonicity of $j \mapsto p_j$ and Propositions \ref{prp: j=0} and \ref{prp: j_to_0}.
      \end{proof}
      
      In Fig. \ref{pic: pj}, we plot $p$ as a function of $j$ for $V(x)=\frac{1}{2}\sin(2\pi (x+\frac{1}{4}))$.
      
      \begin{proposition}
        Let $x=0$ be the only point of maximum of $V$. Then,
        \begin{itemize}
                \item[i.] for every $p \in \Rr,$ there exists a unique number, $\Hh(p),$ for which  \eqref{main} has a regular solution;
                \item[ii.] for every $p\in \Rr, $  \eqref{main} has a unique regular solution;
                \item[iii.] if $\max\limits_{\Tt}
                V> 1+\int\limits_{\Tt}V(x)dx$, $\Hh(p)$ is flat at the origin;
                \item[iv.] $\Hh(p)$ is increasing on $(0,\infty)$ and decreasing on $(-\infty,0)$. Thus 
                \[
                \min\limits_{p\in \Rr} \Hh(p)=\Hh(0)=\max \left(\max\limits_{\Tt} V, 1+\int\limits_{\Tt}V(x)dx\right);
                \]
                \item[v.] $\lim\limits_{|p| \to \infty} \frac{\Hh(p)}{p^2/2}=1$.
        \end{itemize}
      \end{proposition}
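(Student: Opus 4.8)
The plan is to construct $\Hh(p)$ by inverting the current-to-momentum map $j\mapsto p_j$ supplied by Lemma \ref{lma: p-j}, and then to transport through this inversion the properties of $\Hh_j$ established in Proposition \ref{prp: H_j}. The starting observation is that the first equation in \eqref{eq: currentform_1} depends on $j$ only through $j^2$, and $\Hh_{-j}=\Hh_j$ by Proposition \ref{prp: H_j}, ii; hence $m_{-j}=m_j$ and $p_{-j}=\int_\Tt(-j)/m_j\,dy=-p_j$, so $j\mapsto p_j$ is odd. By Lemma \ref{lma: p-j}, i, this map is strictly increasing on $(0,\infty)$ (and, by oddness, on $(-\infty,0)$), and it is continuous there because $\Hh_j$ is continuous (Proposition \ref{prp: H_j}, iii) and $m_j$, together with the jump location $d_j$ of Case iii in Proposition \ref{prp: j>0}, depends continuously on $j$. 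Finally, Proposition \ref{prp: j_to_infty} gives $p_j\to+\infty$ as $j\to+\infty$, so by oddness $p_j\to-\infty$ as $j\to-\infty$.

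I would then split according to the behaviour near $j=0$. If $1+\int_\Tt V\geq\max_\Tt V$, Lemma \ref{lma: p-j}, ii, gives $\lim_{j\to0^+}p_j=0=p_0$, so $j\mapsto p_j$ extends to a continuous strictly increasing bijection of $\Rr$ onto itself; for each $p$ there is then a unique $j(p)$ and, by the uniqueness in Propositions \ref{prp: j>0} and \ref{prp: j=0}, a unique regular solution. If instead $\max_\Tt V>1+\int_\Tt V$, then by Lemma \ref{lma: p-j}, iii, the image of $(0,\infty)$ is an open half-line $(p_*^+,\infty)$ whose left endpoint $p_*^+=\lim_{j\to0^+}p_j$ coincides (via Proposition \ref{prp: j_to_0}, iii) with the right endpoint of the interval \eqref{eq: pbound} of $p$-values admitting a $j=0$ regular solution; the image of $(-\infty,0)$ is the symmetric half-line $(-\infty,-p_*^+)$. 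Thus the three ranges tile $\Rr$, the closed gap $[-p_*^+,p_*^+]$ being filled exactly by $j=0$ solutions. Uniqueness of the regular solution for $|p|\leq p_*^+$ follows because, for a single-maximum $V$, the $j=0$ solutions of Proposition \ref{prp: j=0} have connected support $[d_1,d_2]$, and sliding this interval subject to \eqref{eq: int_of_H0} makes $p=p_0^{d_1,d_2}$ strictly monotone, hence injective. Setting $\Hh(p):=\Hh_{j(p)}$ proves i and ii.

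Statement iii is then immediate: on $[-p_*^+,p_*^+]$ one has $j(p)=0$, so $\Hh(p)\equiv\Hh_0$ and $\Hh$ is flat at the origin. For iv, note that $j(p)$ is nondecreasing in $p$ and that, by Proposition \ref{prp: H_j}, iv--v, $\Hh_j$ is even, increasing in $|j|$, with minimum $\Hh_0$ at $j=0$; composing shows that $\Hh(p)$ is increasing on $(0,\infty)$ and decreasing on $(-\infty,0)$ (constant on the flat part of iii and strictly increasing beyond it), and that it attains its minimum $\Hh(0)=\Hh_0=\max(\max_\Tt V,1+\int_\Tt V)$ since $j(0)=0$ in both cases.

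For the asymptotics v, the extra input is the large-current relation between $p$ and $j$: from Proposition \ref{prp: j_to_infty}, $m_j\to1$ uniformly, whence $p_j/j=\int_\Tt m_j(y)^{-1}\,dy\to\int_\Tt 1\,dy=1$, so $p_j\sim j$ and $j(p)\sim p$ as $|p|\to\infty$. Combining this with $\Hh_j/(j^2/2)\to1$ (Proposition \ref{prp: H_j}, vi, i.e. \eqref{eq: H_j_asymptotics}) gives
\[
\frac{\Hh(p)}{p^2/2}=\frac{\Hh_{j(p)}}{j(p)^2/2}\cdot\frac{j(p)^2/2}{p^2/2}\longrightarrow 1\cdot 1=1.
\]
The main obstacle is the gluing in the anti-monotone regime $\max_\Tt V>1+\int_\Tt V$: one must verify that the open $p$-ranges coming from $j\neq0$ and the closed $p$-interval coming from $j=0$ meet exactly at the common endpoints $\pm p_*^+$ and cover $\Rr$ without overlap, so that $j(p)$ is well defined and $\Hh(p)$ is continuous across the transition. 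Checking that $\lim_{j\to0^+}p_j$ equals the endpoint of \eqref{eq: pbound} is the crux of this matching.
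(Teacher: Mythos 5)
Your overall route is the same as the paper's: invert the current-to-momentum map $j\mapsto p_j$ of Lemma \ref{lma: p-j}, set $\Hh(p)=\Hh_{j(p)}$, and transport the properties of $\Hh_j$ from Propositions \ref{prp: H_j}, \ref{prp: j_to_0} and \ref{prp: j_to_infty}. The paper's own proof is just a terse citation of those results; you fill in the gluing at $j=0$ and the asymptotics $p_j\sim j$, which is useful detail and consistent with what the paper intends.

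There is, however, one step that fails as stated: the claim that $m_{-j}=m_j$ (hence $p_{-j}=-p_j$, so that $j\mapsto p_j$ is odd and the $j=0$ gap is the symmetric interval $[-p_*^+,p_*^+]$). The algebraic equation for $m$ indeed depends only on $j^2$, but the \emph{selection} of the branch pattern does not: regularity requires $u_x=j/m-p$ to have only negative jumps, which forces $m$ to have positive jumps when $j>0$ and negative jumps when $j<0$. In the discontinuous regime (Case iii of Proposition \ref{prp: j>0}) the solution for $j>0$ is $m^-$ followed by $m^+$, while for $j<0$ it is $m^+$ followed by $m^-$ with a different switching point; for a non-symmetric $V$ these are different functions and $p_{-j}\neq -p_j$. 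Consequently the interval of $p$-values covered by $j=0$ solutions is in general \emph{not} symmetric about the origin: its right endpoint is $\lim_{j\to0^+}p_j=\int_0^{d}\sqrt{2(\max V-V)}\,dx$ with $\int_d^1(\max V-V)=1$ (support pushed to the right), and its left endpoint is the analogous limit from $j\to0^-$ (support pushed to the left), and these need not have equal absolute value. Fortunately none of the conclusions i--v depend on this symmetry: monotonicity of $p_j$ on $(-\infty,0)$ is already supplied by Lemma \ref{lma: p-j}.i without oddness, $p_j\to-\infty$ as $j\to-\infty$ follows from the same argument as Proposition \ref{prp: j_to_infty} applied to $j<0$, the (asymmetric) gap still contains $0$ in its interior and is still tiled exactly by the $j=0$ solutions via your sliding-support argument, and iv--v go through unchanged. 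So the flaw is in one justification rather than in the architecture of the proof, but you should replace the oddness argument by a direct treatment of $j<0$ with the mirrored branch pattern.
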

      \begin{proof}
        i \& ii. From Lemma \ref{lma: p-j}, we have that for every $p,$ there exists a unique $j$ such that \eqref{main} has regular solutions. From Proposition \ref{prp: H_j}, we have that for every $j$ there exists a unique number, $\Hh$, such that \eqref{main} has a regular solution. Therefore, for every $p,$ the constant $\Hh$ is determined uniquely. Moreover, from Proposition \ref{prp: j>0}, we have that \eqref{main} has a unique regular solution for this constant.
        
        iii. From iii. in Lemma \ref{lma: p-j}, we have that if $p$ satisfies \eqref{eq: pbound}, then $\Hh(p)=\Hh_0$.
        
        iv. This follows from i in Lemma \ref{lma: p-j} and iv and  v in Proposition \ref{prp: H_j}.
        
        v. This follows from vi in Proposition \ref{prp: H_j}, ii in Proposition \ref{prp: j_to_infty}, and the formula $p_j=\int\limits_{\Tt}\frac{j}{m_j(y)}dy$.
        
      \end{proof}
      In Fig. \ref{pic: Hp}, we show $\Hh(p)$ for $V(x)=\frac{1}{2}\sin(2\pi(x+\frac{1}{4}))$.

      \begin{figure}
        \includegraphics[width=50mm]{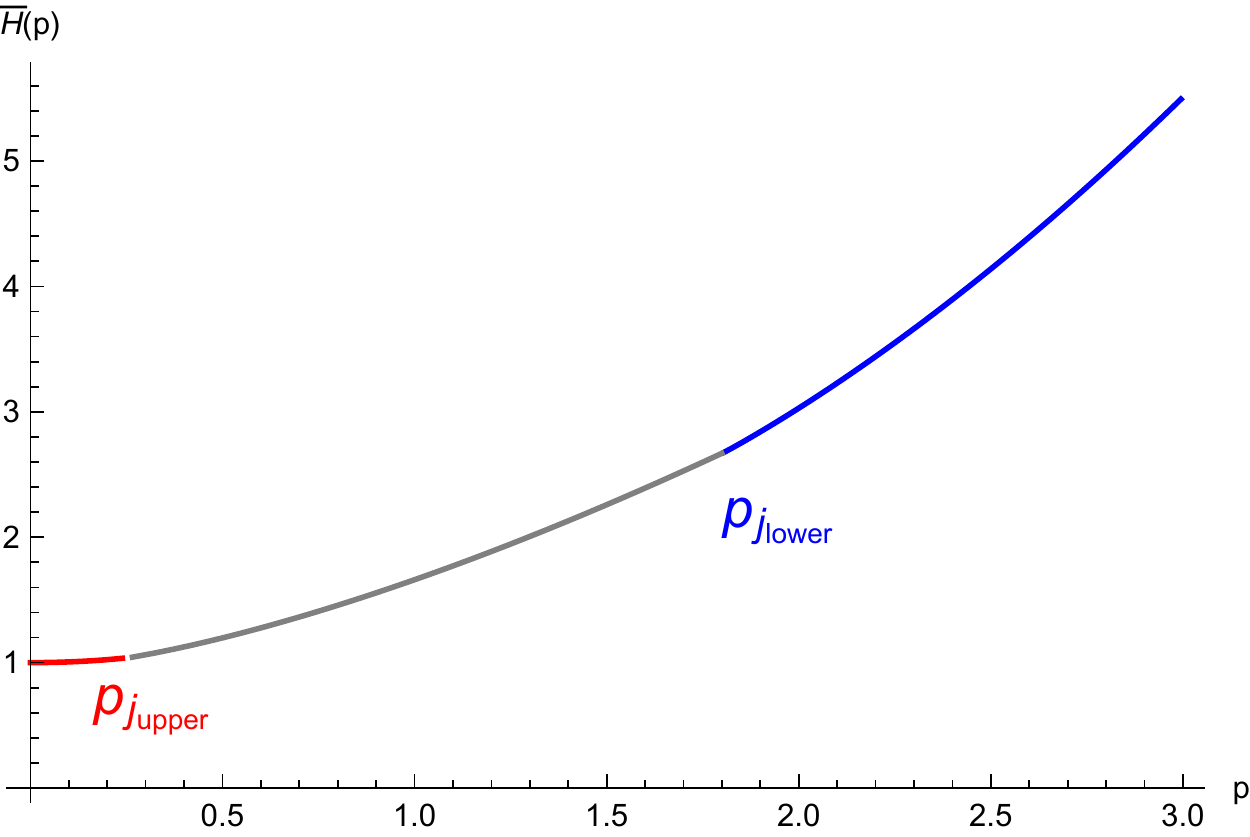}
        \caption{$\bar{H}(p)$ for $V(x)=\frac 1 2 \sin(2 \pi (x + \frac 1 4))$.}
        \label{pic: Hp}
      \end{figure}

      \begin{remark}
        We conjecture that, if $V$ has only one maximum point, $\Hh(p)$ is convex. Let $p_1<p_2$ and $(u_1,m_1,\Hh(p_1))$ and $\ (u_2,m_2,\Hh(p_2))$ solve \eqref{main} for $p=p_1$ and $p=p_2$, respectively. Consider the trajectories, $y_1,y_2,$ determined by        \begin{equation*}
                \dot{y}_i(t)=-(u_i)_x(y_i(t))-p_i,\quad y_i(0)=0,\quad i=1,2.
        \end{equation*}
        As in  weak Kolmogorov-Arnold-Moser (KAM) or classical KAM theory, we would like to show that
        \begin{equation}\label{eq: asymp_traj}
                \lim\limits_{t \to \infty} \frac{y_i(t)}{t}=\lim\limits_{t\to \infty} \dot{y}_i(t)=-D_p\Hh(p_i),\quad i=1,2.
        \end{equation}
        Furthermore, let $j_1$ and $j_2$ be the current values corresponding to $p_1$ and $p_2$. From the proof of i in Lemma \ref{lma: p-j}, we have that
        \begin{equation*}
                -(u_1)_x(y)-p_1=-\frac{j_1}{m_1(y)} \geq -\frac{j_2}{m_2(y)}=-(u_2)_x(y)-p_2.
        \end{equation*}
        Hence, if \eqref{eq: asymp_traj} holds, we get
        \[y_1(t)\geq y_2(t),\quad \dot{y}_1(t)\geq \dot{y}_2(t),\quad t>0,
        \]
        which implies $D_p\Hh(p_1) \leq D_p\Hh(p_2)$.  Thus, $\Hh(p)$ is convex.
      \end{remark}
      \begin{remark}
        If $V$ has more than one maximum point and is not constant, there exists a $p\in \Rr$ such that \eqref{main} has regular solutions for more than one value of $\Hh$. By Proposition \ref{prp: H_j}, we know that $\Hh$ is determined by the current level $j$. If $V$ has at least two maxima and is not constant, there exist multiple values for $p$ and solutions of \eqref{main} corresponding to a single value $j$ (see Proposition \ref{prp: V_multimax} and Remark \ref{rmrk: V_multimax_j=0}). Consequently, there exists a value of $p$ corresponding to different values of $j$ and, hence, to different values of $\Hh$.
      \end{remark}

\bibliographystyle{plain}
\bibliography{mfg}

\end{document}